\begin{document}
\newtheorem{theorem}{ Theorem}[section]
\newtheorem{proposition}[theorem]{ Proposition}
\newtheorem{definition}[theorem]{Definition}
\newtheorem{lemma}{ Lemma}[section]
\newtheorem{remark}{ Remark}[section]
\newtheorem{Cor}[theorem]{Corollary}

\begin{center}
    {\large \bf The optimal global estimate and boundary behavior for large solutions to the $k$-Hessian equation}
\vspace{0.5cm}\\{\sc Haitao Wan, Yongxiu, Shi} \\
{\small School of Mathematics and Information Science, Shandong
Technology and Business University\\ Yantai, 264005, Shandong, China
}
\end{center}

\renewcommand{\theequation}{\arabic{section}.\arabic{equation}}
\numberwithin{equation}{section} \footnote[0]{\hspace*{-7.4mm}
Corresponding author: H. Wan\\
 E-mail: wht200805@163.com (H. Wan);
syxiu0926@126.com (Y. Shi)\\}

\renewcommand{\baselinestretch}{1.3}
\large\normalsize
\begin{abstract}
\noindent In this paper, we consider the $k$-Hessian equation
 $S_{k}(D^{2}u)=b(x)f(u)\mbox{ in }\Omega,\,u=+\infty
 \mbox{ on }\partial\Omega$, where $\Omega$ is a
smooth, bounded, strictly convex domain in $\mathbb{R}^{N}$ with
$N\geq2$, $b\in \rm C^{\infty}(\Omega)$ is positive in $\Omega$ and
may be singular or vanish on the boundary, $f\in C^{\infty}(0,
\infty)\cap C[0, +\infty)$ (or $f\in C^{\infty}(\mathbb{R})$) is
positive and increasing on $[0, +\infty)$ (or $\mathbb{R}$)  and
satisfies the Keller-Osserman type condition. We first supply an
upper and lower solution method of classical $k$-convex large
solutions to the above equation, and then we studied the optimal
global estimate and boundary behavior of large solutions. In
particular, we investigate the asymptotic behavior of such solutions
when the parameters on $b$ tend to the corresponding critical values
and infinity.

\noindent{\bf MSC2010: }35J25, 35J60, 35B40, 35J67.

\noindent {\bf Keywords:} $k$-Hessian equation; Boundary blow-up
problem; The upper and lower solution method; The optimal global
estimate; Asymptotic behavior.
\end{abstract}
\section{Introduction}
In this paper, we investigate the optimal global estimate and
boundary behavior of classical $k$-convex solutions to the following
$k$-Hessian problem
\begin{equation}\label{M}
S_{k}(D^{2}u)=S_{k}(\lambda_{1},\cdot\cdot\cdot,\lambda_{N})
=b(x)f(u)\mbox{ in }\Omega,\,u=+\infty \mbox{ on }\partial\Omega,
\end{equation}
where $\Omega\subset \mathbb{R}^{N}$ $(N\geq 2)$ is a smooth,
bounded, strictly convex domain, $\lambda_{1},\cdot\cdot\cdot,
\lambda_{N}$ are the eigenvalues of the Hessian
\[
D^{2}u(x)=\bigg(\frac{\partial^{2} u(x)}{\partial x_{i}\partial
x_{j}}\bigg)_{N\times N}
\]
of $u\in C^{2}(\Omega)$, the last condition $u=+\infty$ on
$\partial\Omega$ means that $u(x)\rightarrow+\infty$ as
$d(x):=\mbox{dist}(x,
\partial\Omega)\rightarrow0$ and the solution is called large solution, blow-up solution or explosive solution. The $b$ and $f$
satisfy
\begin{description}
\item[$\mathbf{(b_{1})}$] $b\in C^{\infty}(\Omega)$ is positive in
$\Omega$;
\item[$\mathbf{(S_{1})}$] $f\in C^{\infty}(0, +\infty)$ with
$f(0)=0$ and $f$ is strictly increasing on $[0, +\infty)$ (or
$\mathbf{(S_{01})}$ $f\in C^{\infty}(\mathbb{R}),$ $f(t)>0,
\forall\,t\in\mathbb{R}$, and $f$ is strictly increasing on
$\mathbb{R}$).
\end{description}

We see from \cite{C-N-S} and \cite{Trudinger} that
\[
S_{k}(\lambda_{1},  \cdot\cdot\cdot, \lambda_{N})=\sum_{1\leq
i_{1}<\cdot\cdot\cdot<i_{k}\leq
N}\lambda_{i_{1}}\cdot\cdot\cdot\lambda_{i_{k}}.
\]
For $k\in \{1, \cdot\cdot\cdot, N\}$, let $\Gamma_{k}$ be the
component of $\{\lambda\in\mathbb{R}^{N}: S_{k}(\lambda)>0\}\subset
\mathbb{R}^{N}$ contain the positive cone \[\Gamma^{+}:=\{\lambda\in
\mathbb{R}^{N}: \lambda_{i}>0, i=1,\cdot\cdot\cdot, N\},\] i.e.,
$\Gamma_{k}$ is the convex cone with vertex at the origin given by
\[
\Gamma_{k}:=\{\lambda\in \mathbb{R}^{N}: S_{i}(\lambda)>0,
i=1,\cdot\cdot\cdot, k\}.
\]
It follows from \cite{C-N-S2} that \[ \Gamma^{+}=\Gamma_{N}\subset
\cdot\cdot\cdot\subset \Gamma_{1}. \] From Definition 1.1 of
\cite{Salani}, we see that for $k\in\{1,\cdot\cdot\cdot,N\}$, $u\in
C^{2}(\Omega)$ is (strictly) $k$-convex if
\[
H_{i}(D^{2}u)=S_{i}(\lambda_{1},\cdot\cdot\cdot,\lambda_{N})\,(>)\geq
0 \mbox{ in }\Omega \mbox{ for }i=1,\cdot\cdot\cdot,k.
\]
Let $\Omega\subset \mathbb{R}^{N}$ be an open set with
$C^{2}$-boundary, then from Definition 1.2 of \cite{Salani}, we see
that, $\Omega$ is (strictly) convex if
\[
S_{i}(\kappa_{1},\cdot\cdot\cdot,\kappa_{N-1})\,(>)\geq0 \mbox{ on }
\partial\Omega \mbox{ for }i=1,\cdot\cdot\cdot, N-1,
\]
where $\kappa_{i}(x)$ $(i=1,\cdot\cdot\cdot,N-1)$ are the principal
curvatures of $\partial\Omega$ at $x$.

When $k=1$, problem \eqref{M} is the following semilinear elliptic
problem
\begin{equation}\label{M1}
\Delta u=b(x)f(u)\mbox{ in }\Omega,\,u=+\infty \mbox{ on
}\partial\Omega.
\end{equation}
 For $b\equiv1$ in $\Omega$, $f(u) = \exp(u)$ and $N = 2$, Bieberbach \cite{Bieberbach} first studied
the existence, uniqueness and asymptotic behavior of classical
solutions to problem \eqref{M1} in a bounded domain
$\Omega\subseteq\mathbb{R}^{N}$ with $C^{2}$-boundary.  Then,
Rademacher \cite{Rademacher} using the idea of Bieberbach, showed
the results still hold for $N=3$. Later, Lazer and McKenna
\cite{HLAZER} extended the above results in bounded domain $\Omega
\subset \mathbb{R}^{N}$ $(N\geq 1)$ with a uniform outer sphere
condition. In particular, Keller \cite{HKeller} and Osserman
\cite{HOsserman} carried out a systematic research on problem
\eqref{M1} and supplied a necessary and sufficient condition for the
existence of solutions to the problem. 
For further insight on problem (1.2), please refer to \cite{AGQ},
\cite{Bandle1}, \cite{Lazer-McKenna1}-\cite{HLAZER},
\cite{Diaz}-\cite{Garcia}, \cite{Gilbarg},
\cite{Loewner}-\cite{Marcus2},
 \cite{Veron} and the
references therein.

When $k=N$, problem \eqref{M} is the Monge-Amp\`{e}re problem
\begin{equation}\label{M2}
\mbox{det}(D^{2}u)=b(x)f(u) \mbox{ in }\Omega,\,u=+\infty \mbox{ on
}\partial\Omega.
\end{equation}
Problem \eqref{M2} arises from a few geometric problem and was
considered by Cheng and Yau \cite{HCheng-Yau2}-\cite{Cheng-Yau2} for
$f(u)=\exp(Ku)$ in bounded convex domains and for $f(u)=\exp(2u)$ in
unbounded domains. When $b\in C^{\infty}(\bar{\Omega})$ is positive
on $\bar{\Omega}$ and $f(u)=u^{\gamma}$ $(\gamma>n)$ or
$f(u)=\exp(u)$, Lazer and McKenna \cite{HLazer-McKenna} showed the
existence, uniqueness and global estimate of strictly convex
$C^{\infty}$-solutions to problem \eqref{M2}. In particular, they
showed that
\begin{description}
\item[$\mathbf{(i)}$]if $f(u)=u^{\gamma}$ with $\gamma>N$, then there exist constants $c_{1}, c_{2}>0$ such that the
strictly convex solution $u$ satisfies
\[
c_{1}(d(x))^{-(N+1)/(\gamma-N)}\leq u(x)\leq
c_{2}(d(x))^{-(N+1)/(\gamma-N)},\,x\in\Omega;
\]
\item[$\mathbf{(ii)}$]if $f(u)=\exp(u)$, then there exists constant $c_{3}>0$ such that the strictly convex
solution $u$ satisfies
\[
|u(x)+(N+1)\ln d(x)|<c_{3}.
\]
\end{description}
Moreover, they also proved the nonexistence when $f(u)=u^{\gamma}$
with $\gamma\in (0, N)$. Matero \cite{Matero} treated the more
general case for bounded strictly convex domains, generalizing a
result of Keller \cite{HKeller} and Osserman \cite{HOsserman}. Then,
Salani \cite{Salani} further extended the results to some
$k$-Hessian equations. Guan and Jian \cite{Guan-Jian} extended the
results of Cheng and Yau \cite{HCheng-Yau2}-\cite{Cheng-Yau2}, in
which various existence and nonexistence results were shown for
rather general Monge-Amp\`{e}re equations with gradient terms in
bounded or unbounded (strictly) convex domains. In particular, they
also studied the global estimate of strictly convex large solutions
to the problem in bounded strictly convex  domains. Then, the
results are extended by Jian \cite{Prof.Jian} to the case of
$k$-Hessian equations. Let $b$ satisfy $\mathbf{(b_{1})}$, $f$
satisfy $\mathbf{(S_{1})}$ (or $\mathbf{(S_{01})}$) and the
following Keller-Osserman type condition
\begin{equation}\label{inf1}
\int_{t}^{+\infty}((N+1)F(s))^{-1/(N+1)}ds<+\infty,\,t>0,\,F(t)=\int_{0}^{t}f(s)ds.
\end{equation}
Mohammed \cite{Mohammedproc} showed the existence of strictly convex
solutions to problem \eqref{M} when the Dirichlet problem
\begin{equation}\label{cr}
\mbox{ det }D^{2}u(x)=b(x),\,x\in\Omega,\,u|_{\partial\Omega}=0
\end{equation}
has a strictly convex solution $u_{0}\in C^{\infty}(\Omega)\cap
C(\bar{\Omega})$. Furthermore, when $
\int_{t}^{+\infty}(f(s))^{-1/N}ds=+\infty,$ the author showed
problem \eqref{M2} has no classical convex solution.  In fact, for
the studying of problem \eqref{cr} can be traced bake to the works
of Cheng and Yau in \cite{HCheng-Yau1} and  Caffarelli, Nirenberg
and Spruck in \cite{C-N-S2}. Cheng and Yau \cite{HCheng-Yau1} proved
that problem \eqref{cr} possesses a strictly convex solution if
$0<b(x)<C(d(x))^{\delta-N-1}$ in $\Omega$ for some constants
$\delta>0$ and $C>0$. In Theorem 1.1 of \cite{C-N-S2}, Caffarelli et
al. showed that problem \eqref{cr} admits a convex solution if $b\in
C^{\infty}(\bar{\Omega})$.  Mohammed \cite{MohammedJMAA2} showed
that if $b(x)>C(d(x))^{-N-1}$ in $\Omega$ with $C>0$, then problem
\eqref{cr} has no strictly convex solution. When $f(u)=\exp(u)$ or
$f(u)=u^{\gamma},\,\gamma>N$, the weight $b$ satisfy
$\mathbf{(b_{1})}$ and grows like a negative power of $d(x)$ near
boundary, Yang and Chang \cite{Yangh} showed the existence,
uniqueness, nonexistence and global estimate of strictly convex
solutions to problem \eqref{M2}, and when $\Omega$ is a ball, they
obtained the exact boundary behavior of large solutions. Recently,
Zhang and Du \cite{XZhang1} showed that if $b$ satisfy
$\mathbf{(b_{1})}$ and $b\in L^{\infty}(\Omega)$, $f$ satisfy
$\mathbf{(S_{1})}$ (or $\mathbf{(S_{01})}$) and
\begin{equation}\label{inf2}
\int_{0}^{1}(F(s))^{-1/(k+1)}ds=+\infty,\, F(t)=\int_{0}^{t}f(s)ds,
\end{equation}
then problem \eqref{M2} has a strictly convex $C^{\infty}$-solution
if and only if \eqref{inf1} holds, moreover, if
 $b$ satisfy $\mathbf{(b_{1})}$ and problem
\eqref{cr} admit a strictly convex solution, $f$ satisfy
$\mathbf{(S_{1})}$  $(\mbox{or } \mathbf{S_{01}})$, \eqref{inf1} and
\eqref{inf2}, then  problem \eqref{M2} has a strictly convex
$C^{\infty}$-solution. For other related works on Monge-Amp\`{e}re
equations and more general $k$-Hessian equations, please refer to
\cite{Bao}, \cite{Cirstea-Trom}-\cite{Colesanti},
\cite{GuangBo}-\cite{Ivochkina2}, \cite{Prof.Jian2}-\cite{Ji1},
\cite{Lions}, \cite{MaShan},  \cite{Plis},
\cite{Salani}-\cite{TrudingerWang}, \cite{ZhangxmF},
\cite{DSZhang1}-\cite{CPAA}.

Very recently, Zhang and Feng \cite{ZhangFeng1}, studied the
existence and global estimate of $k$-convex solutions to problem
\eqref{M} by using the following structure condition
\[
\lim_{t\rightarrow+\infty} J(t)=E_{f}^{+\infty}
\]
and
\begin{equation}\label{inf3}
\lim_{t\rightarrow0^{+}}J(t)=E_{f}^{0},
\end{equation}
where $E_{f}^{0}\in (0, +\infty)$, $E_{f}^{+\infty}\in (0, +\infty]$
and
\begin{equation}\label{JJJ}
J(t):=((F(t))^{1/(k+1)})'\int_{t}^{+\infty}(F(s))^{-1/(k+1)}ds.
\end{equation}
When $f(t)=\exp(t)$, we see by a direct calculation that $f(t)$ does
not satisfy \eqref{inf3}. In fact, \eqref{inf3} is just right when
$f$ satisfies $\mathbf{(S_{1})}$,  but it is inappropriate when $f$
satisfies $\mathbf{(S_{01})}$. Most recently, Zhang \cite{JFAZ} by
introduce some new local structure conditions established the
optimal global estimate and boundary behavior of strictly convex
solutions to problem \eqref{M2} when $f$ satisfies
$\mathbf{(S_{1})}$ (or $\mathbf{(S_{01})}$).

The aim of this paper is to extend and improve the main results of
\cite{JFAZ} to the case $k\in\{1,\cdot\cdot\cdot,N\}$. We first
supply the upper and lower solution method of classical $k$-convex
solution to problem \eqref{M}, and then we investigate the optimal
global estimate and boundary behavior of $k$-convex solutions to
problem \eqref{M}. In particular, in Theorems 2.4-2.5 of
\cite{JFAZ}, the author by structuring piecewise functions
established the global estimate of strictly convex  solutions to
problem \eqref{M2}. In this paper, we find that the results can be
improved by using Lemma \ref{Lemma4.2}. Moreover, we further study
the optimal global estimate of $k$-convex solutions to problem
\eqref{M} when $f$ is regularly varying at positive infinity with
the critical index $k$.

To obtain Theorems \ref{thm2.3}-\ref{thm2.5} and Theorem
\ref{thm2.8}, we assume that $f$ satisfies the following conditions,
not necessarily simultaneously:
\begin{description}
\item[$\mathbf{(f_{1})}$]$\int_{t}^{+\infty}(f(s))^{-1/k}ds<+\infty,\,t>0
\mbox{ or }t\in\mathbb{R}$ and
\[
I(t):=((f(t))^{1/k})'\int_{t}^{+\infty}(f(s))^{-1/k}ds,\,t>0 \mbox{
or }t\in\mathbb{R};
\]
\item[$\mathbf{(f_{2})}$] there exists $C_{f}^{+\infty}\in (0,
+\infty)$ such that
\[
\lim_{t\rightarrow+\infty}I(t)=C_{f}^{+\infty};
\]
\item[$\mathbf{(f_{3})}$] $f$ satisfies $\mathbf{(S_{1})}$,
$\int_{0}^{1}(f(s))^{-1/k}ds=+\infty$, and there exists $C_{f}^{0}$
such that
\[
\lim_{t\rightarrow0^{+}}I(t)=C_{f}^{0};
\]
\item[$\mathbf{(f_{4})}$] $f$ satisfies $\mathbf{(S_{01})}$, there
exists $C_{f}^{-\infty}\in (0, +\infty)$ such that
\[
\lim_{t\rightarrow-\infty}I(t)=C_{f}^{-\infty}.
\]
\end{description}
By $\mathbf{(S_{01})}$, we see that
\[
\int_{-\infty}^{a}(f(s))^{-1/k}ds=+\infty.
\]
\begin{remark}
Some basic examples of $f$ in
$\mathbf{(f_{1})}$-$\mathbf{(f_{3})}$$(\mbox{or } \mathbf{(f_{4})})$
are
\item[$\mathbf{(i)}$]if $f(t)=t^{\gamma}$, $t\geq0$ with $\gamma>k$,
then $C_{f}^{+\infty}=C_{f}^{0}=\frac{\gamma}{\gamma-k}$;
\item[$\mathbf{(ii)}$]if $f(t)=\exp(t),\,t\in\mathbb{R}$, then
$C_{f}^{+\infty}=C_{f}^{-\infty}=1$;
\item[$\mathbf{(iii)}$] if $f(t)\sim (-t)^{\gamma}$
as $t\rightarrow-\infty$, where $\gamma<0$, then
$C_{f}^{-\infty}=\frac{\gamma}{\gamma-1}$.
\end{remark}

To obtain Theorem \ref{thm2.6}, we assume that $f$ satisfies the
following conditions, not necessarily simultaneously:
\begin{description}
\item[$\mathbf{(f_{5})}$] $\int_{t}^{+\infty}(F(s))^{-1/(k+1)}ds<+\infty,\,t>0 \mbox{ or }
t\in\mathbb{R}$, where
\[
F(t)=\int_{0}^{t}f(s)ds \mbox{ or }F(t)=\int_{-\infty}^{t}f(s)ds;
\]
\item[$\mathbf{(f_{6})}$] $\lim_{t\rightarrow+\infty} J(t)=+\infty$, where $J$ is given by \eqref{JJJ};
\item[$\mathbf{(f_{7})}$] $f$ satisfies $\mathbf{(S_{1})}$,
\eqref{inf2} and there exists $E_{f}^{0}\in (0, +\infty)$ such that
\eqref{inf3} holds;
\item[$\mathbf{(f_{8})}$] $f$ satisfies $\mathbf{(S_{01})}$, there
exists $E_{f}^{-\infty}\in (0, +\infty)$ such that
\[
\lim_{t\rightarrow-\infty}J(t)=E_{f}^{-\infty},\mbox{ where
}F(t)=\int_{-\infty}^{t}f(s)ds.
\]
\end{description}
By $\mathbf{(S_{01})}$, we see that
\[
\int_{-\infty}^{a}(F(s))^{-1/(k+1)}ds=+\infty \mbox{ for
}a\in\mathbb{R}.
\]
\begin{remark}
Some basic examples of $f$ in $\mathbf{(f_{5})}$, $\mathbf{(f_{7})}$
$(\mbox{or } \mathbf{(f_{8})})$ are
\item[$\mathbf{(i)}$] if $f(t)\sim t^{\gamma}$ as $t\rightarrow0^{+}$, where
$\gamma>k$, then $E_{f}^{0}=\frac{\gamma+1}{\gamma-k}$;
\item[$\mathbf{(ii)}$]if $f(t)\sim(-t)^{\gamma}$ as
$t\rightarrow-\infty$, where $\gamma<-1$, then
$E_{f}^{-\infty}=\frac{\gamma+1}{\gamma-k}$;
\item[$\mathbf{(iii)}$]if $f(t)\sim \exp(t)$ as
$t\rightarrow-\infty$, then $E_{f}^{-\infty}=1$.
\end{remark}
\section{Main Results}
\subsection{Optimal global behavior}
$\mathbf{(I)}$ Let $\psi$ be uniquely determined by
\begin{equation}\label{f22}
\int_{\psi(t)}^{+\infty}(f(s))^{-1/k}ds=t,\,t>0.
\end{equation}
We note that
\begin{description}
\item[$\mathbf{(i)}$] $\mathbf{(S_{1})}$ (or $\mathbf{S_{01}}$),
$\mathbf{(f_{1})}$-$\mathbf{(f_{2})}$ imply
\[
\psi(t)\rightarrow+\infty \mbox{ if and only if }t\rightarrow0^{+};
\]
\item[$\mathbf{(ii)}$] $\mathbf{(S_{1})}$, $\mathbf{(f_{1})}$ and
$\mathbf{(f_{3})}$ imply
\[
\psi(t)\rightarrow0^{+} \mbox{ if and only if }t\rightarrow+\infty;
\]
\item[$\mathbf{(iii)}$] $\mathbf{(S_{01})}$, $\mathbf{(f_{1})}$ and
$\mathbf{(f_{4})}$ imply
\[
\psi(t)\rightarrow-\infty \mbox{ if and only if
}t\rightarrow+\infty.
\]
\end{description}
$\mathbf{(II)}$ Since $\Omega$ is smooth, bounded, strictly convex
domain in $\mathbb{R}^{N}$, we see from \cite{HLazer-McKenna} that
there exists a function $\phi\in C^{\infty}(\bar{\Omega})$ with the
following properties:
\[
\phi(x)<0,\,x\in\Omega,\,\phi|_{\partial\Omega}=0,\,\nabla
\phi(x)\neq 0,\,x\in\partial\Omega
\]
and $\phi$ is positive definite in $\bar{\Omega}$. Let $v=-\phi$ and
assume that
\[
\max_{x\in\bar{\Omega}} v(x)<1.
\]
It is clear that $D^{2}v$ is negative definite in $\bar{\Omega}$. In
fact, for any positive constant  $\rho\in(0, 1]$, we can always take
$\phi$ such that
\[
\max_{x\in\bar{\Omega}} v(x)<\rho.
\]
$\mathbf{(III)}$ Let $\mathcal {L}$ denote the set of Karamata
functions defined on $(0, 1]$ by
\[
\tilde{L}(t)=c\exp\bigg(\int_{t}^{1}\frac{y(s)}{s}ds\bigg),\,c>0,\,y\in
C(0, 1] \mbox{ and }y(t)\rightarrow0 \mbox{ as }t\rightarrow0^{+}.
\]
We see from Proposition \ref{proposition4.6} that $\tilde{L}$ is
normalized slowly varying at zero.

Our results can be summarized as follows.
\begin{theorem}\label{thm2.1}
Let $f(t)=t^{\gamma},$ $t\geq0$ with $\gamma>k$, $b$ satisfy
$\mathbf{(b_{1})}$ and
\begin{description}
\item[$\mathbf{(b_{2})}$] there exist positive constants  $b_{1}<b_{2}$
and $\lambda>-1-k$ such that
\[
b_{1}(v(x))^{\lambda}\leq b(x)\leq b_{2}(v(x))^{\lambda},
\]
\end{description}
then problem \eqref{M} has a unique classical $k$-convex solution
$u_{\lambda}$ satisfying
\[
m_{0} (v(x))^{-\alpha}\leq u_{\lambda}(x)\leq M_{0}
(v(x))^{-\alpha},\,x\in\Omega,
\]
where
\begin{equation}\label{fa}
\alpha=\frac{k+1+\lambda}{\gamma-k},\,\,\,
m_{0}=\bigg(\frac{b_{2}}{\alpha^{k}c_{0}}\bigg)^{1/(k-\gamma)},\,\,\,M_{0}=\bigg(\frac{b_{1}}{\alpha^{k}C_{0}}\bigg)^{1/(k-\gamma)}
\end{equation}
with
\[
c_{0}=c_{0}(\lambda)=\min_{x\in\bar{\Omega}}\omega_{0}(x),\,\,\,C_{0}=C_{0}(\lambda)=\max_{x\in\bar{\Omega}}\omega_{0}(x),
\]
where
\begin{equation}\label{fb}
\omega_{0}(x)=v(x)\cdot(-1)^{k}S_{k}(D^{2}v(x))+(\alpha+1)\sum_{i=1}^{C_{N}^{k}}(-1)^{k}det(v_{x_{is}x_{ij}})(-\nabla
v_{i}(x))^{T}B(v_{i}(x))\nabla v_{i}(x),
\end{equation}
where $B(v_{i})$ denotes the inverse of the $i$-th principal
submatrix $(v_{x_{is}x_{ij}})$, $\mbox{det}(v_{x_{is}x_{ij}})$
denotes  the determinant of $(v_{x_{is}x_{ij}})$ and
\[
\nabla v_{i}=(v_{x_{i1}}, \cdot\cdot\cdot,
v_{x_{ik}})^{T},\,i=1,\cdot\cdot\cdot, C_{N}^{k} \mbox{ and }
C_{N}^{k}:=\frac{N!}{(N-k)!k!}.
\]
Moreover, we have
\begin{equation}\label{f8}
\lim_{\lambda\rightarrow-k-1}u_{\lambda}(x)=0,\,\,\,\lim_{\lambda\rightarrow+\infty}u_{\lambda}(x)=+\infty
\end{equation}
and
\begin{equation}\label{f9}
\bigg(\frac{b_{2}}{c_{1}}\bigg)^{1/(k-\gamma)}\leq\liminf_{\lambda\rightarrow-k-1}\frac{u_{\lambda}(x)}{\alpha^{k/(\gamma-k)}}\leq
\limsup_{\lambda\rightarrow
-k-1}\frac{u_{\lambda}(x)}{\alpha^{k/(\gamma-k)}}\leq
\bigg(\frac{b_{1}}{C_{1}}\bigg)^{1/(k-\gamma)}
\end{equation}
uniformly for $x\in \Omega_{1}$ which is an arbitrary compact subset
of $\Omega$, where
\begin{equation}\label{f1}
c_{1}=\min_{x\in\bar{\Omega}}\omega_{1}(x),\,\,\,C_{1}=\max_{x\in\bar{\Omega}}\omega_{1}(x)
\end{equation}
with
\begin{equation}\label{2J1}
\omega_{1}(x)=v(x)(-1)^{k}S_{k}(D^{2}v(x))+\sum_{i=1}^{C_{N}^{k}}(-1)^{k}det(v_{x_{is}x_{ij}}(x))(-\nabla
v_{i}(x))^{T}B(v_{i}(x))\nabla v_{i}(x).
\end{equation}
In particular, for fixed $x\in\Omega$, we have
\[
b_{2}^{1/(k-\gamma)}\leq\liminf_{\lambda\rightarrow
+\infty}\frac{u_{\lambda}(x)}{\alpha^{k/(\gamma-k)}c_{0}^{1/(\gamma-k)}(v(x))^{-\alpha}}\mbox{
and }
\limsup_{\lambda\rightarrow+\infty}\frac{u_{\lambda}(x)}{\alpha^{k/(\gamma-k)}C_{0}^{1/(\gamma-k)}(v(x))^{-\alpha}}\leq
b_{1}^{1/(k-\gamma)}.
\]
\end{theorem}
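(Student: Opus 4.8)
The plan is to construct explicit sub- and super-solutions of the form $\underline{u}(x)=m(v(x))^{-\alpha}$ and $\overline{u}(x)=M(v(x))^{-\alpha}$, apply the upper–lower solution method (the existence machinery established earlier in the paper), and then establish uniqueness and the various limit assertions. First I would compute $S_k(D^2 w)$ for a function of the separable form $w=\varphi(v(x))$. Writing $\varphi(s)=s^{-\alpha}$, one has $D^2 w = \varphi'(v)D^2 v + \varphi''(v)\nabla v\otimes\nabla v$. Because $\varphi''(v)\nabla v\otimes\nabla v$ is a rank-one perturbation, the quantity $S_k(D^2 w)$ expands, via the Cauchy–Binet / matrix-determinant-type identity underlying formula \eqref{fb}, into $(\varphi'(v))^k\,(-1)^k S_k(D^2 v)$ plus a term carrying one factor of $\varphi''(v)(\varphi'(v))^{k-1}$ contracted against the principal submatrices of $D^2 v$ — this is precisely how $\omega_0$ in \eqref{fb} arises. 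The sign bookkeeping uses that $D^2 v$ is negative definite so $(-1)^k S_k(D^2 v)>0$, that $\varphi'<0$, and that $\varphi'' >0$; after collecting powers one finds $(-1)^k S_k(D^2 w) = \alpha^{k} (v)^{-k\alpha - k - \lambda'}\,\omega_0(x)\cdot(\text{const})$ with the exponent chosen to match $b(x)f(w)$. Here the choice $\alpha=(k+1+\lambda)/(\gamma-k)$ is exactly what makes the power of $v$ on both sides agree, using $f(w)=w^\gamma=(v)^{-\alpha\gamma}$ and $b(x)\asymp (v)^\lambda$.

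Next I would pin down the constants. With $\alpha$ fixed as above, the super-solution inequality $S_k(D^2\overline u)\le b(x)f(\overline u)$ reduces, after cancelling the common power of $v$, to $M^{k}\alpha^k\,\omega_0(x)\le b_1 M^{\gamma}$ (using the lower bound $b\ge b_1 v^\lambda$ for the super-solution and $b\le b_2 v^\lambda$ for the sub-solution), i.e. to $M^{k-\gamma}\ge \alpha^k \omega_0(x)/b_1$ for all $x$; since $k-\gamma<0$ this is $M^{\gamma - k}\le b_1/(\alpha^k C_0)$ with $C_0=\max\omega_0$, giving $M_0=(b_1/(\alpha^k C_0))^{1/(k-\gamma)}$ as in \eqref{fa}; the sub-solution constant $m_0$ is obtained symmetrically with $b_2$ and $c_0=\min\omega_0$. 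One must also check $\underline u\to\infty$ on $\partial\Omega$ (immediate since $v\to 0$ and $\alpha>0$, the latter because $\lambda>-1-k$) and that both functions are $k$-convex — again from the $D^2 v$ negative-definite hypothesis together with the sign of $\varphi',\varphi''$. Existence of a classical $k$-convex solution $u_\lambda$ with $m_0 v^{-\alpha}\le u_\lambda\le M_0 v^{-\alpha}$ then follows from the upper–lower solution theorem of the paper. Uniqueness I would get from a standard comparison argument: if $u_1,u_2$ are two solutions, sup-ratio/translation tricks (e.g. comparing $u_1$ with $(1+\varepsilon)u_2$ near the boundary using the blow-up rate, then letting $\varepsilon\to0$) force $u_1\equiv u_2$, exploiting monotonicity of $f$ and the ellipticity of $S_k$ on the $k$-convex cone.

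For the asymptotic statements \eqref{f8}–\eqref{f9}, the key observation is that the exponent $\alpha=\alpha(\lambda)\to 0$ as $\lambda\to -1-k$ and $\alpha(\lambda)\to+\infty$ as $\lambda\to+\infty$. When $\lambda\to -1-k$, $(v(x))^{-\alpha}\to 1$ pointwise (indeed uniformly on compact subsets of $\Omega$, and even on $\bar\Omega$), while the prefactors $m_0,M_0$ — which behave like $(\text{const})\cdot\alpha^{k/(\gamma-k)}$ times bounded quantities — tend to $0$ because $\alpha^{k/(\gamma-k)}\to 0$ (as $k/(\gamma-k)>0$); this yields $u_\lambda\to 0$ and, after dividing by $\alpha^{k/(\gamma-k)}$, the refined two-sided bound \eqref{f9}, where in the limit $\lambda\to -1-k$ one has $\alpha+1\to 1$ so the weight $\omega_0$ degenerates to $\omega_1$ (drop the $(\alpha+1)$ factor), explaining why $c_1,C_1$ replace $c_0,C_0$. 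When $\lambda\to+\infty$, $\alpha\to\infty$ forces $u_\lambda\ge m_0 v^{-\alpha}\to\infty$ at every interior point (since $0<v(x)<1$ there, $v^{-\alpha}\to\infty$), and dividing by $\alpha^{k/(\gamma-k)}c_0^{1/(\gamma-k)}(v(x))^{-\alpha}$ isolates the constant $b_2^{1/(k-\gamma)}$ from below and $b_1^{1/(k-\gamma)}$ from above. The main obstacle I anticipate is the first step: verifying the exact algebraic identity \eqref{fb}–\eqref{fa} for $(-1)^k S_k(D^2(\varphi\circ v))$, i.e. correctly expanding $S_k$ of a rank-one perturbation in terms of the principal-submatrix determinants and their inverses, and tracking all the signs so that $\omega_0$ comes out positive on $\bar\Omega$; everything after that is comparison-principle bookkeeping and elementary limits in $\lambda$.
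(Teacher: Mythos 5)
Your proposal follows essentially the same route as the paper: construct the explicit sub/super-solutions $m_0 v^{-\alpha}$ and $M_0 v^{-\alpha}$, apply Lemma \ref{lemma2.3} (the $S_k$-composition identity) to reduce to the weight $\omega_0$, choose $\alpha$ to balance the powers of $v$, pin down $m_0,M_0$ via $c_0,C_0$, and invoke the upper–lower solution machinery of Lemma \ref{Lemma4.1}; the limit statements are handled by the same elementary $\alpha\to 0$ / $\alpha\to\infty$ bookkeeping, with Lemma \ref{Lemma4.2} (which you implicitly use) supplying continuity of $\lambda\mapsto c_0(\lambda),C_0(\lambda)$ so that $c_0\to c_1$, $C_0\to C_1$ as $\alpha\to 0$. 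Two details deserve tightening: (i) you flag but do not establish $\omega_0>0$ on $\bar\Omega$ — the paper proves this by noting $D^2 v$ (hence each principal $k\times k$ submatrix) is negative definite, so $(-1)^k\det(v_{x_{is}x_{ij}})>0$ and $(-\nabla v_i)^T B(v_i)\nabla v_i\ge e_1\|\nabla v_i\|^2\ge 0$, combined with Hopf's lemma near $\partial\Omega$ and $v>0$ in the interior; (ii) for uniqueness the paper simply cites Salani's Theorem 4.2 rather than running the $(1+\varepsilon)$-scaling comparison you sketch (which would work here because $f(t)=t^\gamma$ is homogeneous with $\gamma>k$, but needs care at the blow-up boundary). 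Your intermediate supersolution inequality has the senses of $\ge$/$\le$ momentarily flipped, though the final $M_0=(b_1/(\alpha^k C_0))^{1/(k-\gamma)}$ is correct; re-derive that line carefully.
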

\begin{theorem}\label{thm2.2}
Let $f(t)=\exp(t)$, $t\in\mathbb{R}$, $b$ satisfy
$\mathbf{(b_{1})}$-$\mathbf{(b_{2})}$, then problem \eqref{M} has a
unique classical $k$-convex solution $u_{\lambda}$ satisfying
\[
m_{1}-(\lambda+k+1)\ln v(x)\leq u_{\lambda}(x)\leq
M_{1}-(\lambda+k+1)\ln v(x),\,x\in\Omega,
\]
where
\begin{equation}\label{f13}
m_{1}=\ln c_{1}+k\ln(\lambda+k+1)-\ln b_{2},\,\,\,M_{1}=\ln
C_{1}+k\ln (\lambda+k+1)-\ln b_{1}
\end{equation}
and $c_{1}$, $C_{1}$ are given by \eqref{f1}. Moreover, we have
\begin{equation}\label{f14}
\lim_{\lambda\rightarrow-k-1}u_{\lambda}(x)=-\infty,\,\,\,
\lim_{\lambda\rightarrow+\infty}u_{\lambda}(x)=+\infty
\end{equation}
and
\begin{equation}\label{f15}
\lim_{\lambda\rightarrow-k-1}\frac{u_{\lambda}(x)}{\ln
(k+1+\lambda)}=k,\,\,\,\lim_{\lambda\rightarrow+\infty}\frac{u_{\lambda}(x)}{(k+1+\lambda)\ln
v(x)}=-1
\end{equation}
 uniformly for $x\in \Omega_{1}$ which is an arbitrary compact
subset of $\Omega$. 
\end{theorem}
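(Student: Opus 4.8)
The plan is to run the argument of Theorem~\ref{thm2.1} with the power ansatz $m_{0}v^{-\alpha}$ replaced by the logarithmic ansatz $C-\beta\ln v$, where $\beta:=\lambda+k+1>0$ (positive thanks to $\mathbf{(b_{2})}$), and then to read off \eqref{f14}--\eqref{f15} from the resulting two-sided estimate.

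\emph{Step 1: the Hessian computation.} For $\bar{u}(x)=C-\beta\ln v(x)$ one has
\[
D^{2}\bar{u}=-\frac{\beta}{v}D^{2}v+\frac{\beta}{v^{2}}\nabla v\otimes\nabla v ,
\]
a sum of the positive definite matrix $-\tfrac{\beta}{v}D^{2}v$ (recall $D^{2}v$ is negative definite on $\bar{\Omega}$) and a positive semidefinite rank-one matrix; hence $\bar{u}$ is strictly convex, in particular $k$-convex. Expanding $S_{k}$ over the $C_{N}^{k}$ principal $k\times k$ submatrices, using that adding a rank-one matrix $e\otimes e$ to a $k\times k$ matrix $A$ changes $\det$ by $e^{T}\,\mathrm{adj}(A)\,e$, and the homogeneity $S_{k}(cA)=c^{k}S_{k}(A)$, one obtains --- exactly as in the derivation of $\omega_{0},\omega_{1}$ in Theorem~\ref{thm2.1}, the coefficient being $1$ in place of $\alpha+1$ since the logarithm, unlike a power, produces no extra multiplicative constant ---
\[
S_{k}(D^{2}\bar{u})=\frac{\beta^{k}}{v(x)^{k+1}}\,\omega_{1}(x),\qquad x\in\Omega ,
\]
with $\omega_{1}$ as in \eqref{2J1}. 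I would also record here that $\omega_{1}>0$ on $\bar{\Omega}$: each summand of the rank-one part is $\ge0$, while $v\,(-1)^{k}S_{k}(D^{2}v)>0$ at interior points and at least one rank-one summand is $>0$ at boundary points (where $\nabla v\ne 0$); consequently $c_{1},C_{1}\in(0,\infty)$ and $m_{1},M_{1}$ in \eqref{f13} are well defined.

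\emph{Step 2: sub- and supersolutions, existence, uniqueness.} Writing $b(x)f(\bar{u})=\big(b(x)v(x)^{-\lambda}\big)e^{C}\,v(x)^{-(k+1)}$ and using $\mathbf{(b_{2})}$ (so $b_{1}\le b(x)v(x)^{-\lambda}\le b_{2}$), I would check that $\underline{u}:=m_{1}-\beta\ln v$ is a $k$-convex subsolution and $\bar{u}:=M_{1}-\beta\ln v$ a $k$-convex supersolution of \eqref{M}: after cancelling $\beta^{k}v^{-(k+1)}$, the inequality $S_{k}(D^{2}\underline{u})\ge b f(\underline{u})$ reduces to $\omega_{1}\ge(c_{1}/b_{2})\,b v^{-\lambda}$, which holds since $\omega_{1}\ge c_{1}$ and $b v^{-\lambda}\le b_{2}$; likewise $S_{k}(D^{2}\bar{u})\le b f(\bar{u})$ reduces to $\omega_{1}\le(C_{1}/b_{1})\,b v^{-\lambda}$. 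Both functions tend to $+\infty$ on $\partial\Omega$ and $\underline{u}\le\bar{u}$ because $m_{1}\le M_{1}$. Feeding this into the upper and lower solution method developed earlier (together with Lemma~\ref{Lemma4.2}) yields a $k$-convex solution $u_{\lambda}$ with $\underline{u}\le u_{\lambda}\le\bar{u}$, i.e.\ the claimed estimate; uniqueness follows from the comparison principle for \eqref{M} with $f$ strictly increasing, via the usual boundary-blow-up argument comparing $u_{\lambda}$ near $\partial\Omega$ with the translates $w\pm\varepsilon$ of a second solution $w$ (after absorbing $e^{\mp\varepsilon}$ into the weight these become strict super/subsolutions).

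\emph{Step 3: the limits.} From $m_{1}-\beta\ln v(x)\le u_{\lambda}(x)\le M_{1}-\beta\ln v(x)$ with $m_{1}=k\ln\beta+\ln c_{1}-\ln b_{2}$, $M_{1}=k\ln\beta+\ln C_{1}-\ln b_{1}$ and $\beta=k+1+\lambda$, all of \eqref{f14}--\eqref{f15} is elementary. As $\lambda\to-k-1$, $\beta\to0^{+}$, so $m_{1},M_{1}\to-\infty$ and $\beta\ln v(x)\to0$, giving $u_{\lambda}(x)\to-\infty$; dividing the estimate by $\ln(k+1+\lambda)<0$ (which reverses the inequalities) and letting $\lambda\to-k-1$ gives $u_{\lambda}(x)/\ln(k+1+\lambda)\to k$, uniformly on a compact $\Omega_{1}\subset\Omega$ since $|\ln v|$ is bounded there. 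As $\lambda\to+\infty$, $\beta\to+\infty$ and $-\beta\ln v(x)\to+\infty$ (because $v(x)<1$), giving $u_{\lambda}(x)\to+\infty$; dividing by $(k+1+\lambda)\ln v(x)<0$ on $\Omega_{1}$ and letting $\lambda\to+\infty$ gives the ratio $\to-1$, again uniformly. The only genuinely delicate point is Step~1, namely the identity $S_{k}(D^{2}(C-\beta\ln v))=\beta^{k}v^{-(k+1)}\omega_{1}$ with the correct signs (the $\ln$-analogue of the computation behind Theorem~\ref{thm2.1}); once that is in hand, the rest is bookkeeping.
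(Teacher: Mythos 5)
Your proposal is correct and runs along essentially the same lines as the paper's proof: the identity $S_{k}\bigl(D^{2}(C-\beta\ln v)\bigr)=\beta^{k}v^{-(k+1)}\omega_{1}$ is exactly what the paper obtains by applying Lemma~\ref{lemma2.3} with $h(t)=C-\beta\ln t$, the sub/supersolution pair $m_{1}-\beta\ln v\le M_{1}-\beta\ln v$ and the appeal to Lemma~\ref{Lemma4.1} coincide with the paper's construction, and the parameter limits \eqref{f14}--\eqref{f15} are read off from the two-sided estimate in the same elementary way. The only cosmetic difference is that you sketch the rank-one/adjugate expansion by hand where the paper simply invokes Lemma~\ref{lemma2.3}, and you add a brief uniqueness argument via shifted translates $w\pm\varepsilon$, which the paper leaves implicit (it cites Salani's Theorem~4.2 in the analogous step of Theorem~\ref{thm2.1}).
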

\begin{theorem}\label{thm2.3}
Let $f$ satisfy $\mathbf{(S_{1})}$ $(\mbox{or }\mathbf{(S_{01})})$,
$\mathbf{(f_{1})}$-$\mathbf{(f_{2})}$ and $\mathbf{(f_{3})}$
$(\mbox{or }\mathbf{(f_{4})})$, $b$ satisfy
$\mathbf{(b_{1})}$-$\mathbf{(b_{2})}$ with
\begin{equation}\label{f11}
(h_{0}-1)\eta+1>0,
\end{equation}
then problem \eqref{M} has a $k$-convex solution $u_{\lambda}$
satisfying
\begin{equation}\label{4f3}
\psi(\tau_{2}\eta^{-1}v^{\eta}(x))\leq u_{\lambda}(x)\leq
\psi(\tau_{1}\eta^{-1}v^{\eta}(x)),\,x\in\Omega,
\end{equation}
where
\begin{equation}\label{ftj}
\eta=\frac{k+1+\lambda}{k},\,\,\,h_{0}:=\begin{cases}
\inf\limits_{t>0}((f(t))^{1/k})'\int_{t}^{+\infty}(f(s))^{-1/k}ds>0,\, &\mbox{ if } \mathbf{(f_{3})} \mbox{ holds},\\
\inf\limits_{t\in\mathbb{R}}((f(t))^{1/k})'\int_{t}^{+\infty}(f(s))^{-1/k}ds>0,\,
&\mbox{ if } \mathbf{(f_{4})} \mbox{ holds},
\end{cases}
\end{equation}
 $\tau_{1}$ and $\tau_{2}$ are given by
\begin{equation}\label{f12}
\tau_{1}^{k}\max_{x\in\bar{\Omega}}\omega_{2}(\tau_{1},
x)=b_{1},\,\,\,\tau_{2}^{k}\min_{x\in\bar{\Omega}}\omega_{2}(\tau_{2},
x)=b_{2}
\end{equation}
with
\begin{equation}\label{2J2}
\begin{split}
\omega_{2}(\tau_{j},
x)&=v(x)(-1)^{k}S_{k}(D^{2}v(x))\\
&+\big(\Psi(\tau_{j}\eta^{-1}(v(x))^{\eta})\eta+1-\eta\big)\sum_{i=1}^{C_{N}^{k}}(-1)^{k}\det(v_{x_{is}x_{ij}}(x))(-\nabla
v_{i}(x))^{T}B(v_{i}(x))\nabla v_{i}(x)
\end{split}
\end{equation}
and
\[
\Psi(\tau_{j}\eta^{-1}(v(x))^{\eta})=-\frac{\psi''(\tau_{j}\eta^{-1}(v(x))^{\eta})\tau_{j}\eta^{-1}(v(x))^{\eta}}{\psi'(\tau_{j}\eta^{-1}(v(x))^{\eta})},\,j=1,2.
\]
Moreover, we have
\[
\begin{cases}
\lim\limits_{\lambda\rightarrow -k-1}\max\limits_{x\in \bar{\Omega}_{1}}u_{\lambda}(x)=0,\, &\mbox{ if } \mathbf{(f_{3})} \mbox{ holds},\\
\lim\limits_{\lambda\rightarrow -k-1}\max\limits_{x\in
\bar{\Omega}_{1}}u_{\lambda}(x)=-\infty,\, &\mbox{ if }
\mathbf{(f_{4})} \mbox{ holds};\\
\lim\limits_{\lambda\rightarrow+\infty}\min\limits_{x\in\Omega_{1}}u_{\lambda}(x)=+\infty,\,
&\mbox{ if } \mathbf{(f_{3})}\, (\mbox{or }\mathbf{(f_{4})} \mbox{
with }C_{f}^{-\infty}=1)  \mbox{ holds}.
\end{cases}
\]
In particular, if $\mathbf{(f_{3})}$ holds, then we further have
\begin{equation}\label{f2}
\bigg(\frac{b_{2}}{c_{1}}\bigg)^{(1-C_{f}^{0})/k}\leq\liminf_{\lambda\rightarrow-k-1}\frac{u_{\lambda}(x)}{\psi(\eta^{-1})}\leq\limsup_{\lambda\rightarrow-k-1}\frac{u_{\lambda}(x)}{\psi(\eta^{-1})}\leq\bigg(\frac{b_{1}}{C_{1}}\bigg)^{(1-C_{f}^{0})/{k}}
\end{equation}
uniformly for $x\in \Omega_{1}$ which is an arbitrary compact subset
of $\Omega$, and if $\mathbf{(f_{4})}$ holds, then we further have
\begin{equation}\label{f3}
\bigg(\frac{b_{1}}{C_{1}}\bigg)^{(1-C_{f}^{-\infty})/k}\leq\liminf_{\lambda\rightarrow-k-1}\frac{u_{\lambda}(x)}{\psi(\eta^{-1})}\leq\limsup_{\lambda\rightarrow-k-1}\frac{u_{\lambda}(x)}{\psi(\eta^{-1})}\leq\bigg(\frac{b_{2}}{c_{1}}\bigg)^{(1-C_{f}^{-\infty})/{k}}
\end{equation}
uniformly for $x\in \Omega_{1}$.

It follows from \eqref{f2} and \eqref{f3} that if $C_{f}^{0}=1$
$($or $C_{f}^{-\infty}=1$$)$, then for any $x\in\Omega$,
\[
\lim_{\lambda\rightarrow-k-1}\frac{u_{\lambda}(x)}{\psi(\eta^{-1})}=1.
\]
\end{theorem}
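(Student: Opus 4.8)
The plan is to sandwich $u_\lambda$ between two explicit barriers $w_\tau(x):=\psi\bigl(\tau\eta^{-1}v^{\eta}(x)\bigr)$ $(\tau>0)$ and then invoke the upper--lower solution method for $k$-convex large solutions of \eqref{M} established in the present paper. The computational engine is an exact formula for $S_k(D^2 w_\tau)$. Writing $\varphi=\tau\eta^{-1}v^{\eta}$, differentiating \eqref{f22} gives $\psi'=-(f\circ\psi)^{1/k}$, and by the definition of $\Psi$ one has
\[
D^2 w_\tau=\psi'(\varphi)\,\tau v^{\eta-1}\Bigl[D^2 v+\tfrac{(\eta-1)-\Psi(\varphi)\eta}{v}\,\nabla v\otimes\nabla v\Bigr].
\]
Since $S_k$ is homogeneous of degree $k$ and, for a rank-one perturbation, $S_k(A+\sigma\,w\otimes w)=S_k(A)+\sigma\sum_{|I|=k}w_I^{T}\operatorname{adj}(A_I)w_I$ is affine in $\sigma$, I would expand the bracket and use $(\psi'(\varphi))^{k}=(-1)^{k}f(w_\tau)$ together with the arithmetic identity $k\eta-k-1=\lambda$ to obtain
\[
S_k(D^2 w_\tau)=f(w_\tau)\,\tau^{k}\,v^{\lambda}\,\omega_2(\tau,x),
\]
with $\omega_2$ exactly as in \eqref{2J2}. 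Then, using $b_1v^{\lambda}\le b\le b_2v^{\lambda}$, the sub/supersolution inequalities $S_k(D^2\underline u)\ge b f(\underline u)$ and $S_k(D^2\overline u)\le b f(\overline u)$ reduce to $\tau^{k}\omega_2(\tau,\cdot)\ge b_2$ and $\tau^{k}\omega_2(\tau,\cdot)\le b_1$ on $\bar\Omega$ --- precisely the relations \eqref{f12} defining $\tau_2$ and $\tau_1$.

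Next I would check the points needed to make this rigorous. By the dichotomies listed after \eqref{f22}, $\psi$ is well defined and $w_\tau(x)\to+\infty$ as $x\to\partial\Omega$. A short computation from $\psi'=-(f\circ\psi)^{1/k}$ shows $\Psi(t)=I(\psi(t))$, so $(f_3)$ (resp.\ $(f_4)$) forces $\Psi\ge h_0>0$; together with the standing hypothesis \eqref{f11} this gives $(\eta-1)-\Psi(\varphi)\eta<0$, hence $-\bigl(D^2 v+\tfrac{(\eta-1)-\Psi(\varphi)\eta}{v}\nabla v\otimes\nabla v\bigr)$ is positive definite. Because $\psi'<0$ this makes $D^2 w_\tau$ positive definite --- so $w_\tau$ is (strictly) $k$-convex --- and makes $\omega_2(\tau,\cdot)>0$ on $\bar\Omega$. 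Then I would note that $\tau\mapsto\tau^{k}\max_{\bar\Omega}\omega_2(\tau,\cdot)$ (resp.\ $\min_{\bar\Omega}$) is continuous, tends to $0$ as $\tau\to0^+$ (where $\Psi(\tau\eta^{-1}v^{\eta})\to C_f^{+\infty}$) and to $+\infty$ as $\tau\to+\infty$, so \eqref{f12} is solvable, and, using Lemma~\ref{Lemma4.2} and $b_1<b_2$, that $\tau_1\le\tau_2$, whence $\underline u:=w_{\tau_2}\le w_{\tau_1}=:\overline u$. The upper--lower solution method and interior regularity then produce a $k$-convex solution $u_\lambda$ satisfying \eqref{4f3}.

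For the behaviour as $\lambda\to-k-1$ one has $\eta\to0^+$; since $\Psi$ is bounded, $\Psi(\tau\eta^{-1}v^{\eta})\eta\to0$ and $1-\eta\to1$ uniformly, so $\omega_2(\tau,x)\to\omega_1(x)$ uniformly on $\bar\Omega$, whence \eqref{f12} gives $\tau_1^{k}\to b_1/C_1$ and $\tau_2^{k}\to b_2/c_1$. Also $\eta^{-1}v^{\eta}\to+\infty$ uniformly on every compact $\bar\Omega_1\subset\Omega$, so $w_{\tau_j}\to0^+$ under $(f_3)$ and $w_{\tau_j}\to-\infty$ under $(f_4)$ uniformly on $\bar\Omega_1$, which with \eqref{4f3} gives the first two displayed limits. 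For $\lambda\to+\infty$ one has $\eta\to+\infty$ and $v^{\eta}\to0$ uniformly on $\Omega_1$; under $(f_3)$ (or $(f_4)$ with $C_f^{-\infty}=1$, which keeps \eqref{f11} valid and $\tau_j$ from growing too quickly) this forces $\tau_j\eta^{-1}v^{\eta}\to0^+$, so $w_{\tau_j}\to+\infty$ and $\min_{\Omega_1}u_\lambda\to+\infty$.

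Finally, for \eqref{f2}--\eqref{f3}, from $\Psi(t)=-\psi''(t)t/\psi'(t)$ I get $(\ln(-\psi'))'=-\Psi(t)/t$; since $\psi(t)\to0^+$ under $(f_3)$ and $\psi(t)\to-\infty$ under $(f_4)$ as $t\to+\infty$, $\Psi(t)\to C_f^{0}$ (resp.\ $C_f^{-\infty}$), so $-\psi'$ is regularly varying of index $-C_f^{0}$ at $+\infty$, and by Karamata's theorem $\psi$ (resp.\ $|\psi|$) is regularly varying of index $1-C_f^{0}$ (resp.\ $1-C_f^{-\infty}$) at $+\infty$. Since $\tau_j v^{\eta}\to\tau_j^{\ast}$ with $(\tau_1^{\ast})^{k}=b_1/C_1$ and $(\tau_2^{\ast})^{k}=b_2/c_1$, the uniform convergence theorem for regularly varying functions gives $w_{\tau_j}/\psi(\eta^{-1})\to(\tau_j^{\ast})^{1-C_f^{0}}$ (resp.\ $(\tau_j^{\ast})^{1-C_f^{-\infty}}$) uniformly on $\Omega_1$; dividing \eqref{4f3} by $\psi(\eta^{-1})$ and tracking its sign yields \eqref{f2} and \eqref{f3}, and when $C_f^{0}=1$ (resp.\ $C_f^{-\infty}=1$) both bounds collapse to $1$. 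I expect the main obstacle to be not the (routine but bookkeeping-heavy) composite $S_k$ identity, but the uniform control of the implicitly defined constants $\tau_j$ as the parameters degenerate --- the uniform limit $\omega_2(\tau,\cdot)\to\omega_1$ and the growth estimate on $\tau_j$ as $\lambda\to+\infty$ --- together with the careful use of the \emph{uniform} convergence theorem for regularly varying functions, the argument of $\psi$ being $(\tau_j v^{\eta})\,\eta^{-1}$ rather than a fixed multiple of $\eta^{-1}$.
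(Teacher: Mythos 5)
Your proposal is correct and follows essentially the same route as the paper: the same barriers $\psi(\tau\eta^{-1}v^\eta)$, the same $S_k$-composition computation (stated in the paper as Lemma 4.2), the same use of the bound $\Psi=I\circ\psi\ge h_0$ together with \eqref{f11} to get $\omega_2>0$ and strict convexity, the continuity-plus-IVT argument for $\tau_1,\tau_2$ via Lemma \ref{Lemma4.2}, the sub-/supersolution lemma, and regular-variation of $\psi$ (Lemma \ref{lemma24} and the uniform convergence theorem) for the $\lambda\to-k-1$ and $\lambda\to+\infty$ asymptotics. The only cosmetic difference is that you re-derive the rank-one expansion of $S_k$ directly instead of quoting the paper's Lemma 4.2, but the calculation is identical.
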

\begin{remark}
If $\mathbf{(f_{4})}$ holds, then by Lemma \ref{lemma23}
$\mathbf{(i)}$, we have $h_{0}\leq1$. So, in Theorem \ref{thm2.3},
$\mathbf{(f_{4})}$ with $C_{f}^{-\infty}=1$ and \eqref{f11} for any
$\eta>0$ imply that $h_{0}=1$.
\end{remark}
\begin{theorem}\label{thm2.4}
Let $f$ satisfy $\mathbf{(S_{1})}$ $(\mbox{or }\mathbf{(S_{01})})$,
$\mathbf{(f_{1})}$-$\mathbf{(f_{2})}$ and $\mathbf{(f_{3})}$
$(\mbox{or }\mathbf{(f_{4})})$, $b$ satisfy $\mathbf{(b_{1})}$ and
the following condition
\begin{description}
\item[$\mathbf{(b_{3})}$] there exist $\mu>1$ and positive constants
$b_{1}<b_{2}$ such that for any $x\in \Omega$,
\[
b_{1}(v(x))^{-k-1}(-\ln v(x))^{-k\mu}\leq b(x)\leq
b_{2}(v(x))^{-k-1}(-\ln v(x))^{-k\mu},
\]
\end{description}
then problem \eqref{M} has a classical $k$-convex solution $u_{\mu}$
satisfying
\begin{equation}\label{4f2}
\psi(\tau_{4}(\mu-1)^{-1}(-\ln v(x))^{1-\mu})\leq u_{\mu}(x)\leq
\psi(\tau_{3}(\mu-1)^{-1}(-\ln v(x))^{1-\mu}),\,x\in\Omega,
\end{equation}
where  $\tau_{3}$ and $\tau_{4}$ are given by
\begin{equation}\label{f16}
\tau_{3}^{k}\max_{x\in\bar{\Omega}}\omega_{3}(\tau_{3},
x)=b_{1},\,\,\,\tau_{4}^{k}\min_{x\in\bar{\Omega}}\omega_{3}(\tau_{4},
x)=b_{2}
\end{equation}
with
\begin{equation}\label{2J3}
\begin{split}
\omega_{3}(\tau_{j},
x)&=v(x)(-1)^{k}S_{k}(D^{2}v(x))+\big(\Psi(\tau_{j}(\mu-1)^{-1}(-\ln
v(x))^{1-\mu})(\mu-1)(-\ln v(x))^{-1}\\
&+1-\mu(\ln
v(x))^{-1}\big)\sum_{i=1}^{C_{N}^{k}}(-1)^{k}\mbox{det}(v_{x_{is}x_{ij}}(x))(-\nabla
v_{i}(x))^{T}B(v_{i}(x))\nabla v_{i}(x)
\end{split}
\end{equation}
and
\[
\begin{split}
&\Psi(\tau_{j}(\mu-1)^{-1}(-\ln
v(x))^{1-\mu})\\
&=-\frac{\psi''(\tau_{j}(\mu-1)^{-1}(-\ln
v(x))^{1-\mu})\tau_{j}(\mu-1)^{-1}(-\ln
v(x))^{1-\mu})}{\psi'(\tau_{j}(\mu-1)^{-1}(-\ln
v(x))^{1-\mu})},\,j=3,4.
\end{split}
\]
Moreover, we have
\[
\begin{cases}
\lim\limits_{\mu\rightarrow 1^{+}}\max\limits_{x\in \bar{\Omega}_{1}}u_{\mu}(x)=0,\, &\mbox{ if } \mathbf{(f_{3})} \mbox{ holds},\\
\lim\limits_{\mu\rightarrow 1^{+}}\max\limits_{x\in
\bar{\Omega}_{1}}u_{\mu}(x)=-\infty,\, &\mbox{ if }
\mathbf{(f_{4})} \mbox{ holds};\\
\lim\limits_{\mu\rightarrow+\infty}\min\limits_{x\in\bar{\Omega}_{1}}u_{\mu}(x)=+\infty,\,
&\mbox{ if } \mathbf{(f_{3})} \mbox{ or }\mathbf{(f_{4})} \mbox{
holds}.
\end{cases}
\]
In particular, if $\mathbf{(f_{3})}$ holds, then
\begin{equation}\label{f4}
\bigg(\frac{b_{2}}{c_{2}}\bigg)^{(1-C_{f}^{0})/k}\leq\liminf_{\mu\rightarrow
1^{+}}\frac{u_{\mu}(x)}{\psi((\mu-1)^{-1})}\leq\limsup_{\mu\rightarrow
1^{+}}\frac{u_{\mu}(x)}{\psi((\mu-1)^{-1})}\leq\bigg(\frac{b_{1}}{C_{2}}\bigg)^{(1-C_{f}^{0})/{k}}
\end{equation}
uniformly for $x\in \Omega_{1}$ which is an arbitrary compact subset
of $\Omega$, and if $\mathbf{(f_{4})}$ holds, then we further have
\begin{equation}\label{f5}
\bigg(\frac{b_{1}}{C_{2}}\bigg)^{(1-C_{f}^{-\infty})/k}\leq\liminf_{\mu\rightarrow1^{+}}\frac{u_{\mu}(x)}{\psi((\mu-1)^{-1})}\leq\limsup_{\mu\rightarrow1^{+}}\frac{u_{\mu}(x)}{\psi((\mu-1)^{-1})}\leq\bigg(\frac{b_{2}}{c_{2}}\bigg)^{(1-C_{f}^{-\infty})/{k}}
\end{equation}
uniformly for $x\in \Omega_{1}$, where
\[
c_{2}=\min_{x\in\bar{\Omega}}\hat{\omega}_{2}(x),\,\,\,C_{2}=\max_{x\in\bar{\Omega}}\hat{\omega}_{2}(x)
\]
with
\[
\begin{split}
\hat{\omega}_{2}(x)&=v(x)(-1)^{k}S_{k}(D^{2}v(x))\\
&+\big(1-(\ln
v(x))^{-1}\big)\sum_{i=1}^{C_{N}^{k}}(-1)^{k}\mbox{det}(v_{x_{is}x_{ij}}(x))(-\nabla
v_{i}(x))^{T}B(v_{i}(x))\nabla v_{i}(x).
\end{split}
\]

It follows from \eqref{f4} and \eqref{f5} that if $C_{f}^{0}=1$
$($or $C_{f}^{-\infty}=1$$)$, then for any $x\in\Omega$,
\[
\lim_{\mu\rightarrow1^{+}}\frac{u_{\mu}(x)}{\psi((\mu-1)^{-1})}=1.
\]
\end{theorem}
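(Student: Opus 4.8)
The plan is to build, for each fixed $\mu>1$, a classical $k$-convex subsolution and supersolution of \eqref{M} of the shape $\psi\big(\tau(\mu-1)^{-1}(-\ln v(x))^{1-\mu}\big)$, with $v=-\phi$ as in $\mathbf{(II)}$ and $\psi$ defined by \eqref{f22}, the supersolution corresponding to $\tau=\tau_{3}$ and the subsolution to $\tau=\tau_{4}$, and then to invoke the upper and lower solution method for classical $k$-convex large solutions established earlier. First I would record the elementary identities for $\psi$: differentiating \eqref{f22} gives $\psi'=-(f\circ\psi)^{1/k}<0$ and $\psi''=\frac1k(f\circ\psi)^{2/k-1}f'(\psi)>0$, whence the quantity $\Psi(t)=-\psi''(t)t/\psi'(t)$ appearing in \eqref{2J3} is exactly $I\circ\psi$; by $\mathbf{(f_{2})}$ and $\mathbf{(f_{3})}$ (resp. $\mathbf{(f_{4})}$) together with the equivalences recorded after \eqref{f22}, one has $\Psi(t)\to C_{f}^{+\infty}$ as $t\to0^{+}$ and $\Psi$ is bounded on each interval $(0,T]$.

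The heart of the argument is the evaluation of $S_{k}$ on such a barrier. Writing $g(v)=\tau(\mu-1)^{-1}(-\ln v)^{1-\mu}$ one has $g'(v)=\tau v^{-1}(-\ln v)^{-\mu}>0$ and $g''(v)=\tau v^{-2}(-\ln v)^{-\mu}\big(\mu(-\ln v)^{-1}-1\big)$, so, choosing $\phi$ in $\mathbf{(II)}$ with $\rho$ small enough that $v<e^{-\mu}$ on $\bar{\Omega}$, we get $g''<0$. For $w=\psi(g(v))$ one computes $D^{2}w=\psi'(g)g'\,D^{2}v+\big(\psi''(g)(g')^{2}+\psi'(g)g''\big)\nabla v\otimes\nabla v$; since $\psi'(g)g'<0$ and $D^{2}v$ is negative definite on $\bar{\Omega}$, the first term is positive definite, and using $\psi''(g)=-\Psi(g)\psi'(g)/g$ the rank-one coefficient equals $(f(w))^{1/k}\big(\Psi(g)(g')^{2}/g-g''\big)>0$, so $D^{2}w$ is positive definite and $w$ is $k$-convex. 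Then I would use that $S_{k}$ is affine under rank-one perturbations, $S_{k}(M+s\,pp^{T})=S_{k}(M)+s\sum_{|I|=k}\det(M_{I})\,p_{I}^{T}(M_{I})^{-1}p_{I}$, its $k$-homogeneity, and $(\psi'(g))^{k}=(-1)^{k}f(w)$, to reduce $S_{k}(D^{2}w)$ to
\[
S_{k}(D^{2}w)=f(w)\,\big(g'(v)\big)^{k}\,v^{-1}\,\omega_{3}(\tau,x),
\]
where $\omega_{3}(\tau,x)$ is precisely \eqref{2J3}: the factor $v^{-1}$ is produced by the summand $v(-1)^{k}S_{k}(D^{2}v)$ of $\omega_{3}$, and the multiplier of the remaining group of terms equals $\frac{vg'}{g}\big(\Psi(g)-\frac{gg''}{(g')^{2}}\big)$, which is the coefficient displayed there.

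Since $\big(g'(v)\big)^{k}v^{-1}=\tau^{k}(v(x))^{-k-1}(-\ln v(x))^{-k\mu}$, the two-sided bound $\mathbf{(b_{3})}$ converts the identity above into: if $\tau_{3}$ is chosen by \eqref{f16} then $S_{k}(D^{2}w)\le b(x)f(w)$ and $w$ is a supersolution, while if $\tau_{4}$ is chosen by \eqref{f16} then $S_{k}(D^{2}w)\ge b(x)f(w)$ and $w$ is a subsolution; the constants $\tau_{3},\tau_{4}$ exist by the intermediate value theorem (the map $\tau\mapsto\tau^{k}\max_{\bar{\Omega}}\omega_{3}(\tau,\cdot)$ runs from $0$ as $\tau\to0^{+}$, using boundedness of $\Psi$, to $+\infty$ as $\tau\to+\infty$, using $\mathbf{(f_{1})}$ together with $\mathbf{(f_{3})}$ or $\mathbf{(f_{4})}$), and $\tau_{3}\le\tau_{4}$ because $b_{1}<b_{2}$. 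As $v\to0^{+}$ one has $g(v)\to0^{+}$ and $\psi(0^{+})=+\infty$, so both barriers blow up on $\partial\Omega$, and the subsolution lies below the supersolution because $\psi$ is decreasing; the upper and lower solution method then delivers a classical $k$-convex solution $u_{\mu}$ with \eqref{4f2}.

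It then remains to read off the limit behavior from \eqref{4f2}. Fix a compact $\Omega_{1}\subset\Omega$. As $\mu\to1^{+}$ the arguments $\tau_{j}(\mu-1)^{-1}(-\ln v(x))^{1-\mu}\to+\infty$ uniformly on $\Omega_{1}$ and $\omega_{3}(\tau_{j},\cdot)\to\hat{\omega}_{2}$ uniformly on $\bar{\Omega}$ (the term $\Psi(\cdot)(\mu-1)(-\ln v)^{-1}$ vanishes), so $\tau_{3}\to(b_{1}/C_{2})^{1/k}$ and $\tau_{4}\to(b_{2}/c_{2})^{1/k}$; since $\Psi=I\circ\psi$, conditions $\mathbf{(f_{3})}$ (resp. $\mathbf{(f_{4})}$) and Karamata theory (Lemma \ref{Lemma4.2}, Proposition \ref{proposition4.6}) make $\psi$ (resp. $-\psi$) regularly varying at $+\infty$ with index $1-C_{f}^{0}$ (resp. $1-C_{f}^{-\infty}$), so $\psi\big(\tau_{j}(\mu-1)^{-1}(-\ln v)^{1-\mu}\big)/\psi\big((\mu-1)^{-1}\big)$ tends to $(\lim\tau_{j})^{1-C_{f}^{0}}$ (resp. the analogue with $C_{f}^{-\infty}$), and dividing \eqref{4f2} by $\psi((\mu-1)^{-1})$ --- positive under $\mathbf{(f_{3})}$, negative under $\mathbf{(f_{4})}$, which accounts for the reversed bounds in \eqref{f5} --- gives \eqref{f4} and \eqref{f5}; in the case $C_{f}^{0}=1$ (resp. $C_{f}^{-\infty}=1$) the exponent is $0$ and the ratio tends to $1$. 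The limits $\max_{\bar{\Omega}_{1}}u_{\mu}\to 0$ under $\mathbf{(f_{3})}$, resp. $\to-\infty$ under $\mathbf{(f_{4})}$, follow from $\psi(+\infty)=0$, resp. $-\infty$. Lastly, taking $\phi$ with $\max_{\bar{\Omega}}v<1/e$, as $\mu\to+\infty$ one has $(-\ln v(x))^{1-\mu}\to0$ uniformly on $\bar{\Omega}$ and $\tau_{4}\to0$, hence the subsolution argument tends to $0$ uniformly, so $\min_{\bar{\Omega}_{1}}u_{\mu}\ge\psi\big(\sup_{\bar{\Omega}}\tau_{4}(\mu-1)^{-1}(-\ln v)^{1-\mu}\big)\to+\infty$. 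The step I expect to be most delicate is the exact identification $S_{k}(D^{2}w)=f(w)(g'(v))^{k}v^{-1}\omega_{3}(\tau,x)$ with $\omega_{3}$ in the precise form \eqref{2J3}: it demands both writing the rank-one perturbation formula for $S_{k}$ in terms of $k\times k$ principal submatrices and their inverses and tracking the $(-1)^{k}$ factors correctly, and it is also where one must secure $k$-convexity of the barriers on all of $\Omega$, here by shrinking $\rho$; a secondary difficulty is the uniform control of $\tau_{3},\tau_{4}$ and of the arguments of $\psi$ as $\mu$ degenerates, on which the sharpness of \eqref{f4}--\eqref{f5} rests.
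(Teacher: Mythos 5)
Your proposal is correct and follows essentially the paper's own route: the same barrier $\psi\big(\tau(\mu-1)^{-1}(-\ln v)^{1-\mu}\big)$, the chain-rule identity for $S_k$ of Lemma~\ref{lemma2.3} (your rank-one perturbation formula), the continuity Lemma~\ref{Lemma4.2} plus the intermediate value theorem to produce $\tau_3\le\tau_4$, the upper/lower-solution machinery of Lemma~\ref{Lemma4.1}, and the Karamata uniform-convergence argument for the $\mu\to1^{+}$ asymptotics, with the sign flip on dividing by $\psi((\mu-1)^{-1})<0$ under $\mathbf{(f_{4})}$ correctly handled. One small observation: your computation of $\frac{vg'}{g}\big(\Psi(g)-\frac{gg''}{(g')^{2}}\big)$ gives the coefficient $1-\mu(-\ln v)^{-1}$ (equivalently $1+\mu(\ln v)^{-1}$) in $\omega_3$, whereas the paper's display \eqref{2J3} reads $1-\mu(\ln v(x))^{-1}$; yours is the right one, since the paper's normalization $\max_{\bar\Omega}v<e^{-\mu}$ is exactly what makes $1-\mu(-\ln v)^{-1}>0$, while the expression as displayed in the paper is $>1$ unconditionally, so the paper has a sign typo there (and correspondingly in $\hat\omega_2$).
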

\begin{theorem}\label{thm2.5}
Let $f$ satisfy $\mathbf{(S_{1})}$ $(\mbox{or }\mathbf{(S_{01})})$,
$\mathbf{(f_{1})}$-$\mathbf{(f_{2})}$ and $\mathbf{(f_{3})}$
$(\mbox{or }\mathbf{(f_{4})})$, $b$ satisfy $\mathbf{(b_{1})}$ and
the following condition
\begin{description}
\item[$\mathbf{(b_{4})}$] there exist positive constants $b_{1}<
b_{2}$, $\lambda\geq -k-1$ and some function $\tilde{L}\in \mathcal
{L}$ such that
\[
b_{1}(v(x))^{\lambda}\tilde{L}^{k}(v(x))\leq b(x)\leq
b_{2}(v(x))^{\lambda}\tilde{L}^{k}(v(x)).
\]
\end{description}
If we further assume that
\begin{equation}\label{f17}
kh_{0}+(1+\lambda)(h_{0}-1)
\end{equation}
in $\mathbf{(b_{4})}$, where $h_{0}$ is given by \eqref{ftj}, then
problem \eqref{M} has a classical $k$-convex solution $u$ satisfying
\begin{equation}\label{4f1}
\psi\bigg(\tau_{6}\int_{0}^{v(x)}s^{\frac{1+\lambda}{k}}\tilde{L}(s)ds\bigg)\leq
u(x)\leq
\psi\bigg(\tau_{5}\int_{0}^{v(x)}s^{\frac{1+\lambda}{k}}\tilde{L}(s)ds\bigg),\,x\in\Omega,
\end{equation}
where  $\tau_{5}$ and $\tau_{6}$ are given by
\begin{equation}\label{f18}
\tau_{5}^{k}\max_{x\in\bar{\Omega}}\omega_{4}(\tau_{5},
x)=b_{1},\,\,\,\tau_{6}^{k}\min_{x\in
\bar{\Omega}}\omega_{4}(\tau_{6}, x)=b_{2}
\end{equation}
with
\begin{equation}\label{2J4}
\begin{split}
\omega_{4}(\tau_{j},
x)&=v(x)(-1)^{k}S_{k}(D^{2}v(x))+\bigg[\Psi\bigg(\tau_{j}\int_{0}^{v(x)}s^{\frac{1+\lambda}{k}}\tilde{L}(s)ds\bigg)\frac{(v(x))^{\frac{k+1+\lambda}{k}}\tilde{L}(v(x))}{\int_{0}^{v(x)}s^{\frac{1+\lambda}{k}}\tilde{L}(s)ds}\\
&-\frac{\lambda+1}{k}-\frac{v(x)\tilde{L}'(v(x))}{\tilde{L}(v(x))}\bigg]\sum_{i=1}^{C_{N}^{k}}(-1)^{k}det(
v_{x_{is}x_{ij}})(-\nabla v_{i}(x))^{T}B(v_{i}(x))\nabla v_{i}(x)
\end{split}
\end{equation}
and
\[
\Psi\bigg(\tau_{j}\int_{0}^{v(x)}s^{\frac{1+\lambda}{k}}\tilde{L}(s)ds\bigg)=-\frac{\psi''(\tau_{j}\int_{0}^{v(x)}s^{\frac{1+\lambda}{k}}\tilde{L}(s)ds)\tau_{j}\int_{0}^{v(x)}s^{\frac{1+\lambda}{k}}\tilde{L}(s)ds}{\psi'(\tau_{j}\int_{0}^{v(x)}s^{\frac{1+\lambda}{k}}\tilde{L}(s)ds)},
j=5,6.
\]
\end{theorem}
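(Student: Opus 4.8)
The plan is to apply the upper and lower solution method for problem \eqref{M} developed in this paper to the explicit pair $\underline{u}(x):=\psi\bigl(\tau_{6}G(v(x))\bigr)$, $\bar{u}(x):=\psi\bigl(\tau_{5}G(v(x))\bigr)$, where $G(t):=\int_{0}^{t}s^{(1+\lambda)/k}\tilde{L}(s)\,ds$ (so $G'(t)=t^{(1+\lambda)/k}\tilde{L}(t)$); here $\psi\in C^{\infty}$ by \eqref{f22} and $\tilde{L}\in C^{1}$ by its definition, hence $\underline{u},\bar{u}\in C^{2}(\Omega)$. As in the proofs of Theorems \ref{thm2.3}--\ref{thm2.4}, everything reduces to: (a) an exact identity for $S_{k}$ of such a composite, (b) positivity of $\omega_{4}$, $k$-convexity of $\underline{u},\bar{u}$, and solvability of \eqref{f18} with $\tau_{5}\le\tau_{6}$, and (c) invoking the method.

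For (a), write $u=\Phi(v)$ with $\Phi=\psi\circ(\tau G)$, so $D^{2}u=\Phi'(v)D^{2}v+\Phi''(v)\nabla v\otimes\nabla v$ is a rank-one perturbation of $\Phi'(v)D^{2}v$; the expansion $S_{k}(A+t\,ww^{T})=S_{k}(A)+t\sum_{|I|=k}(\det A_{I})\,w_{I}^{T}A_{I}^{-1}w_{I}$ (sum over $k$-element index sets $I$, $A_{I}$ the associated principal submatrix, $A$ invertible as will hold here) gives
\[
S_{k}(D^{2}u)=(\Phi'(v))^{k}\Bigl[S_{k}(D^{2}v)+\tfrac{\Phi''(v)}{\Phi'(v)}\sum_{i=1}^{C_{N}^{k}}\det(v_{x_{is}x_{ij}})\,(\nabla v_{i})^{T}B(v_{i})\nabla v_{i}\Bigr].
\]
Differentiating \eqref{f22} gives $\psi'=-(f\circ\psi)^{1/k}$, the definition of $\Psi$ gives $\psi''(t)/\psi'(t)=-\Psi(t)/t$, and $G''/G'=(1+\lambda)/(kv)+\tilde{L}'(v)/\tilde{L}(v)$; hence $(\Phi'(v))^{k}=(-1)^{k}f(u)(\tau G'(v))^{k}$ and $\Phi''(v)/\Phi'(v)=-\Psi(\tau G(v))G'(v)/G(v)+(1+\lambda)/(kv)+\tilde{L}'(v)/\tilde{L}(v)$. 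Substituting, using $(\tau G'(v))^{k}=\tau^{k}(v(x))^{1+\lambda}\tilde{L}^{k}(v(x))$ and that $D^{2}v$ is negative definite on $\bar{\Omega}$ (so $(-1)^{k}S_{k}(D^{2}v)>0$ and every $(-1)^{k}\det(v_{x_{is}x_{ij}})(-\nabla v_{i})^{T}B(v_{i})\nabla v_{i}\ge0$), one collects exactly
\[
S_{k}\bigl(D^{2}[\psi(\tau G(v))]\bigr)=\tau^{k}(v(x))^{\lambda}\tilde{L}^{k}(v(x))\,\omega_{4}(\tau,x)\,f\bigl(\psi(\tau G(v(x)))\bigr),\qquad x\in\Omega,
\]
with $\omega_{4}$ as in \eqref{2J4}; this is routine bookkeeping once the rank-one formula is in hand.

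For (b), note $\Psi(\tau G(v))=I(\psi(\tau G(v)))\ge h_{0}$ (by \eqref{ftj}, \eqref{f22}), $v\tilde{L}'(v)/\tilde{L}(v)\to0$ (Proposition \ref{proposition4.6}), and $vG'(v)/G(v)=v^{(k+1+\lambda)/k}\tilde{L}(v)/G(v)\to(k+1+\lambda)/k$ as $v\to0^{+}$ (Karamata's theorem). Thus the bracket multiplying $\sum_{i}(-1)^{k}\det(v_{x_{is}x_{ij}})(-\nabla v_{i})^{T}B(v_{i})\nabla v_{i}$ in \eqref{2J4} is, for $v$ small, bounded below by a quantity tending to $\bigl((k+1+\lambda)h_{0}-(1+\lambda)\bigr)/k>0$ (using \eqref{f17} and $C_{f}^{+\infty}\ge h_{0}$). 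Choosing $\phi$ --- hence $v$ --- with $\max_{\bar{\Omega}}v<\rho$ for $\rho$ small makes $v(-1)^{k}S_{k}(D^{2}v)$ uniformly small and, since $\Psi$ is globally bounded and $\Psi(\tau G(v))\to C_{f}^{+\infty}$ as $v\to0$ uniformly for bounded $\tau$, makes $\omega_{4}(\tau,\cdot)$ positive and bounded on $\bar{\Omega}$ uniformly in $\tau>0$ and (for bounded $\tau$) uniformly close to the $\tau$-independent positive function $\omega_{4}^{\ast}(x):=k^{-1}\bigl((k+1+\lambda)C_{f}^{+\infty}-(1+\lambda)\bigr)\sum_{i}(-1)^{k}\det(v_{x_{is}x_{ij}})(-\nabla v_{i})^{T}B(v_{i})\nabla v_{i}$ (positive on $\bar{\Omega}$ since $v>0$ in $\Omega$ while $\nabla v\neq0$ on $\partial\Omega$). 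Hence $\tau\mapsto\tau^{k}\max_{\bar{\Omega}}\omega_{4}(\tau,\cdot)$ and $\tau\mapsto\tau^{k}\min_{\bar{\Omega}}\omega_{4}(\tau,\cdot)$ are continuous, increasing on the relevant bounded range, vanish as $\tau\to0^{+}$ and diverge as $\tau\to+\infty$, so \eqref{f18} has unique solutions $\tau_{5},\tau_{6}>0$, and $b_{1}<b_{2}$ forces $\tau_{5}\le\tau_{6}$; the same smallness of $\rho$ gives $v\,\Phi''(v)/\Phi'(v)\to-k^{-1}\bigl((k+1+\lambda)C_{f}^{+\infty}-(1+\lambda)\bigr)<0$, so $\Phi''>0$ (as $\Phi'<0$), whence $D^{2}u=\Phi'(v)D^{2}v+\Phi''(v)\nabla v\otimes\nabla v$ is positive definite and $\underline{u},\bar{u}$ are strictly $k$-convex.

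For (c), by (a) and \eqref{f18} we have $\tau_{6}^{k}\omega_{4}(\tau_{6},x)\ge b_{2}$ and $\tau_{5}^{k}\omega_{4}(\tau_{5},x)\le b_{1}$ on $\bar{\Omega}$, so $\mathbf{(b_{4})}$ gives $S_{k}(D^{2}\underline{u})\ge b(x)f(\underline{u})$ and $S_{k}(D^{2}\bar{u})\le b(x)f(\bar{u})$ in $\Omega$; moreover $G(v(x))\to0$ and $\psi(t)\to+\infty$ as $t\to0^{+}$, so both blow up on $\partial\Omega$, and $\underline{u}\le\bar{u}$ since $\psi$ is decreasing and $\tau_{5}\le\tau_{6}$. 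The upper and lower solution method applied to this pair then yields a classical $k$-convex solution $u$ of \eqref{M} with $\underline{u}\le u\le\bar{u}$, which is \eqref{4f1}. The main obstacle is step (b): verifying $\omega_{4}(\tau,\cdot)>0$ on $\bar{\Omega}$ and solving \eqref{f18} with $\tau_{5}\le\tau_{6}$ --- this is exactly where \eqref{f17} is used (it makes the boundary value of the bracket in \eqref{2J4}, hence $\omega_{4}^{\ast}$, positive via $C_{f}^{+\infty}\ge h_{0}$) and where the regular-variation machinery (Proposition \ref{proposition4.6}, Karamata's theorem for $G$) together with the freedom to shrink $\max_{\bar{\Omega}}v$ are indispensable; the composition identity (a), though lengthy, is routine.
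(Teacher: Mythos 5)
Your proposal follows essentially the same route as the paper: the same composite sub/supersolutions $\psi(\tau_j G(v))$, the same Karamata-theoretic estimate $vG'(v)/G(v)\to(k+1+\lambda)/k$ via Proposition \ref{HUR}, the same uniform-in-$\tau$ bounds on $\omega_4$ from $h_0\le\Psi\le H_0$ plus the ability to shrink $\max_{\bar\Omega}v$, Lemma \ref{Lemma4.2} together with the intermediate value theorem to solve \eqref{f18}, and finally Lemma \ref{Lemma4.1}. Your rank-one expansion of $S_k$ is just a self-contained derivation of Lemma \ref{lemma2.3}, and your strict-convexity verification (positive definiteness of $D^2u=\Phi'(v)D^2v+\Phi''(v)\nabla v\otimes\nabla v$ via $\Phi'<0$, $\Phi''>0$) is a slightly tidier route to the same conclusion than the paper's term-by-term check that $S_l(D^2\underline{u})>0$ for $l=1,\dots,N$. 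One small slip: your auxiliary $\omega_4^\ast$ omits the $v(-1)^kS_k(D^2v)$ term, and the sum $\sum_i(-1)^k\det(v_{x_{is}x_{ij}})(-\nabla v_i)^TB(v_i)\nabla v_i$ by itself is only nonnegative on $\bar\Omega$ (it can vanish at the interior critical point of $v$); positivity of $\omega_4$ over $\bar\Omega$ really comes from combining that sum (positive near $\partial\Omega$) with the first term (positive in $\Omega$), exactly as in the paper's Theorem \ref{thm2.1} proof.
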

\begin{remark}
In Theorem \ref{thm2.5}, if $\lambda=-k-1$, we need verify
\[\int_{0}^{1}\frac{\tilde{L}(s)}{s}ds<+\infty.\]
\end{remark}
\begin{remark}
In Theorem \ref{thm2.5}, if $\lambda=-k-1$ and
$\tilde{L}(v(x))=(-\ln v(x))^{k\mu}$ with $\mu>1$, then the global
estimate \eqref{4f1} is the same as \eqref{4f2}. If $\lambda>-k-1$
and $\tilde{L}\equiv 1$, then the estimate \eqref{4f1} is the same
as \eqref{4f3}.
\end{remark}
\begin{remark}
In Theorem \ref{thm2.5}, if $f(u)=u^{\gamma}$ with $\gamma>k$, then
the $k$-convex solution $u$ satisfies
\[
\bigg(\frac{(\gamma-k)\tau_{6}}{k}\int_{0}^{v(x)}s^{\frac{1+\lambda}{k}}\tilde{L}(s)ds\bigg)^{\frac{k}{k-\gamma}}\leq
u(x)\leq
\bigg(\frac{(\gamma-k)\tau_{5}}{k}\int_{0}^{v(x)}s^{\frac{1+\lambda}{k}}\tilde{L}(s)ds\bigg)^{\frac{k}{k-\gamma}},\,x\in\Omega,
\]
and if $f(u)=\exp(u)$, then the $k$-convex solution $u$ satisfies
\[
-k\bigg[\ln\bigg(\tau_{6}\int_{0}^{v(x)}s^{\frac{1+\lambda}{k}}\tilde{L}(s)ds\bigg)-\ln
k\bigg]\leq
u(x)\leq-k\bigg[\ln\bigg(\tau_{5}\int_{0}^{v(x)}s^{\frac{1+\lambda}{k}}\tilde{L}(s)ds\bigg)-\ln
k\bigg],\,x\in\Omega.
\]
\end{remark}

In fact, by Lemma \ref{lemma21} $\mathbf{(ii)}$-$\mathbf{(iii)}$, we
see that the conditions $\mathbf{(S_{1})}$ (or $\mathbf{(S_{01})}$),
$\mathbf{(f_{1})}$-$\mathbf{(f_{2})}$ imply that $f\in NRV_{\gamma}$
with $\gamma>k$ or $f$ is rapidly varying to positive infinity at
positive infinity. Next, we will show the optimal global behavior of
$k$-convex solutions to problem \eqref{M} when $f\in RV_{k}$.

Let $\varphi$ be uniquely determined by
\begin{equation}\label{f23}
\int_{\varphi(t)}^{+\infty}((k+1)F(s))^{-1/(k+1)}ds=t,
\end{equation}
where
\begin{equation}\label{f21}
F(t)=\int_{\varsigma}^{t}f(s)ds \mbox{ with }\varsigma=\begin{cases}
0,\, &\mbox{ if } \mathbf{(S_{1})} \mbox{ holds},\\
-\infty,\,&\mbox{ if } \mathbf{(S_{01})} \mbox{ holds}.
\end{cases}
\end{equation}
We note that
\begin{description}
\item[$\mathbf{(i)}$] $\mathbf{(S_{1})}$ (or $\mathbf{(S_{01})}$),
$\mathbf{(f_{5})}$ imply
\[
\varphi(t)\rightarrow+\infty \mbox{ if and only if
}t\rightarrow0^{+};
\]
\item[$\mathbf{(ii)}$]$\mathbf{(S_{1})}$, $\mathbf{(f_{5})}$ and
$\mathbf{(f_{7})}$ imply
\[
\varphi(t)\rightarrow0^{+} \mbox{ if and only if
}t\rightarrow+\infty;
\]
\item[$\mathbf{(iii)}$]$\mathbf{(S_{01})}$, $\mathbf{(f_{5})}$ and
$\mathbf{(f_{8})}$ imply
\[
\varphi(t)\rightarrow-\infty \mbox{ if and only if
}t\rightarrow+\infty.
\]
\end{description}

Our result can be summarized as follows.
\begin{theorem}\label{thm2.6}
Let $f$ satisfy $\mathbf{(S_{1})}$ $\mathbf{(\mbox{or }S_{01})}$,
$\mathbf{(f_{5})}$-$\mathbf{(f_{6})}$ and $\mathbf{(f_{7})}$ $(\mbox{or }\mathbf{(f_{8})}$, $b$ satisfy
$\mathbf{(b_{1})}$ and $\mathbf{(b_{4})}$ with $-k-1<\lambda<0$,
then problem \eqref{M} has a classical $k$-convex solution $u$
satisfying
\[
\varphi\bigg(\tau_{8}\bigg(\int_{0}^{v(x)}s^{\frac{1+\lambda}{k}}\tilde{L}(s)ds\bigg)^{\frac{k}{k+1}}\bigg)\leq
u(x)\leq
\varphi\bigg(\tau_{7}\bigg(\int_{0}^{v(x)}s^{\frac{1+\lambda}{k}}\tilde{L}(s)ds\bigg)^{\frac{k}{k+1}}\bigg),\,x\in\Omega,
\]
where $\tau_{7}$ and $\tau_{8}$ are given by
\begin{equation*}\label{f20}
\tau_{7}^{k+1}\max_{x\in\bar{\Omega}}\omega_{5}(\tau_{7},
x)=b_{1},\,\,\,\tau_{8}^{k+1}\min_{x\in\bar{\Omega}}\omega_{5}(\tau_{8}
x)=b_{2},
\end{equation*}
with
\begin{equation}\label{2J5}
\begin{split}
\omega_{5}(\tau_{j}, x)&=\bigg(\frac{k}{k+1}\bigg)^{k}\bigg\{\Phi\bigg(\tau_{j}\bigg(\int_{0}^{v(x)}s^{\frac{1+\lambda}{k}}\tilde{L}(s)ds\bigg)^{\frac{k}{k+1}}\bigg)v(x)(-1)^{k}S_{k}(D^{2}v(x))\\
&+\bigg[\frac{(v(x))^{\frac{k+1+\lambda}{k}}\tilde{L}(v(x))}{\int_{0}^{v(x)}s^{\frac{1+\lambda}{k}}\tilde{L}(s)ds}+\Phi\bigg(\tau_{j}\bigg(\int_{0}^{v(x)}s^{\frac{1+\lambda}{k}}\tilde{L}(s)ds\bigg)^{\frac{k}{k+1}}\bigg)\bigg(\frac{(v(x))^{\frac{k+1+\lambda}{k}}\tilde{L}(v(x))}{(k+1)\int_{0}^{v(x)}s^{\frac{1+\lambda}{k}}\tilde{L}(s)ds}\\
&-\frac{\lambda+1}{k}-\frac{v(x)\tilde{L}'(v(x))}{\tilde{L}(v(x))}\bigg)\bigg]\sum_{i=1}^{C_{N}^{k}}(-1)^{k}det(
v_{x_{is}x_{ij}}(x))(-\nabla v_{i}(x))^{T}B(v_{i}(x))\nabla
v_{i}(x)\bigg\}
\end{split}
\end{equation}
and
\[
\begin{split}
&\Phi\bigg(\tau_{j}\bigg(\int_{0}^{v(x)}s^{\frac{1+\lambda}{k}}\tilde{L}(s)ds\bigg)^{\frac{k}{k+1}}\bigg)\\
&=-\frac{\varphi'(\tau_{j}(\int_{0}^{v(x)}s^{\frac{1+\lambda}{k}}\tilde{L}(s)ds)^{\frac{k}{k+1}})}{\varphi''(\tau_{j}(\int_{0}^{v(x)}s^{\frac{1+\lambda}{k}}\tilde{L}(s)ds)^{\frac{k}{k+1}})\tau_{j}(\int_{0}^{v(x)}s^{\frac{1+\lambda}{k}}\tilde{L}(s)ds)^{\frac{k}{k+1}}},\,j=7,8.
\end{split}
\]
\end{theorem}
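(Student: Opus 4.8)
The plan is to construct suitable sub- and supersolutions of the form $\bar{u}(x)=\varphi\big(\xi(v(x))\big)$ and $\underline{u}(x)=\varphi\big(\zeta(v(x))\big)$, where $\xi(t)=\tau_{7}\big(\int_{0}^{t}s^{(1+\lambda)/k}\tilde{L}(s)\,ds\big)^{k/(k+1)}$ and $\zeta(t)=\tau_{8}\big(\int_{0}^{t}s^{(1+\lambda)/k}\tilde{L}(s)\,ds\big)^{k/(k+1)}$, and then invoke the upper–lower solution method for classical $k$-convex large solutions established earlier in the paper. First I would record the structural facts needed: the hypotheses $\mathbf{(f_{5})}$–$\mathbf{(f_{6})}$ together with $\mathbf{(f_{7})}$ (or $\mathbf{(f_{8})}$) ensure, via the lemmas on regular variation quoted in the excerpt, that $F\in RV$-type estimates hold, that $\varphi$ is well defined by \eqref{f23} with the monotonicity/limit behavior listed just before the theorem, and that the quantity $\Phi$ appearing in \eqref{2J5} has a finite positive limit as its argument tends to the relevant endpoint (this is the translation of $\lim J=E_f$, respectively $\lim J=+\infty$, into statements about $\varphi$). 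The condition $-k-1<\lambda<0$ guarantees that $\int_{0}^{v(x)}s^{(1+\lambda)/k}\tilde{L}(s)\,ds$ converges near $0$ and that its logarithmic derivative, $(v)^{(k+1+\lambda)/k}\tilde{L}(v)\big/\int_{0}^{v}s^{(1+\lambda)/k}\tilde{L}(s)\,ds$, tends to $k/(k+1+\lambda)>0$ — here I would use the Karamata-type computations packaged in Proposition \ref{proposition4.6} and the associated lemmas.

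The core computation is to expand $S_{k}(D^{2}\bar{u})$. Writing $w=\varphi\circ\xi\circ v$ and using the chain rule, $D^{2}w = \varphi''(\xi)(\xi')^{2}\,\nabla v\otimes\nabla v + \varphi'(\xi)\xi''\,\nabla v\otimes\nabla v + \varphi'(\xi)\xi'\,D^{2}v$; the key algebraic identity — exactly the one underlying the functions $\omega_{j}$ in Theorems \ref{thm2.1}–\ref{thm2.5}, applied with $k$ general via Lemma \ref{Lemma4.2} — expresses $S_{k}$ of a matrix of the form $aD^{2}v + b\,\nabla v\otimes\nabla v$ as $a^{k}(-1)^{k}S_{k}(D^{2}v)\cdot(\text{something})$ plus a rank-one correction term involving the principal submatrices $(v_{x_{is}x_{ij}})$, their inverses $B(v_i)$, and the gradients $\nabla v_i$. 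Carrying this out and collecting the prefactor $\big(\varphi'(\xi)\xi'\big)^{k}$, one obtains $S_{k}(D^{2}\bar{u}) = \big(\varphi'(\xi)\xi'\big)^{k}\,(-1)^{k}\big[\text{bracket}\big]$, and after substituting $\xi'(v)=\tfrac{k}{k+1}\tau_{7}(v)^{(1+\lambda)/k}\tilde{L}(v)\big(\int_{0}^{v}s^{(1+\lambda)/k}\tilde{L}(s)ds\big)^{-1/(k+1)}$ and using the defining ODE $((k+1)F(\varphi))^{-1/(k+1)} = -\varphi'$ together with $\varphi''/\varphi' $ expressed through $\Phi$, the bracket collapses precisely to $\tau_{7}^{k+1}\,v^{\lambda}\tilde{L}^{k}(v)\,\omega_{5}(\tau_{7},x)/$(the $(k/(k+1))^{k}$ normalization), so that $S_{k}(D^{2}\bar{u}) = f(\bar{u})\,\tau_{7}^{k+1}\,v^{\lambda}\tilde{L}^{k}(v)\,\omega_{5}(\tau_{7},x)$. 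By the choice $\tau_{7}^{k+1}\max_{\bar\Omega}\omega_{5}(\tau_7,x)=b_{1}$ in \eqref{f20} and $\mathbf{(b_{4})}$, this gives $S_{k}(D^{2}\bar{u})\le b_{1}v^{\lambda}\tilde{L}^{k}(v)\,f(\bar u)\le b(x)f(\bar u)$, i.e. $\bar u$ is a supersolution; the analogous computation with $\tau_{8}$ and $\min_{\bar\Omega}\omega_{5}$ produces a subsolution $\underline{u}$. One must also check $k$-convexity of $\bar u,\underline u$ (inherited from the fact that $v$ makes $D^2v$ negative definite, so $(-1)^k S_k(D^2 v)>0$, and $\varphi$ is decreasing and convex so $\varphi'\xi'<0$, $\varphi''(\xi')^2>0$), and the boundary blow-up $\bar u,\underline u\to+\infty$ as $d(x)\to 0$, which follows from $\varphi(t)\to+\infty$ as $t\to 0^+$ and $\int_0^{v(x)}\to 0$ as $v(x)\to0$.

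The main obstacle, as in the earlier theorems, is verifying that $\omega_{5}(\tau_{j},x)$ stays \emph{strictly positive and bounded} on $\bar\Omega$ uniformly, so that $\tau_{7},\tau_{8}$ are well defined by \eqref{f20} and the sub/supersolution inequalities are genuine. This requires controlling the $\Phi$-factor and the term $(v)^{(k+1+\lambda)/k}\tilde{L}(v)/\int_0^{v}s^{(1+\lambda)/k}\tilde{L}(s)ds$ uniformly up to $\partial\Omega$ (where $v\to 0$): one shows $\Phi\to$ a finite positive constant determined by $E_f^{0}$ (or $E_f^{-\infty}$, or $+\infty$ in case $\mathbf{(f_6)}$ forces a degenerate but still controllable limit) using $\mathbf{(f_{6})}$–$\mathbf{(f_{7})}$/$\mathbf{(f_{8})}$, and the ratio $\to k/(k+1+\lambda)$ via Karamata's theorem since $\lambda+1>-k$ here — this is precisely where $-k-1<\lambda<0$ is used, the lower bound for integrability at $0$ and, together with $\mathbf{(f_6)}$, to keep the coefficient of the rank-one term from changing sign. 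Once positivity of $\omega_5$ on $\bar\Omega$ is secured, a standard comparison/iteration argument (Perron-type, as supplied by the upper–lower solution method announced in the introduction) produces a classical $k$-convex solution $u$ trapped between $\underline u$ and $\bar u$, which is exactly the asserted two-sided estimate.
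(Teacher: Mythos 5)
Your overall architecture (construct $\bar u=\varphi(\xi(v))$, $\underline u=\varphi(\zeta(v))$, expand $S_k(D^2\cdot)$ via Lemma~\ref{lemma2.3}, use the Karamata limit from Proposition~\ref{HUR}~$\mathbf{(iv)}$ for the $\tilde L$-ratio, and feed into the upper--lower solution method of Lemma~\ref{Lemma4.1}) is exactly the paper's, but there is a genuine gap in the one step that makes this theorem harder than Theorems~\ref{thm2.3}--\ref{thm2.5}: the existence of $\tau_7,\tau_8$ satisfying \eqref{f20}.

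You assert that $\Phi\to$ a finite positive constant (or $+\infty$ under $\mathbf{(f_6)}$), and you then treat positivity and boundedness of $\omega_5$ on $\bar\Omega$ as a side condition to be ``secured.'' But $\mathbf{(f_6)}$ gives the opposite: by Lemma~\ref{lemma29}~$\mathbf{(ii)}$, $\Phi(t)\to 0$ as $t\to0^+$, so $\inf_{t>0}\Phi(t)=0$. Consequently, for an interior point $x_0$ where the rank-one correction $\sum_i(-1)^k\det(v_{x_{is}x_{ij}})(-\nabla v_i)^TB(v_i)\nabla v_i$ vanishes, one has $\omega_5(\tau,x_0)=(k/(k+1))^k\Phi\big(\tau(\int_0^{v(x_0)}\cdots)^{k/(k+1)}\big)v(x_0)(-1)^kS_k(D^2v(x_0))\to 0$ as $\tau\to0^+$, so $\min_{x\in\bar\Omega}\omega_5(\tau,x)$ is \emph{not} bounded below uniformly in $\tau$. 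The uniform two-sided bound that immediately yields $\tau_j$ in Theorems~\ref{thm2.3}--\ref{thm2.5} (which there rests on $h_0=\inf\Psi>0$) simply fails here, and your appeal to ``the existence theorem of zero point of continuous function'' is unjustified as stated.

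The paper closes this gap by a two-range intermediate-value argument: on $(\tau,x)\in(0,1]\times\Omega$ it bounds $\omega_5$ from above (using $\sup\Phi<\infty$ and the positivity forced by \eqref{3f12}, which is where $\lambda<0$ is used), hence $\underline\tau^{k+1}\min_x\omega_5(\underline\tau,x)<b_2$ for some small $\underline\tau$; on $(\tau,x)\in[1,+\infty)\times\bar\Omega$ it observes that the argument of $\Phi$ is bounded away from $0$ on $\Omega\setminus\Omega_{\delta_1}$, so $\Phi$ is bounded below there, while on $\Omega_{\delta_1}$ the rank-one term is strictly positive with a strictly positive coefficient, giving $\inf_{[1,\infty)\times\bar\Omega}\omega_5>0$ and hence some $\overline\tau$ with $\overline\tau^{k+1}\min_x\omega_5(\overline\tau,x)>b_2$; continuity (Lemma~\ref{Lemma4.2}) then delivers $\tau_8$, and similarly $\tau_7$. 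Also a small slip: the defining ODE is $\varphi'(t)=-\big((k+1)F(\varphi(t))\big)^{1/(k+1)}$ (and $(-\varphi')^{k-1}\varphi''=f(\varphi)$), not $\varphi'=-\big((k+1)F(\varphi)\big)^{-1/(k+1)}$ as you wrote.
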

\subsection{The exact boundary behavior}
Let $\Lambda$ denote the set of all positive monotonic functions
$\theta\in C^{1}(0, \delta_{0})\cap L_{1}(0, \delta_{0})$ which
satisfy
\[
\lim_{t\rightarrow0^{+}}\frac{d}{dt}\bigg(\frac{\Theta(t)}{\theta(t)}\bigg):=D_{\theta}\in
[0, +\infty),\,\Theta(t)=\int_{0}^{t}f(s)ds.
\]
The set $\Lambda$ was first introduced by C\^{i}rstea and
R\u{a}dulescu \cite{HCirstea1}-\cite{HCirstea2} for non-decreasing
functions and by Mohammed \cite{HMohammedQY} for non-increasing
functions to study the boundary behavior and uniqueness of solutions
for boundary blow-up elliptic problems.

Our result can be summarized as follows.
\begin{theorem}\label{thm2.8}
Let $f$ satisfy $\mathbf{(S_{1})}$ $(\mbox{or } \mathbf{(S_{01})}
)$, $\mathbf{(f_{1})}$-$\mathbf{(f_{2})}$, $b$ satisfy
$\mathbf{(b_{1})}$ and the following condition
\begin{description}
\item[$\mathbf{(b_{5})}$] there exist constants $b_{1}<b_{2}$ and some
function $\theta\in\Lambda$ such that
\[
b_{1}:=\liminf_{d(x)\rightarrow0}\frac{b(x)}{\theta^{k+1}(d(x))}\leq
\limsup_{d(x)\rightarrow0}\frac{b(x)}{\theta^{k+1}(d(x))}=:b_{2}.
\]
\end{description}
If we further assume that
\[
C_{f}^{+\infty}>1
\]
or
\[
C_{f}^{+\infty}=1 \mbox{ and }D_{\theta}>0,
\]
then any classical $k$-convex solution $u$ to problem \eqref{M}
satisfies
\[
\tau_{10}^{1-C_{f}^{+\infty}}\leq\liminf_{d(x)\rightarrow0}\frac{u(x)}{\psi\big((\Theta(d(x)))^{\frac{k+1}{k}}\big)}\leq\limsup_{d(x)\rightarrow0}\frac{u(x)}{\psi\big((\Theta(d(x)))^{\frac{k+1}{k}}\big)}\leq\tau_{9}^{1-C_{f}^{+\infty}},
\]
where
\begin{equation}\label{3f27}
\tau_{9}=\frac{k}{k+1}\bigg(\frac{b_{1}k}{((k+1)(C_{f}^{+\infty}-1)+kD_{\theta})M_{k-1}^{+}}\bigg)^{1/k}
\end{equation}
and
\[
\tau_{10}=\frac{k}{k+1}\bigg(\frac{b_{2}k}{((k+1)(C_{f}^{+\infty}-1)+kD_{\theta})M_{k-1}^{-}}\bigg)^{1/k}
\]
with
\[
M_{k-1}^{+}=\max_{x\in\partial\Omega}S_{k-1}(\kappa_{1},\cdot\cdot\cdot,\kappa_{N-1})
\mbox{ and
}M_{k-1}^{-}=\min_{x\in\partial\Omega}S_{k-1}(\kappa_{1},\cdot\cdot\cdot,\kappa_{N-1}).
\]
In particular, if $C_{f}^{+\infty}=1$, then
\[
\lim_{d(x)\rightarrow0}\frac{u(x)}{\psi\big((\Theta(d(x)))^{\frac{k+1}{k}}\big)}=1.
\]
\end{theorem}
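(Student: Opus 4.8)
The idea is to trap any large solution $u$, on a one–sided neighbourhood of $\partial\Omega$, between the functions $\psi\big(\xi^{\pm}(\Theta(d(x)))^{(k+1)/k}\big)$, where $d(\cdot)=\mathrm{dist}(\cdot,\partial\Omega)$ and $\xi^{\pm}$ will be seen to converge to the constants encoded in $\tau_9,\tau_{10}$ as the comparison is tightened, and then to transfer this trap to the stated $\liminf$/$\limsup$ by the regular variation of $\psi$ at $0$. I would first record the analytic facts about $\psi$. From \eqref{f22}, $\psi$ is decreasing, $\psi'(t)=-(f(\psi(t)))^{1/k}$, and $\psi(t)\to+\infty$ as $t\to0^{+}$ (as noted immediately after the definition of $\psi$). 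One more differentiation gives $\psi''(t)=-((f)^{1/k})'(\psi(t))\,\psi'(t)$, hence $\Psi(t):=-t\psi''(t)/\psi'(t)=t\,((f)^{1/k})'(\psi(t))=I(\psi(t))$, so $\mathbf{(f_{2})}$ forces $\Psi(t)\to C_f^{+\infty}$ as $t\to0^{+}$. Consequently $-\psi'$ is regularly varying at $0$ of index $-C_f^{+\infty}$, and, since $C_f^{+\infty}\ge1$, integration (Karamata) shows $\psi$ is regularly varying at $0$ of index $1-C_f^{+\infty}$; in particular
\[
\lim_{t\to0^{+}}\frac{\psi(\ell t)}{\psi(t)}=\ell^{\,1-C_f^{+\infty}}\qquad\text{for every }\ell>0 .
\]
This identity is the engine that will turn the sandwich into the claimed bounds.

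Next I would compute the operator on the test profiles. Since $\Omega$ is smooth, bounded and strictly convex, $d\in C^{2}$ on a strip $\Omega_{\delta}=\{0<d<\delta\}$, $|\nabla d|=1$, and the eigenvalues of $D^{2}d$ at $x$ are $0$ along $\nabla d$ and $-\kappa_i/(1-\kappa_i d)$ in the principal tangential directions, with $\kappa_i=\kappa_i(\bar x)>0$ the principal curvatures at the nearest boundary point $\bar x$. For $w=g(d)$ with $g$ decreasing and convex, $D^{2}w=g''\,\nabla d\otimes\nabla d+g'\,D^{2}d$ has all eigenvalues nonnegative, so $w$ is $k$-convex and
\[
S_{k}(D^{2}w)=g''(-g')^{k-1}S_{k-1}\!\Big(\tfrac{\kappa_1}{1-\kappa_1 d},\dots,\tfrac{\kappa_{N-1}}{1-\kappa_{N-1} d}\Big)+(-g')^{k}S_{k}\!\Big(\tfrac{\kappa_1}{1-\kappa_1 d},\dots,\tfrac{\kappa_{N-1}}{1-\kappa_{N-1} d}\Big).
\]
Substituting $g(t)=\psi\big(\xi(\Theta(t))^{(k+1)/k}\big)$, writing $T=\xi(\Theta(d))^{(k+1)/k}$, and using $\psi'(T)=-(f(g))^{1/k}$, $\psi''(T)=-\Psi(T)\psi'(T)/T$ and $D_\theta=\lim_{t\to0^{+}}\big(1-\Theta(t)\theta'(t)/\theta(t)^{2}\big)$, a direct computation of $g'$ and $g''$ gives, as $d\to0^{+}$ and uniformly in $\bar x$,
\[
S_{k}(D^{2}w)=\xi^{k}\Big(\tfrac{k+1}{k}\Big)^{k}\Big[\tfrac{k+1}{k}\big(C_f^{+\infty}-1\big)+D_\theta\Big]\,\theta^{k+1}(d)\,S_{k-1}(\kappa_1,\dots,\kappa_{N-1})\,f(g)\,\big(1+o(1)\big),
\]
the term $(-g')^{k}S_{k}(\cdots)$ being of lower order because $\Theta(d)/\theta(d)\to0$ for $\theta\in\Lambda$; the hypothesis $C_f^{+\infty}>1$, or $C_f^{+\infty}=1$ with $D_\theta>0$, makes the bracket strictly positive.

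Then I would build the barriers and invoke comparison. Fix $\epsilon>0$; by $\mathbf{(b_{5})}$ there is $\delta>0$ with $(b_1-\epsilon)\theta^{k+1}(d)\le b(x)\le(b_2+\epsilon)\theta^{k+1}(d)$ on $\Omega_\delta$ and with the $o(1)$ above negligible there. Let $\xi_\epsilon^{+}$ be the largest $\xi$ with $\xi^{k}\big(\tfrac{k+1}{k}\big)^{k}\big[\tfrac{k+1}{k}(C_f^{+\infty}-1)+D_\theta\big]M_{k-1}^{+}\le b_1-\epsilon$; then $\bar u_\epsilon:=\psi\big(\xi_\epsilon^{+}(\Theta(d))^{(k+1)/k}\big)$ obeys $S_k(D^{2}\bar u_\epsilon)\le b(x)f(\bar u_\epsilon)$ on $\Omega_\delta$, and, with $\xi_\epsilon^{-}$ the smallest $\xi$ with $\xi^{k}\big(\tfrac{k+1}{k}\big)^{k}\big[\cdots\big]M_{k-1}^{-}\ge b_2+\epsilon$, the function $\underline u_\epsilon:=\psi\big(\xi_\epsilon^{-}(\Theta(d))^{(k+1)/k}\big)$ obeys $S_k(D^{2}\underline u_\epsilon)\ge b(x)f(\underline u_\epsilon)$ on $\Omega_\delta$. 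To make the comparison with $u$ valid up to $\partial\Omega$ I would use the shift $d\mapsto d\mp\sigma$: $\psi\big(\xi_\epsilon^{+}(\Theta(d-\sigma))^{(k+1)/k}\big)+C$ is still a super-solution on $\{\sigma<d<\delta\}$, blows up on $\{d=\sigma\}$ where $u$ is finite, and exceeds $u$ on $\{d=\delta\}$ for $C$ large, so the comparison principle for $S_k$ (with $f$ increasing and $u$ admissible) gives $u\le\psi\big(\xi_\epsilon^{+}(\Theta(d-\sigma))^{(k+1)/k}\big)+C$ there; letting $\sigma\to0^{+}$, $u(x)\le\psi\big(\xi_\epsilon^{+}(\Theta(d(x)))^{(k+1)/k}\big)+C$ near $\partial\Omega$. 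Symmetrically $\psi\big(\xi_\epsilon^{-}(\Theta(d+\sigma))^{(k+1)/k}\big)-C$ is a bounded sub-solution on $\Omega_\delta$ lying below $u$ on $\partial\Omega_\delta$, and $\sigma\to0^{+}$ gives $u(x)\ge\psi\big(\xi_\epsilon^{-}(\Theta(d(x)))^{(k+1)/k}\big)-C$. Dividing by $\psi\big((\Theta(d(x)))^{(k+1)/k}\big)\to+\infty$ and using Step 1,
\[
\limsup_{d(x)\to0}\frac{u(x)}{\psi\big((\Theta(d(x)))^{\frac{k+1}{k}}\big)}\le(\xi_\epsilon^{+})^{1-C_f^{+\infty}},\qquad \liminf_{d(x)\to0}\frac{u(x)}{\psi\big((\Theta(d(x)))^{\frac{k+1}{k}}\big)}\ge(\xi_\epsilon^{-})^{1-C_f^{+\infty}};
\]
since $(k+1)(C_f^{+\infty}-1)+kD_\theta=k\big[\tfrac{k+1}{k}(C_f^{+\infty}-1)+D_\theta\big]$, letting $\epsilon\to0$ gives $\xi_\epsilon^{+}\to\tau_9$ of \eqref{3f27} and $\xi_\epsilon^{-}\to\tau_{10}$, which is the assertion; for $C_f^{+\infty}=1$ the exponent vanishes and both quotients tend to $1$.

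The heart of the matter is the computation in the second step: carrying through $g',g''$ for $g=\psi\big(\xi(\Theta(d))^{(k+1)/k}\big)$ and extracting precisely the combination $\tfrac{k+1}{k}(C_f^{+\infty}-1)+D_\theta$, while checking that the curvature factors $S_j\big(\kappa_i/(1-\kappa_i d)\big)$ and all remainders are genuinely lower order, uniformly in the foot point $\bar x$ — this is where one uses $\Psi(t)\to C_f^{+\infty}$ together with $\Theta(t)/\theta(t)\to0$, a structural feature of the class $\Lambda$. The secondary difficulty is legitimising the comparison up to $\partial\Omega$: the $d\mapsto d\mp\sigma$ device handles it, but one must verify that the shifted profiles remain super-/sub-solutions on the whole strip, which is exactly why $\xi_\epsilon^{\pm}$ are defined with strict inequalities that leave room to absorb the perturbation.
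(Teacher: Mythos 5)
Your overall strategy mirrors the paper's almost exactly: you use the same barrier profiles $\psi\bigl(\xi(\Theta(d))^{(k+1)/k}\bigr)$, the same $S_k$ asymptotics that isolate the combination $\tfrac{k+1}{k}(C_f^{+\infty}-1)+D_\theta$ (with the $S_k$-curvature term dropping out since $\Theta/\theta\to0$ for $\theta\in\Lambda$), the same $\varepsilon$-tightened constants, a shifted comparison, and the normalized regular (or slow) variation of $\psi$ at $0^+$ with index $1-C_f^{+\infty}$ to pass from the sandwich to the stated bounds. All of that is correct and matches the computation in the paper's Section 7.

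There is, however, one genuine gap, precisely at the point you flag but wave away. You propose to use the shift $d\mapsto d\mp\sigma$ in all cases and assert that the shifted profiles remain barriers ``because $\xi_\epsilon^\pm$ are defined with strict inequalities that leave room to absorb the perturbation.'' That is wrong when $\theta$ is \emph{non-increasing}. For the shifted supersolution $\psi\bigl(\xi(\Theta(d-\sigma))^{(k+1)/k}\bigr)$, the leading term of $S_k$ carries the factor $\theta^{k+1}(d-\sigma)$, whereas condition $\mathbf{(b_5)}$ only controls $b(x)$ through $\theta^{k+1}(d)$. The supersolution inequality then requires control of $\theta^{k+1}(d-\sigma)/\theta^{k+1}(d)$, and for non-increasing $\theta$ (e.g.\ $\theta(t)=t^{-\beta}$, $0<\beta<1$) this ratio blows up as $d\to\sigma^{+}$ — a multiplicative error that no fixed $\varepsilon$-margin in $\xi_\epsilon^\pm$ can absorb. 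The same difficulty appears for the shifted subsolution near $\partial\Omega$. The paper circumvents this by splitting into Case 1 ($\theta$ non-increasing) and Case 2 ($\theta$ non-decreasing): in Case 1 it shifts the \emph{primitive}, using $\Theta(d)\mp\Theta(\sigma)$ rather than $d\mp\sigma$, so that the chain rule still produces $\theta(d)$ in the $S_k$ expansion and the comparison with $b(x)/\theta^{k+1}(d)$ goes through unchanged. Your $d$-shift is exactly the paper's Case 2 device and is only valid there, thanks to the monotonicity inequality $\theta(d-\sigma)\le\theta(d)$. To close the gap you should replace the $d$-shift by the $\Theta$-shift (or, equivalently, adopt the paper's two-case treatment). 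Everything else in your sketch — the regular variation step, the identification of $\tau_9,\tau_{10}$ from the limit of the $\xi_\epsilon^\pm$, the handling of $C_f^{+\infty}=1$ — is consistent with the paper's argument.
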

\section{Some preliminary results}
In this section, we collect some well-known results for the
convenience of later utilization and reference.
\begin{lemma}\label{lemma2.1}$($$\mbox{Lemma } 2.1 \mbox{ in } \cite{Prof.Jian})$
Suppose that $\Omega\subset \mathbb{R}^{N}$ is a bounded domain, and
$u,\,v\in C^{2}(\Omega)$ are $k$-convex. If
\begin{description}
\item[$\mathbf{(i)}$]$\phi_{1}(x, z, q)\geq \phi_{2}(x, z,
q),\,\forall\,(x, z, q)\in
\Omega\times\mathbb{R}\times\mathbb{R}^{N}$;
\item[$\mathbf{(ii)}$]$S_{k}(D^{2}u)\geq \phi_{1}(x, u, Du)$ and $S_{k}(D^{2}v)\leq \phi_{2}(x, v,
Dv)$ in $\Omega$;
\item[$\mathbf{(iii)}$] $u\leq v$ on $\partial\Omega$;
\item[$\mathbf{(iv)}$] $\partial_{z}\phi_{1}(x, z, p)>0$ or $\partial_{z}\phi_{2}(x, z,
p)>0$,
\end{description}
then $u\leq v$ in $\Omega$.
\end{lemma}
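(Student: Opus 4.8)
The plan is a standard comparison argument by contradiction, whose only nonlinear ingredient is the degenerate ellipticity of $S_k$ on the $k$-convex cone. Suppose the conclusion fails, so $M:=\sup_{\Omega}(u-v)>0$. Since $\Omega$ is bounded and $w:=u-v$ is continuous on $\Omega$, I would choose $x_j\in\Omega$ with $w(x_j)\to M$ and, after passing to a subsequence, $x_j\to x_0\in\bar{\Omega}$. Reading $\mathbf{(iii)}$ as $\limsup_{x\to\partial\Omega}(u-v)\le 0$, the limit point cannot lie on $\partial\Omega$, so $x_0\in\Omega$ and $w(x_0)=M$ by continuity; in particular $u(x_0)>v(x_0)$.

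Since $x_0$ is an interior maximum of $w$, the first- and second-order conditions give $Du(x_0)=Dv(x_0)=:p$ and $D^{2}u(x_0)\le D^{2}v(x_0)$ as symmetric matrices. Both Hessians have eigenvalue vectors in $\bar{\Gamma}_{k}$, because $u$ and $v$ are $k$-convex. The key step is the monotonicity $S_k(D^{2}u(x_0))\le S_k(D^{2}v(x_0))$: setting $g(t)=S_k\big((1-t)D^{2}u(x_0)+tD^{2}v(x_0)\big)$ for $t\in[0,1]$, the segment remains in the cone of symmetric matrices with eigenvalues in $\bar{\Gamma}_{k}$, which is convex, and $g'(t)$ is the pairing of the matrix $\big(\partial S_k/\partial a_{ij}\big)$ — positive semidefinite on that cone — with the positive semidefinite matrix $D^{2}v(x_0)-D^{2}u(x_0)$, so $g(1)\ge g(0)$.

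Chaining the hypotheses at $x_0$: by $\mathbf{(ii)}$ and $Du(x_0)=Dv(x_0)=p$,
\[
\phi_1(x_0,u(x_0),p)\le S_k(D^{2}u(x_0))\le S_k(D^{2}v(x_0))\le \phi_2(x_0,v(x_0),p).
\]
If $\partial_z\phi_1>0$, then $\mathbf{(i)}$ gives $\phi_2(x_0,v(x_0),p)\le\phi_1(x_0,v(x_0),p)$, hence $\phi_1(x_0,u(x_0),p)\le\phi_1(x_0,v(x_0),p)$, which by the mean value theorem together with $u(x_0)>v(x_0)$ contradicts $\partial_z\phi_1>0$. If instead $\partial_z\phi_2>0$, then $\mathbf{(i)}$ applied at the left-hand end gives $\phi_2(x_0,u(x_0),p)\le\phi_1(x_0,u(x_0),p)\le\phi_2(x_0,v(x_0),p)$, which again contradicts strict monotonicity of $\phi_2$ in $z$. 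In either case $M>0$ is impossible, so $u\le v$ in $\Omega$.

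I expect the one genuinely delicate point to be the monotonicity $S_k(D^{2}u(x_0))\le S_k(D^{2}v(x_0))$, which must be invoked in the form valid on the closed cone $\bar{\Gamma}_{k}$ and rests on the positive semidefiniteness of $\big(\partial S_k/\partial a_{ij}\big)$ there and on the convexity of the associated matrix cone (cf. \cite{C-N-S2}); the remaining steps follow the scheme of \cite{Prof.Jian} and are routine. It is worth noting that no boundary regularity of $u,v$ is needed — only the $\limsup$ form of $\mathbf{(iii)}$ enters — and that $\mathbf{(iv)}$ is used only at $x_0$, along the segment joining $v(x_0)$ to $u(x_0)$.
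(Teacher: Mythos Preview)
The paper does not supply its own proof of this lemma; it is quoted verbatim from \cite{Prof.Jian} as a preliminary result, so there is no in-paper argument to compare against. Your argument is the standard one for comparison principles of $k$-Hessian type and is correct: the only substantive step is the monotonicity $S_k(D^{2}u(x_0))\le S_k(D^{2}v(x_0))$ when $D^{2}u(x_0)\le D^{2}v(x_0)$ with both matrices in the closure of the $k$-convex cone, and you invoke exactly the right ingredients (convexity of the matrix cone and positive semidefiniteness of $(\partial S_k/\partial a_{ij})$ on $\bar\Gamma_k$, as in \cite{C-N-S}). The subsequent chain of inequalities and the two cases of $\mathbf{(iv)}$ are handled cleanly. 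One small remark: in the applications within this paper the lemma is always used on subdomains where $u,v\in C(\bar\Omega)$, so your $\limsup$ reading of $\mathbf{(iii)}$ is harmless and in fact slightly more general than what is needed.
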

The following interior estimate for derivatives of smooth solutions
is a simple variant of Lemma 2.2 in \cite{HLazer-McKenna} and can be
proved by the idea of Theorem 3.1 and Remark 3.1 of \cite{WangXJIA}.
\begin{lemma}\label{lemma2.5}
Let $\Omega$ be a bounded $(k-1)$-convex domain in $\mathbb{R}^{N}$
with $N\geq 2$ and $\partial\Omega\in C^{\infty}$. Let $\beta\in
[-\infty, \infty)$ and $h\in C^{\infty}(\bar{\Omega}\times(\beta,
\infty))$ with $h(x, u)>0$ for $(x, u)\in \bar{\Omega}\times(\beta,
\infty)$. Let $u\in C^{\infty}(\bar{\Omega})$ be a solution of the
following problem
\[
S_{k}(D^{2}u)=h(x,
u),\,x\in\Omega,\,u|_{\partial\Omega}=c=\mbox{constant}
\]
with $\beta<u(x)<c$ in $\Omega$. Let $D\Subset\Omega$ be a subdomain
of $\Omega$ and assume that $\beta<\beta_{1}\leq u(x)\leq \beta_{2}$
for $x\in \bar{D}$ and let $\tau>1$ be an integer. Then there exists
a constant $C$ which depends only on $\beta_{1}, \beta_{2}$ and
$\tau$  bounds for the derivatives of $h(x, u)$ for $(x, u)\in
\bar{D}\times[\beta_{1}, \beta_{2}]$ and $\mbox{dist}(D,
\partial\Omega)$ such that
\[
||u||_{C^{\tau}(\bar{D})}\leq C.
\]
\end{lemma}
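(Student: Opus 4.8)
Since $u\in C^{\infty}(\bar\Omega)$ is already a smooth solution, the content of the lemma is the \emph{quantitative} dependence of the $C^{\tau}$‑norm, so I would prove it as an interior a priori estimate via the classical three‑step scheme for a concave elliptic fully nonlinear equation. First I would rewrite the equation, on the $k$‑convex set, as $G(D^{2}u):=\log S_{k}(D^{2}u)=\log h(x,u)$, and record the three structural facts that drive everything: on the G\aa rding cone $\Gamma_{k}$ the operator $G$ is monotone in the matrix order (ellipticity) and \emph{concave} (since $S_{k}^{1/k}$ is concave there), its linearization $G^{ij}=\partial_{r_{ij}}\log S_{k}(D^{2}u)=S_{k}^{-1}\partial_{r_{ij}}S_{k}(D^{2}u)$ is positive definite, and $h>0$ forces $S_{k}(D^{2}u)\geq\delta_{0}^{k}:=\min_{\bar D\times[\beta_{1},\beta_{2}]}h>0$ on $\bar D$. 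The whole argument is local, carried out on cutoff regions whose collars have width comparable to $\mathrm{dist}(D,\partial\Omega)$, which is why that distance — and not $c$, nor a global bound on $u$ — enters the constants; the $(k-1)$‑convexity of $\Omega$ and the datum $u|_{\partial\Omega}=c$ would be invoked only to know that such a smooth solution with $D^{2}u\in\bar\Gamma_{k}$ exists.

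The first and main step would be the interior gradient and second‑derivative bounds. Differentiating $G(D^{2}u)=\log h(x,u)$ once in a direction $e$ gives $G^{ij}\partial_{ij}(\partial_{e}u)=\partial_{e}\log h(x,u)$; since $h$ does not involve $\nabla u$, a Bernstein‑type maximum‑principle argument then yields $\sup_{D'}|\nabla u|\leq C_{1}$ with $C_{1}$ depending only on $\beta_{1},\beta_{2}$, a $C^{1}$‑bound for $h$, and $\mathrm{dist}(D,\partial\Omega)$. Differentiating twice and using the concavity of $G$ — which makes $G^{ij,kl}\partial_{ij}(\partial_{e}u)\partial_{kl}(\partial_{e}u)\leq 0$ — produces the differential inequality $G^{ij}\partial_{ij}(\partial_{ee}u)\geq\partial_{ee}\big(\log h(x,u)\big)$; I would then apply the maximum principle to an auxiliary function of the form $w=\zeta^{2}(x)\partial_{ee}u+\kappa|\nabla u|^{2}$, with $\zeta$ a cutoff and $\kappa$ large enough to absorb the lower‑order terms (using here the positive lower bound $S_{k}(D^{2}u)\geq\delta_{0}^{k}$), to obtain $\sup_{D'}|D^{2}u|\leq C_{2}$, $C_{2}$ depending only on $\beta_{1},\beta_{2}$, a $C^{2}$‑bound for $h$ on $\bar D\times[\beta_{1},\beta_{2}]$, $\delta_{0}$, and $\mathrm{dist}(D,\partial\Omega)$. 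This is exactly the estimate carried out in Theorem~3.1 and Remark~3.1 of \cite{WangXJIA} (the $S_{k}$‑analogue of Lemma~2.2 of \cite{HLazer-McKenna}), and I expect it to be the one genuinely delicate point.

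Once $|D^{2}u|\leq C_{2}$ and $S_{k}(D^{2}u)\geq\delta_{0}^{k}>0$, the Hessian $D^{2}u(x)$ stays in a fixed compact subset $\mathcal K$ of the \emph{open} cone $\Gamma_{k}$ (bounded above, and bounded away from $\partial\Gamma_{k}$ by the lower bound on $S_{k}$), so the eigenvalues of $\big(\partial_{r_{ij}}S_{k}(D^{2}u)\big)$ lie in a fixed positive interval: the equation is uniformly elliptic on the relevant subdomain. With $G=\log S_{k}$ concave and $\log h(\cdot,u(\cdot))$ bounded and smooth, the interior Evans--Krylov theorem would then give a $C^{2,\alpha}$ estimate, $\|u\|_{C^{2,\alpha}}\leq C_{2,\alpha}$ on a smaller subdomain, with $\alpha$ and $C_{2,\alpha}$ depending only on the data above. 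Finally I would bootstrap by Schauder: differentiating $S_{k}(D^{2}u)=h(x,u)$ yields the \emph{linear} uniformly elliptic equation $a^{ij}(x)\partial_{ij}(\partial_{e}u)=h_{x_{e}}(x,u)+h_{u}(x,u)\partial_{e}u$ with $a^{ij}=\partial_{r_{ij}}S_{k}(D^{2}u)\in C^{\alpha}$ and $C^{\alpha}$ right‑hand side, whence $u\in C^{3,\alpha}$; iterating, at the $m$‑th step one differentiates $m$ times, the coefficients and inhomogeneity gain $C^{m-1,\alpha}$ regularity, and Schauder promotes $u$ from $C^{m+1,\alpha}$ to $C^{m+2,\alpha}$ at the cost of one further derivative of $h$. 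After finitely many steps this gives $\|u\|_{C^{\tau}(\bar D)}\leq C$ with $C$ depending only on $\beta_{1},\beta_{2}$, bounds for the derivatives of $h$ on $\bar D\times[\beta_{1},\beta_{2}]$ up to the order required by $\tau$, and $\mathrm{dist}(D,\partial\Omega)$. The main obstacle is thus the interior second‑derivative bound of Step~1; Steps~2--3 are the routine Evans--Krylov and Schauder machinery, and the proof would follow \cite{WangXJIA} and \cite{HLazer-McKenna} with $\det$ replaced by $S_{k}$.
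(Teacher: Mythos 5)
The paper does not give its own proof of Lemma \ref{lemma2.5}; it simply remarks that the estimate is a variant of Lemma 2.2 of Lazer--McKenna \cite{HLazer-McKenna} and follows from the ideas of Theorem 3.1 and Remark 3.1 of Wang \cite{WangXJIA}, which is exactly the interior Pogorelov-type $C^{2}$ estimate, Evans--Krylov, and Schauder bootstrap you lay out. Your proposal is correct and is essentially the same argument the paper intends to invoke by citation.
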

\begin{lemma}\label{lemma2.6} $(\mbox{Theorem }1.1 \mbox{ in } \cite{GuangBo}\mbox{ and }\cite{Salani})$
Let $\Omega$ be an open domain in $\mathbb{R}^{N}$ with
$C^{\infty}$-boundary and let $h(x, t)$ be a $C^{\infty}$-function
such that and $h>0$ and $h_{t}\geq0$ in $\Omega\times\mathbb{R}$.
Then the following problem
\[
S_{k}(D^{2}u)=h(x,
u),\,x\in\Omega,\,u|_{\partial\Omega}=\tilde{\phi}\in
C(\partial\Omega)
\]
has a unique $k$-convex solution provided  there exists a $k$-convex
strict subsolution $v$ in $\bar{\Omega}$, i.e., a $k$-convex
function $v$ such that $v|_{\partial\Omega}=\tilde{\phi}$ and
$S_{k}(D^{2}v)\geq h(x, v)+\delta$ in $\bar{\Omega}$, for some
$\delta>0$.
\end{lemma}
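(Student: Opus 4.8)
The plan is to derive uniqueness from the comparison principle and existence from the method of continuity, letting the strict $k$-convex subsolution $v$ do double duty: it anchors the deformation at the endpoint parameter, and — this is the essential point — it supplies the boundary second-order estimate, so that no curvature hypothesis on $\partial\Omega$ is needed.

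\emph{Uniqueness.} Suppose $u_{1},u_{2}$ are two $k$-convex solutions. Write $S_{k}^{ij}$ for the derivative of $S_{k}$ with respect to the $(i,j)$ entry of its symmetric-matrix argument. Since $\Gamma_{k}$ is convex and $(S_{k}^{ij})$ is positive definite on $\Gamma_{k}$, setting
\[
S_{k}(D^{2}u_{1})-S_{k}(D^{2}u_{2})=a^{ij}(x)\,\partial_{ij}(u_{1}-u_{2}),\qquad a^{ij}(x)=\int_{0}^{1}S_{k}^{ij}\big(sD^{2}u_{1}+(1-s)D^{2}u_{2}\big)\,ds,
\]
exhibits $L:=a^{ij}\partial_{ij}$ as strictly elliptic in $\Omega$ (the two Hessians stay in a compact subset of $\Gamma_{k}$ because $S_{k}(D^{2}u_{i})=h(x,u_{i})>0$ is bounded below on $\bar{\Omega}$ and $|D^{2}u_{i}|$ is bounded). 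Likewise $h(x,u_{1})-h(x,u_{2})=c(x)(u_{1}-u_{2})$ with $c(x)=\int_{0}^{1}h_{t}(x,su_{1}+(1-s)u_{2})\,ds\ge0$, so $w:=u_{1}-u_{2}$ solves $Lw-cw=0$ in $\Omega$, $c\ge0$, $w=0$ on $\partial\Omega$, whence $w\equiv0$ by the maximum principle. (One may also invoke Lemma \ref{lemma2.1} applied to $h$ and a suitable strictly increasing perturbation and pass to the limit.)

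\emph{Existence.} For $\sigma\in[0,1]$ I would study the family
\[
S_{k}(D^{2}u)=h_{\sigma}(x,u):=\sigma h(x,u)+(1-\sigma)S_{k}(D^{2}v)\ \text{in}\ \Omega,\qquad u|_{\partial\Omega}=\tilde{\phi},
\]
noting $h_{\sigma}\in C^{\infty}$, $h_{\sigma}\ge\delta':=\min\{\delta,\min_{\bar{\Omega}}S_{k}(D^{2}v)\}>0$ (using $S_{k}(D^{2}v)\ge h(x,v)+\delta>0$), $\partial_{t}h_{\sigma}=\sigma h_{t}\ge0$, and that $v$ is the unique $k$-convex solution at $\sigma=0$ while remaining a strict subsolution of problem $(\sigma)$ for every $\sigma$. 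Let $I\subset[0,1]$ be the set of $\sigma$ for which a $k$-convex solution $u_{\sigma}\in C^{2,\alpha}(\bar{\Omega})$ exists. Openness of $I$ comes from the implicit function theorem, since the linearization $S_{k}^{ij}(D^{2}u_{\sigma})\partial_{ij}-\sigma h_{t}(x,u_{\sigma})$ is strictly elliptic with nonpositive zeroth-order term, hence invertible on the relevant Hölder spaces. For closedness one needs a priori estimates (uniform for $\sigma$ in any $[\sigma_{0},1]$, $\sigma_{0}>0$, which is enough since $0\in I$ and $I$ is open): the $C^{0}$ bound $v\le u_{\sigma}\le\bar h$, where $\bar h$ is the harmonic extension of $\tilde{\phi}$ (every $k$-convex function is subharmonic, $S_{1}=\Delta u_{\sigma}\ge0$, and $v$ is a lower barrier by comparison); the gradient bound, split into a boundary estimate via barriers built from $v$ and $\bar h$ and the interior global gradient estimate for $k$-Hessian equations with smooth positive data; and the second-derivative bound, consisting of the global $C^{2}$ estimate for $k$-Hessian equations together with the boundary $C^{2}$ estimate, which is exactly where the strict subsolution $v$ enters, in the Caffarelli–Nirenberg–Spruck-type barrier construction (cf. \cite{C-N-S2}). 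Since $S_{k}(D^{2}u_{\sigma})\ge\delta'>0$ and $|D^{2}u_{\sigma}|\le C$, Maclaurin's inequality keeps every $S_{j}(D^{2}u_{\sigma})$, $j\le k$, bounded below, so $D^{2}u_{\sigma}$ stays in a compact subset of $\Gamma_{k}$ and the equation is uniformly elliptic along the family; concavity of $S_{k}^{1/k}$ then yields, via Evans–Krylov, a uniform $C^{2,\alpha}(\bar{\Omega})$ bound, and Schauder bootstrapping gives $u_{\sigma}\in C^{\infty}(\bar{\Omega})$. Hence $I=[0,1]$, and $\sigma=1$ produces the desired $k$-convex solution, unique by the first step. (Alternatively, following the Perron route of \cite{Salani}, take $u$ to be the supremum of all $k$-convex subsolutions lying below $\tilde{\phi}$ on $\partial\Omega$, verify it is a viscosity solution by the bump construction, recover interior smoothness from the regularity theory for $k$-Hessian equations, and obtain $u=\tilde{\phi}$ on $\partial\Omega$ from $v\le u\le\bar h$.)

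\emph{Main obstacle.} The decisive ingredient is the boundary second-derivative estimate — bounding $|D^{2}u_{\sigma}|$ on $\partial\Omega$, uniformly in $\sigma$, using only the strict $k$-convex subsolution $v$ and no convexity of $\partial\Omega$. This is the heart of the Guan/Caffarelli–Nirenberg–Spruck theory; once it is available, the interior $C^{2}$ estimate, the Evans–Krylov step, and the continuity argument are routine.
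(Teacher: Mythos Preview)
The paper does not supply a proof of this lemma; it is simply quoted from Guan \cite{GuangBo} and Salani \cite{Salani} as a known result. So there is no ``paper's own proof'' to compare against. Your sketch is in the spirit of Guan's original argument: uniqueness by linearization and the maximum principle, existence by the method of continuity, with the strict subsolution replacing any geometric hypothesis on $\partial\Omega$ in the boundary $C^{2}$ estimate. You have correctly isolated the decisive step.

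One point deserves care. As stated in the lemma, $\tilde{\phi}$ is only continuous, yet your continuity-method argument runs implicitly in $C^{2,\alpha}(\bar{\Omega})$ (implicit function theorem on H\"older spaces, Evans--Krylov up to the boundary). With merely continuous boundary data the solution need not be $C^{2}$ up to $\partial\Omega$, and the boundary second-derivative barrier construction requires smoother data to begin with. The standard remedy is to first treat smooth $\tilde{\phi}$ (and a smooth approximation $v_{\epsilon}$ of the subsolution if needed), obtain the classical solution, and then pass to general continuous $\tilde{\phi}$ by approximation, using the comparison principle for stability. In the paper's application the boundary data are constants, so this subtlety is moot there, but for the lemma as written you should make the approximation step explicit.
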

\begin{lemma}\label{lemma2.3}$($$\mbox{Proposition } 2.1 \mbox{ in } \cite{HuangYong}$ $ \mbox{ and  Lemma } 2.3 \mbox{ in } \cite{ZhangFeng1})$ Let $u\in C^{2}(\Omega)$ be such that all of the
principal submatrices of $(u_{x_{i}x_{j}})$ for $x\in \Omega$, and
let $h$ be a $C^{2}$-function defined on an interval containing the
range of $u$. Then
\[
S_{k}(D^{2}h(u))=S_{k}(D^{2}u)(h'(u))^{k}+(h'(u))^{k-1}h''(u)\sum_{i=1}^{C_{N}^{k}}\mbox{det}(u_{x_{is}x_{ij}})(\nabla
u_{i})^{T}B(u_{i})\nabla u_{i},
\]
 where $A^{T}$ denotes the transpose of the matrix $A$, $B(u_{i})$
denotes the inverse of the $i$-th principal submatrix
$(u_{x_{is}x_{ij}})$, $\mbox{det}(u_{x_{is}x_{ij}})$ denotes  the
determinant of $(u_{x_{is}x_{ij}})$ and
\[
\nabla u_{i}=(u_{x_{i1}}, \cdot\cdot\cdot,
u_{x_{ik}})^{T},\,i=1,\cdot\cdot\cdot, C_{N}^{k} \mbox{ and }
C_{N}^{k}=\frac{N!}{(N-k)!k!}.
\]
\end{lemma}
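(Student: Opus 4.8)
The plan is to observe that composing $u$ with $h$ perturbs the Hessian only by a rank-one term, and then to expand $S_k$ as a sum of principal minors and treat each minor by the matrix-determinant lemma.

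First I would set $w:=h(u)$ and differentiate twice. Since $w_{x_\ell}=h'(u)u_{x_\ell}$ and $w_{x_\ell x_m}=h'(u)u_{x_\ell x_m}+h''(u)u_{x_\ell}u_{x_m}$, this gives the pointwise matrix identity
\[
D^{2}w=h'(u)\,D^{2}u+h''(u)\,\nabla u\,(\nabla u)^{T},
\]
so that $D^{2}w$ is a rank-one update of the scaled Hessian $M:=h'(u)D^{2}u$. I would then recall the classical identity that the $k$-th elementary symmetric function of the eigenvalues of a symmetric matrix equals the sum of its principal $k\times k$ minors, i.e. $S_{k}(A)=\sum_{i=1}^{C_{N}^{k}}\det(A_{i})$, where $A_{i}$ ranges over the order-$k$ principal submatrices; this follows by comparing the coefficient of $t^{N-k}$ in $\det(tI+A)=\prod_{j}(t+\lambda_{j})$ with the Laplace/Leibniz expansion of $\det(tI+A)$.

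Applying this to $A=D^{2}w$ and noting that the $i$-th principal submatrix of $D^{2}w$ is $(D^{2}w)_{i}=h'(u)\,(u_{x_{is}x_{ij}})+h''(u)\,\nabla u_{i}(\nabla u_{i})^{T}$, I would compute each minor via the rank-one determinant lemma: for any $k\times k$ matrix $P$, scalar $c$ and vector $z$,
\[
\det\!\big(P+c\,zz^{T}\big)=\det P+c\,z^{T}\operatorname{adj}(P)\,z,
\]
an identity of polynomials in the entries (valid with no sign or invertibility restriction, since $t\mapsto\det(P+tzz^{T})$ is affine). Taking $P=h'(u)(u_{x_{is}x_{ij}})$, $c=h''(u)$, $z=\nabla u_{i}$, and using the scalings $\det(cP)=c^{k}\det P$ and $\operatorname{adj}(cP)=c^{k-1}\operatorname{adj}(P)$ for $k\times k$ matrices, together with the hypothesis that the order-$k$ principal submatrices of $D^{2}u$ are nonsingular so that $\operatorname{adj}\big((u_{x_{is}x_{ij}})\big)=\det(u_{x_{is}x_{ij}})\,B(u_{i})$, I obtain
\[
\det\!\big((D^{2}w)_{i}\big)=(h'(u))^{k}\det(u_{x_{is}x_{ij}})+(h'(u))^{k-1}h''(u)\,\det(u_{x_{is}x_{ij}})\,(\nabla u_{i})^{T}B(u_{i})\nabla u_{i}.
\]

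Finally I would sum over $i=1,\dots,C_{N}^{k}$ and use $\sum_{i}\det(u_{x_{is}x_{ij}})=S_{k}(D^{2}u)$ (equivalently, the degree-$k$ homogeneity $S_{k}(h'(u)D^{2}u)=(h'(u))^{k}S_{k}(D^{2}u)$) to arrive at the stated formula. The computation is essentially bookkeeping: the only load-bearing ingredients are the principal-minor expansion of $S_{k}$ and the rank-one determinant identity, and the nonsingularity hypothesis enters only when rewriting $\operatorname{adj}$ as $\det$ times the inverse $B(u_{i})$. Accordingly I do not expect a real obstacle; the one point requiring care is keeping the powers of $h'(u)$ consistent and verifying the adjugate scaling $\operatorname{adj}(cP)=c^{k-1}\operatorname{adj}(P)$ for $k\times k$ matrices.
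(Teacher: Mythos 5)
Your proof is correct. Note that the paper does not prove this lemma itself but cites it (Proposition~2.1 of \cite{HuangYong}, Lemma~2.3 of \cite{ZhangFeng1}), so there is no internal argument to compare against; your derivation via the principal-minor expansion $S_k(A)=\sum_i\det(A_i)$, the Hessian chain rule $D^2h(u)=h'(u)D^2u+h''(u)\nabla u(\nabla u)^T$, the rank-one determinant identity $\det(P+czz^T)=\det P+c\,z^T\operatorname{adj}(P)z$, and the adjugate scaling $\operatorname{adj}(cP)=c^{k-1}\operatorname{adj}(P)$ is exactly the standard route and is complete. The only point worth flagging is that the paper's hypothesis is stated elliptically (``all of the principal submatrices of $(u_{x_ix_j})$ \dots'' with the predicate missing); you correctly read it as nonsingularity of the order-$k$ principal submatrices, which is precisely what is needed to replace $\operatorname{adj}(u_{x_{is}x_{ij}})$ by $\det(u_{x_{is}x_{ij}})\,B(u_i)$, and as you observe the identity without that replacement holds with no invertibility assumption at all.
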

For any $\delta>0$, we define
\begin{equation}\label{cj}
\Omega_{\delta}:=\{x\in\Omega:0<d(x)<\delta\}.
\end{equation}
Since $\Omega$ is smooth, for positive integer $m\geq2$, we can
always take $\tilde{\delta}>0$ such that (please refer to Lemmas 14.
16 and 14. 17 in \cite{Gilbarg})
\begin{equation}\label{gtg}
d\in C^{m}(\Omega_{\tilde{\delta}}),\,|\nabla d(x)|=1,\,\forall\,
x\in\Omega_{\tilde{\delta}}.
\end{equation}
\begin{lemma}\label{lemma2.4}$(\mbox{Corollary } 2.3 \mbox{ in }\cite{HuangYong})$
Let $\Omega$ be bounded with $\partial\Omega\in C^{m}$ for $m\geq
2$. Assume that $x\in \Omega_{\delta_{1}}$ and $\bar{x}\in
\partial\Omega$  is the nearest point to $x$, i.e.,
$d(x)=|x-\bar{x}|$, then
\[
\begin{split}
S_{k}(D^{2}h(d(x)))&=(-h'(d(x)))^{k}S_{k}(\varepsilon_{1},
\cdot\cdot\cdot,
\varepsilon_{N-1})\\
&+(-h'(d(x)))^{k-1}h''(d(x))S_{k-1}(\varepsilon_{1},
\cdot\cdot\cdot, \varepsilon_{N-1}),
\end{split}
\]
where
\[
\varepsilon_{i}=\frac{\kappa_{i}(\bar{x})}{1-\kappa_{i}(\bar{x})d(x)},\,i=1,\cdot\cdot\cdot,
N-1
\]
and $\kappa_{i}(\bar{x})$ $i=1, \cdot\cdot\cdot, N-1$ are the
principal curvatures of $\partial\Omega$ at $\bar{x}$.
\end{lemma}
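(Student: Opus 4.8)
The plan is to diagonalise $D^{2}\big(h(d(x))\big)$ in an orthonormal frame adapted to $\partial\Omega$ and then expand $S_{k}$ combinatorially by splitting off the single ``normal'' eigenvalue.

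First I would fix $\delta_{1}$ small enough (shrinking the $\tilde{\delta}$ of \eqref{gtg} if necessary) so that for every $x\in\Omega_{\delta_{1}}$ the nearest point $\bar{x}\in\partial\Omega$ is unique, $d\in C^{m}(\Omega_{\delta_{1}})$, $|\nabla d|\equiv1$ there, and $1-\kappa_{i}(\bar{x})d(x)>0$ for $i=1,\dots,N-1$; this is possible because $\partial\Omega\in C^{m}$ is compact. The chain rule then gives
\[
D^{2}\big(h(d(x))\big)=h'(d(x))\,D^{2}d(x)+h''(d(x))\,\nabla d(x)\otimes\nabla d(x).
\]

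Next I would record the eigenstructure of $D^{2}d(x)$. Differentiating the identity $|\nabla d|^{2}\equiv1$ yields $D^{2}d(x)\,\nabla d(x)=0$, so $\nabla d(x)$ is a unit null eigenvector. Invoking the classical formula for the Hessian of the distance function to a $C^{2}$ boundary (Lemma 14.17 in \cite{Gilbarg}), in the orthonormal basis $\{e_{1},\dots,e_{N-1},\nabla d(x)\}$, where $e_{1},\dots,e_{N-1}$ are the principal directions of $\partial\Omega$ at $\bar{x}$, one has
\[
D^{2}d(x)=\mbox{diag}\big(-\varepsilon_{1},\dots,-\varepsilon_{N-1},\,0\big),\qquad\varepsilon_{i}=\frac{\kappa_{i}(\bar{x})}{1-\kappa_{i}(\bar{x})d(x)}.
\]
Since in the same basis $\nabla d(x)\otimes\nabla d(x)=\mbox{diag}(0,\dots,0,1)$, combining with the chain-rule identity gives
\[
D^{2}\big(h(d(x))\big)=\mbox{diag}\big(-h'(d(x))\varepsilon_{1},\dots,-h'(d(x))\varepsilon_{N-1},\,h''(d(x))\big).
\]

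Finally, because $S_{k}$ of a symmetric matrix is the $k$-th elementary symmetric polynomial of its eigenvalues and is invariant under orthogonal conjugation, it remains to evaluate $S_{k}$ of these $N$ numbers. I would split the sum over $k$-element index sets $I\subset\{1,\dots,N\}$ according to whether $N\in I$ (with the convention that $S_{j}$ of fewer than $j$ variables is $0$): the subsets with $N\notin I$ contribute $(-h'(d(x)))^{k}S_{k}(\varepsilon_{1},\dots,\varepsilon_{N-1})$, while those with $N\in I$ contribute $h''(d(x))(-h'(d(x)))^{k-1}S_{k-1}(\varepsilon_{1},\dots,\varepsilon_{N-1})$; adding the two pieces produces exactly the asserted identity. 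The only non-elementary input is the Hessian-of-distance formula used in the second step; note that Lemma \ref{lemma2.3} applied to $u=d$ does not help directly, since $D^{2}d$ has a null direction and hence singular principal submatrices, which is precisely why the direct diagonalisation is the natural route. The sole remaining technical point is the choice of $\delta_{1}$ guaranteeing that the nearest-point projection onto $\partial\Omega$ is well defined and smooth and that the denominators $1-\kappa_{i}(\bar{x})d(x)$ stay positive; everything else is the chain rule and the additivity of $S_{k}$ with respect to peeling off one eigenvalue.
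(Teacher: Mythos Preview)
Your argument is correct and is the standard route to this identity: diagonalise $D^{2}d$ in the principal frame at $\bar{x}$ using the Gilbarg--Trudinger formula, add the rank-one term $h''\,\nabla d\otimes\nabla d$, and then split $S_{k}$ over whether the normal index is selected. Note, however, that the paper does not supply its own proof of this lemma; it is quoted verbatim as Corollary~2.3 of \cite{HuangYong}, so there is nothing in the present paper to compare against beyond the citation. Your remark that Lemma~\ref{lemma2.3} is inapplicable here because $D^{2}d$ has a zero eigenvalue (hence singular principal submatrices) is a useful observation and explains why the direct eigenvalue computation is preferred over the composition formula.
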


\section{Some basic facts from Karamata regular variation theory}
Some basic facts from Karamata regular variation theory are given in
this section, please refer to, for instance, \cite{BGT},
\cite{Maric}, \cite{Resnick} and Zhang's paper \cite{JFAZ}.
\begin{definition}\label{Def2.1} A positive continuous
function $f$ defined on $[a,+\infty)$, for some $a>0$, is called
{\bf regularly varying at positive infinity} with index $\rho$,
written $f \in RV_{\rho}$, if for each $\xi>0$ and some $\rho \in
\mathbb R$,
\begin{equation} \label{2.1}
\lim_{t\rightarrow +\infty} \frac{f(\xi t)}{f(t)}= \xi^\rho.
\end{equation}
In particular,  when $\rho=0$, $f$ is called   {\bf slowly varying
at infinity}. \noindent Clearly, if $f\in RV_\rho$, then
$\hat{L}(t):=f(t)/{t^\rho}$ is slowly varying at infinity.
\end{definition}
Similarly, we define the regularly varying functions at zero and at
infinity as follows.
\begin{definition}\label{definition4.2}
We also see that a positive continuous function $g$ defined on
$(0,a)$  for some $a>0$,  is {\bf regularly varying  at zero} with
index $\rho$ (write $g \in RVZ_\rho$) if $t\mapsto g(1/t)$ belongs
to $RV_{-\rho}$.
\end{definition}
\noindent We see from Definition \ref{definition4.2} that $g\in
RVZ_{\rho}$ $(\rho\in\mathbb{R})$, if for each $\xi>0$,
\begin{equation}\label{Definition0}
\lim_{t\rightarrow0^{+}}\frac{g(\xi t)}{g(t)}=\xi^{\rho}.
\end{equation}
\begin{definition}\label{definition4.3}
A positive continuous function $h$ defined on $(-\infty, a)$ for
some $a<0$, is {\bf regularly varying at negative infinity} with
index $\rho$ if $t\mapsto h(-t)$ is regularly varying at positive
infinity with index $\rho$.
\end{definition}
\begin{proposition}\label{ppp}  $(\mbox{Uniform Convergence
Theorem})$  If $f\in RV_\rho$, then  \eqref{2.1}  holds uniformly
for $\xi \in [c_1, c_2]$ with $0<c_1<c_2$. If $g\in RVZ_{\rho}$,
then \eqref{Definition0} holds uniformly for $\xi \in [c_1, c_2]$
with $0<c_1<c_2$.
\end{proposition}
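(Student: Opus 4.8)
The plan is to reduce both assertions to a single statement about slowly varying functions and then prove that statement by a measure‑theoretic (Steinhaus‑type) argument. First I would observe that if $f\in RV_\rho$ and we set $\hat L(t)=f(t)/t^{\rho}$, which is slowly varying by the remark following Definition \ref{Def2.1}, then $f(\xi t)/f(t)=\xi^{\rho}\,\hat L(\xi t)/\hat L(t)$; since the factor $\xi^{\rho}$ does not involve $t$, the uniform validity of \eqref{2.1} on $[c_1,c_2]$ is equivalent to $\hat L(\xi t)/\hat L(t)\to 1$ uniformly on $[c_1,c_2]$. Next, if $g\in RVZ_\rho$ then by Definition \ref{definition4.2} the function $G(s):=g(1/s)$ belongs to $RV_{-\rho}$, and the substitution $s=1/t$, $\eta=1/\xi$ turns $g(\xi t)/g(t)$ into $G((1/\xi)s)/G(s)$ with $s\to+\infty$; since $\xi\in[c_1,c_2]$ is equivalent to $1/\xi\in[1/c_2,1/c_1]$, and the intervals $[1/c_2,1/c_1]$ range over all compact subintervals of $(0,\infty)$ as the $[c_1,c_2]$ do, the uniform version of \eqref{Definition0} follows from the uniform version of \eqref{2.1}. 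Hence it suffices to prove: for every continuous positive slowly varying function $L$ on some $[a,\infty)$, $L(\xi t)/L(t)\to1$ uniformly for $\xi$ in compact subsets of $(0,\infty)$.

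For this I would pass to additive variables. Put $h(x)=\log L(e^{x})$, a continuous (hence measurable) function for large $x$; slow variation of $L$ is precisely the statement that $h(x+u)-h(x)\to0$ as $x\to+\infty$ for each fixed $u\in\mathbb{R}$. Writing $u=\log\xi$, $x=\log t$, what must be shown is that $h(x+u)-h(x)\to0$ uniformly for $u$ in an arbitrary bounded interval; a routine splitting of the shift (using that $h(x+c)-h(x)\to0$ for each fixed $c$) reduces this to the case where $u$ ranges over an interval $[0,\ell]$, and for notational simplicity I take $\ell=1$, the general $\ell$ being identical. Suppose, for contradiction, that there are $\varepsilon>0$, $x_n\to+\infty$ and $u_n\in[0,1]$ with $|h(x_n+u_n)-h(x_n)|\ge3\varepsilon$. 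Consider the measurable sets
\[
E_n=\{v\in[0,2]:|h(x_n+v)-h(x_n)|\le\varepsilon\},\qquad F_n=\{v\in[0,2]:|h(x_n+u_n+v)-h(x_n+u_n)|\le\varepsilon\}.
\]
Since $x_n\to+\infty$ and $x_n+u_n\to+\infty$, for each fixed $v$ the increments $h(x_n+v)-h(x_n)$ and $h(x_n+u_n+v)-h(x_n+u_n)$ tend to $0$, so $\chi_{E_n}(v)\to1$ and $\chi_{F_n}(v)\to1$ pointwise on $[0,2]$; by dominated convergence $|E_n|\to2$ and $|F_n|\to2$. Consequently $|(E_n-u_n)\cap[0,1]|\ge 1-|[0,2]\setminus E_n|\to1$ and $|F_n\cap[0,1]|\ge1-|[0,2]\setminus F_n|\to1$, so for all large $n$ the measures of these two subsets of $[0,1]$ sum to more than $1=|[0,1]|$ and they must intersect; pick $v_n$ in the intersection. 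Then $u_n+v_n\in E_n$ and $v_n\in F_n$, whence
\[
|h(x_n+u_n)-h(x_n)|\le|h(x_n+u_n)-h(x_n+u_n+v_n)|+|h(x_n+u_n+v_n)-h(x_n)|\le2\varepsilon,
\]
contradicting $|h(x_n+u_n)-h(x_n)|\ge3\varepsilon$. This establishes the uniform convergence for slowly varying $L$, and by the reductions of the first paragraph the proposition follows.

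I expect the only genuine obstacle to be the passage from pointwise to uniform convergence for slowly varying functions, i.e. the measure argument of the second paragraph; everything else is bookkeeping with changes of variables. As an alternative one could invoke the Karamata representation $L(t)=c(t)\exp\bigl(\int_{a}^{t}\eta(s)s^{-1}\,ds\bigr)$ with $c(t)\to c_\infty\in(0,\infty)$ and $\eta(t)\to0$: then $L(\xi t)/L(t)=\bigl(c(\xi t)/c(t)\bigr)\exp\bigl(\int_{1}^{\xi}\eta(tr)r^{-1}\,dr\bigr)$, and since $|\eta(tr)|\le\sup_{s\ge \min(1,c_1)t}|\eta(s)|\to0$ uniformly for $r\in[c_1,c_2]$ while $c(\xi t)/c(t)\to1$ uniformly in $\xi$, the uniform estimate is immediate. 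I would nonetheless keep the measure‑theoretic proof as the primary one, since it presupposes only the measurability (here, continuity) of $f$ and not the representation theorem.
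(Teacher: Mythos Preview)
Your proof is correct. The paper does not supply its own argument for this proposition: Section~4 collects ``basic facts from Karamata regular variation theory'' with a blanket reference to \cite{BGT}, \cite{Maric}, \cite{Resnick}, and states the Uniform Convergence Theorem without proof. So there is nothing in the paper to compare against beyond noting that you have filled in what the authors deliberately left to the literature.

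For the record, the reductions in your first paragraph (to slowly varying $L$, and from $RVZ_\rho$ to $RV_{-\rho}$ via $G(s)=g(1/s)$) are clean and correct. The measure-theoretic contradiction argument in your second paragraph is the standard one (essentially that of Korevaar and of Bingham--Goldie--Teugels, Theorem~1.2.1): the pointwise convergence forces $|E_n|,|F_n|\to2$ by dominated convergence, the pigeonhole in $[0,1]$ then produces the common $v_n$, and the triangle inequality yields the $2\varepsilon$ bound contradicting $3\varepsilon$. The bookkeeping on the measures of $(E_n-u_n)\cap[0,1]$ and $F_n\cap[0,1]$ is accurate since $u_n\in[0,1]$ guarantees $[0,1]\subset[-u_n,2-u_n]$. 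Your alternative route via the Representation Theorem (which the paper states as Proposition~\ref{2.2}) is also valid and in fact closer in spirit to how the paper uses these facts downstream, but you are right that the measure argument is more self-contained.
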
\label{Haan}
\begin{proposition} \label{2.2} $(\mbox{Representation Theorem})$
A function $L$ is slowly varying at positive infinity if and only if
it may be written in the form
\begin{equation*}\label{2.2}
L(t)=\nu(t) {\rm exp} \left( \int_{a_1}^t \frac {y(s)}{s} ds
\right), \
 t \geq a_1,
\end{equation*}
for some $a_1> 0$, where the functions $\nu$ and $y$ are continuous
and for $t \rightarrow +\infty$, $y(t)\rightarrow 0$ and
$\nu(t)\rightarrow c$, with $c>0$. We call that
\begin{equation*}\label{2.3}
 \hat{L}(t)=c {\rm exp} \left( \int_{a_1}^t
\frac {y(s)}{s} ds \right), \
 t \geq a_1,
\end{equation*}
  is {\bf normalized} slowly varying
 at positive infinity and
\begin{equation*}\label{2.4}
f(t)=t^\rho\hat{L}(t), \
 t \geq a_1
\end{equation*}
  is {\bf normalized} regularly varying at positive infinity with
 index $\rho$  $(\mbox{and write }f\in NRV_\rho).$
\end{proposition}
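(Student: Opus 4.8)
The plan is to prove the two implications of the equivalence separately: the ``if'' part is a direct limit computation, while the ``only if'' part is the substantive one and rests on a logarithmic change of variable together with a smoothing-by-averaging device. For sufficiency, suppose $L$ admits the stated representation on $[a_{1},+\infty)$ and fix $\xi>0$. For $t$ large enough that $t,\xi t\ge a_{1}$, the substitution $s=t\sigma$ gives
\[
\frac{L(\xi t)}{L(t)}=\frac{\nu(\xi t)}{\nu(t)}\,\exp\Big(\int_{t}^{\xi t}\frac{y(s)}{s}\,ds\Big)=\frac{\nu(\xi t)}{\nu(t)}\,\exp\Big(\int_{1}^{\xi}\frac{y(t\sigma)}{\sigma}\,d\sigma\Big).
\]
Since $\nu(t)\to c>0$ the prefactor tends to $1$, and since $y(t\sigma)\to0$ as $t\to+\infty$ while $|y|$ stays bounded near infinity the remaining integral tends to $0$; hence $L(\xi t)/L(t)\to1=\xi^{0}$ for every $\xi>0$, so $L$ is slowly varying at positive infinity.

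For necessity, assume $L$ is slowly varying at positive infinity; we may take $L$ positive and continuous on $[1,+\infty)$. Put $h(t):=\ln L(e^{t})$ for $t\ge0$; then $h$ is continuous and slow variation of $L$ is equivalent to $h(t+u)-h(t)\to0$ as $t\to+\infty$ for each fixed $u$, this convergence being uniform for $u$ in compact sets by the Uniform Convergence Theorem (Proposition~\ref{ppp}, applied with $\rho=0$ and $\xi=e^{u}$). Introduce the averaged function $H(t):=\int_{0}^{1}h(t+u)\,du=\int_{t}^{t+1}h(v)\,dv$, which is $C^{1}$ on $[0,+\infty)$ with $H'(t)=h(t+1)-h(t)$. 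The uniform convergence yields
\[
|H(t)-h(t)|\le\sup_{0\le u\le1}|h(t+u)-h(t)|\to0\quad\text{and}\quad H'(t)\to0\qquad(t\to+\infty).
\]

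Now fix $t_{0}\ge0$ and use the identity $h(t)=\big(h(t)-H(t)\big)+H(t_{0})+\int_{t_{0}}^{t}H'(s)\,ds$ for $t\ge t_{0}$. Returning to the variable $x=e^{t}$, setting $a_{1}:=e^{t_{0}}$, and substituting $s=\ln\sigma$ in the integral, define
\[
y(\sigma):=H'(\ln\sigma)=\ln\frac{L(e\sigma)}{L(\sigma)},\qquad \nu(x):=\exp\!\big(h(\ln x)-H(\ln x)+H(t_{0})\big);
\]
then $y$ is continuous on $[a_{1},+\infty)$ with $y(\sigma)\to0$, while $\nu$ is continuous and positive with $\nu(x)\to e^{H(t_{0})}=:c>0$ as $x\to+\infty$. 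Exponentiating the identity and using $\int_{t_{0}}^{\ln x}H'(s)\,ds=\int_{a_{1}}^{x}y(\sigma)\sigma^{-1}\,d\sigma$ gives
\[
L(x)=\nu(x)\,\exp\Big(\int_{a_{1}}^{x}\frac{y(\sigma)}{\sigma}\,d\sigma\Big),\qquad x\ge a_{1},
\]
which is the asserted representation; the normalized versions result from replacing $\nu$ by its limit $c$, and the statement about $NRV_{\rho}$ is then immediate on multiplying by $t^{\rho}$. The main obstacle is the necessity direction, and within it the only genuinely delicate point is the passage from pointwise to uniform convergence of $h(t+u)-h(t)$; granting Proposition~\ref{ppp} for that step, the remaining idea is the averaging trick producing a $C^{1}$ companion $H$ of $h$ whose derivative is precisely the sought density $y$, so I expect no real difficulty beyond bookkeeping with the change of variables.
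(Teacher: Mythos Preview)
Your proof is correct and complete: the sufficiency direction is a clean dominated-convergence argument, and for necessity the logarithmic substitution $h(t)=\ln L(e^{t})$ together with the averaging $H(t)=\int_{t}^{t+1}h(v)\,dv$ is exactly the standard device that produces a $C^{1}$ companion with $H'(t)=h(t+1)-h(t)\to0$, from which the representation follows by unwinding the change of variables. The only external input is the Uniform Convergence Theorem (Proposition~\ref{ppp}), which you invoke appropriately.

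However, there is nothing to compare against: the paper does not prove Proposition~\ref{2.2} at all. It is listed in Section~4 as one of several ``basic facts from Karamata regular variation theory'' with a blanket citation to \cite{BGT}, \cite{Maric}, \cite{Resnick}, and \cite{JFAZ}, and no argument is given in the text. Your write-up therefore supplies a self-contained proof where the paper simply quotes the literature; the argument you give is essentially the classical one found in \cite{BGT}.
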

\begin{proposition}\label{proposition4.6}
A function $L$ is slowly varying at zero if and only if it may be
written in the form
\begin{equation*}
L(t)=\nu(t) {\rm exp} \left( \int^{a_1}_t \frac {y(s)}{s} ds
\right), \
 t \leq a_1,
\end{equation*}
for some $a_1> 0$, where the functions $\nu$ and $y$ are continuous
and for $t \rightarrow 0^{+}$, $y(t)\rightarrow 0$ and
$\nu(t)\rightarrow c$, with $c>0$. We call that
\begin{equation*}\label{2.3}
 \hat{L}(t)=c {\rm exp} \left( \int^{a_1}_t
\frac {y(s)}{s} ds \right), \
 t \leq a_1,
\end{equation*}
  is {\bf normalized} slowly varying
 at zero and
\begin{equation*}\label{2.4}
g(t)=t^\rho\hat{L}(t), \
 t \leq a_1
\end{equation*}
  is {\bf normalized} regularly varying at zero with
 index $\rho$  $(\mbox{and write } g\in NRVZ_\rho).$
\end{proposition}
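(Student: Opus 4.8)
The plan is to derive Proposition \ref{proposition4.6} from the Representation Theorem at positive infinity (Proposition \ref{2.2}) together with the defining relation in Definition \ref{definition4.2}, by means of the inversion $t\mapsto 1/t$. First I would prove the forward implication. Suppose $L$ is slowly varying at zero, defined on some $(0,a)$. By Definition \ref{definition4.2} the function $M(t):=L(1/t)$, defined for $t>1/a$, is slowly varying at positive infinity. Applying Proposition \ref{2.2} to $M$ yields $a_1'>1/a$ and continuous functions $\nu_0,y_0$ with $y_0(t)\to 0$ and $\nu_0(t)\to c>0$ as $t\to +\infty$ such that $M(t)=\nu_0(t)\exp\left(\int_{a_1'}^{t}y_0(s)/s\,ds\right)$ for $t\geq a_1'$. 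Writing $L(t)=M(1/t)$ and performing the substitution $s=1/r$ in the integral (so that $ds/s=-dr/r$, and the limits $s=a_1',\,s=1/t$ become $r=1/a_1',\,r=t$) converts $\int_{a_1'}^{1/t}y_0(s)/s\,ds$ into $\int_{t}^{1/a_1'}y_0(1/r)/r\,dr$. Setting $a_1:=1/a_1'$, $\nu(t):=\nu_0(1/t)$ and $y(t):=y_0(1/t)$, and noting that $1/t\to +\infty$ as $t\to 0^{+}$ so that $\nu(t)\to c$ and $y(t)\to 0$, gives exactly the claimed representation $L(t)=\nu(t)\exp\left(\int_t^{a_1}y(s)/s\,ds\right)$ for $0<t\leq a_1$.

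For the \emph{converse}, suppose $L$ has the stated form on $(0,a_1]$. For fixed $\xi>0$ and $t$ small I would compute
\[
\frac{L(\xi t)}{L(t)} = \frac{\nu(\xi t)}{\nu(t)}\exp\left(\int_{\xi t}^{a_1}\frac{y(s)}{s}\,ds - \int_{t}^{a_1}\frac{y(s)}{s}\,ds\right) = \frac{\nu(\xi t)}{\nu(t)}\exp\left(\int_{\xi t}^{t}\frac{y(s)}{s}\,ds\right).
\]
Given $\varepsilon>0$, choose $\delta>0$ with $|y(s)|\leq \varepsilon$ for $0<s\leq\delta$; then for $t$ small enough the interval with endpoints $\xi t$ and $t$ lies in $(0,\delta]$, so the exponent is bounded in absolute value by $\left|\int_{\xi t}^{t} y(s)/s\, ds\right|\leq \varepsilon|\ln\xi|$ and therefore tends to $0$. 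Since $\nu(\xi t)/\nu(t)\to c/c=1$, we obtain $L(\xi t)/L(t)\to 1=\xi^{0}$, i.e. $L$ is slowly varying at zero. The "normalized" statements are definitions requiring no proof: one reads off $\hat{L}(t)=c\exp\left(\int_t^{a_1}y(s)/s\,ds\right)$ from the representation and sets $g(t)=t^{\rho}\hat{L}(t)$, which is the normalized regularly varying function of index $\rho$ at zero.

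I expect no genuine obstacle here; the only points that need care are the orientation and endpoints of the integral under the change of variables $s=1/r$ (which reverses the limits and accounts for the switch from $\int_{a_1}^{t}$ at infinity to $\int_{t}^{a_1}$ at zero), and the elementary bound $\left|\int_{\xi t}^{t} y(s)/s\, ds\right|\leq \varepsilon|\ln\xi|$ used in the converse. Everything else is direct bookkeeping built on Proposition \ref{2.2} and Definition \ref{definition4.2}.
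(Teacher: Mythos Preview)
Your argument is correct. The inversion $t\mapsto 1/t$ together with Definition~\ref{definition4.2} and Proposition~\ref{2.2} cleanly transports the representation at positive infinity to the representation at zero, and your handling of the substitution $s=1/r$ (which flips the limits and yields $\int_t^{a_1}$) as well as the elementary bound $\bigl|\int_{\xi t}^{t} y(s)/s\,ds\bigr|\leq \varepsilon|\ln\xi|$ in the converse are both accurate.

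Note, however, that the paper does not supply its own proof of Proposition~\ref{proposition4.6}; it is stated in Section~4 as one of several ``basic facts from Karamata regular variation theory'' collected from standard references (Bingham--Goldie--Teugels, Mari\'{c}, Resnick, and Zhang). So there is no in-paper proof to compare against; your derivation simply fills in a routine justification that the paper chose to cite rather than reproduce.
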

\begin{proposition}\label{proposition4.7}
A function $L$ is slowly varying at negative infinity if and only if
it may be written in the form
\begin{equation*}
L(t)=\nu(t) {\rm exp} \left( \int^{a_1}_t \frac {y(s)}{s} ds
\right), \
 t \leq a_1,
\end{equation*}
for some $a_1< 0$, where the functions $\nu$ and $y$ are continuous
and for $t \rightarrow -\infty$, $y(t)\rightarrow 0$ and
$\nu(t)\rightarrow c$, with $c>0$. We call that
\begin{equation*}\label{2.3}
 \hat{L}(t)=c {\rm exp} \left( \int^{a_1}_t
\frac {y(s)}{s} ds \right), \
 t \leq a_1,
\end{equation*}
  is {\bf normalized} slowly varying
 at negative infinity and
\begin{equation*}\label{2.4}
h(t)=(-t)^\rho\hat{L}(t), \
 t \leq a_1
\end{equation*}
  is {\bf normalized} regularly varying at negative infinity with
 index $\rho$.
\end{proposition}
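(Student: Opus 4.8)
The plan is to deduce Proposition \ref{proposition4.7} directly from the Representation Theorem at positive infinity (Proposition \ref{2.2}), using nothing beyond the reflection $t\mapsto -t$ that already defines regular variation at negative infinity in Definition \ref{definition4.3}. There is no new analytic content here: everything rests on Proposition \ref{2.2}, and the task is to transcribe its conclusion under the substitution $s=-\sigma$ while tracking signs and limit directions.

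For the \emph{only if} direction I would argue as follows. Suppose $L$ is slowly varying at negative infinity on some interval $(-\infty,a_0)$ with $a_0<0$. By Definition \ref{definition4.3} taken with index $\rho=0$, the function $\tilde L(t):=L(-t)$ is slowly varying at positive infinity on $(-a_0,+\infty)$. Proposition \ref{2.2} then provides $a_1'>0$ and continuous functions $\tilde\nu,\tilde y$ on $[a_1',\infty)$ with $\tilde y(t)\to0$ and $\tilde\nu(t)\to c$ ($c>0$) as $t\to+\infty$, such that $\tilde L(t)=\tilde\nu(t)\exp\big(\int_{a_1'}^{t}\tfrac{\tilde y(s)}{s}\,ds\big)$ for $t\ge a_1'$. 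Now undo the reflection: put $a_1:=-a_1'<0$ and, for $t\le a_1$, set $\nu(t):=\tilde\nu(-t)$ and $y(t):=-\tilde y(-t)$, which are continuous with $\nu(t)\to c$ and $y(t)\to0$ as $t\to-\infty$ (since $-t\to+\infty$). For $t\le a_1$ one has $-t\ge a_1'$, hence $L(t)=\tilde L(-t)=\tilde\nu(-t)\exp\big(\int_{a_1'}^{-t}\tfrac{\tilde y(s)}{s}\,ds\big)$, and the substitution $s=-\sigma$ turns $\int_{a_1'}^{-t}\tfrac{\tilde y(s)}{s}\,ds$ into $\int_{a_1}^{t}\tfrac{\tilde y(-\sigma)}{\sigma}\,d\sigma=\int_{t}^{a_1}\tfrac{-\tilde y(-\sigma)}{\sigma}\,d\sigma=\int_{t}^{a_1}\tfrac{y(\sigma)}{\sigma}\,d\sigma$. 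This yields exactly the claimed representation $L(t)=\nu(t)\exp\big(\int_{t}^{a_1}\tfrac{y(s)}{s}\,ds\big)$.

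For the \emph{if} direction I would run the same substitution in reverse: if $L$ has the stated form on $(-\infty,a_1]$, then with $\tilde\nu(t):=\nu(-t)$ and $\tilde y(t):=-y(-t)$ one obtains $L(-t)=\tilde\nu(t)\exp\big(\int_{a_1'}^{t}\tfrac{\tilde y(s)}{s}\,ds\big)$ on $[a_1',\infty)$ with $\tilde\nu(t)\to c$, $\tilde y(t)\to0$, so $t\mapsto L(-t)$ is slowly varying at positive infinity by the converse half of Proposition \ref{2.2}, and therefore $L$ is slowly varying at negative infinity. Finally, the assertions about the \emph{normalized} representative $\hat L(t)=c\exp\big(\int_{t}^{a_1}\tfrac{y(s)}{s}\,ds\big)$ and the normalized regularly varying function $h(t)=(-t)^{\rho}\hat L(t)$ are read off from the corresponding normalized statements in Proposition \ref{2.2} under the same reflection, using $h(-t)=t^{\rho}\hat{\tilde L}(t)$.

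The only step that demands care — and it is the crux of every such reflection argument — is the sign of $\tilde y$ together with the orientation of the integral when passing from $s$ to $-\sigma$: one must verify that reversing the endpoints and the sign produced by $ds=-d\sigma$ combine to give precisely the minus sign absorbed into $y(\sigma)=-\tilde y(-\sigma)$, and that the limit $t\to-\infty$ corresponds to $-t\to+\infty$ in $\tilde\nu,\tilde y$. Beyond this bookkeeping there is no obstacle; each object in the negative‑infinity statement matches an object in Proposition \ref{2.2} term by term.
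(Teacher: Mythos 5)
The paper does not supply a proof of Proposition~\ref{proposition4.7}: Section~4 opens by declaring that these are collected facts from Karamata theory, citing \cite{BGT}, \cite{Maric}, \cite{Resnick} and \cite{JFAZ}, and Propositions~\ref{2.2}, \ref{proposition4.6} and \ref{proposition4.7} are all stated without argument. Your reflection derivation is nevertheless the natural way to obtain the negative-infinity version from the positive-infinity Representation Theorem, and it is correct. The key identity
\[
\int_{a_1'}^{-t}\frac{\tilde y(s)}{s}\,ds
=\int_{t}^{a_1}\frac{-\tilde y(-\sigma)}{\sigma}\,d\sigma
=\int_{t}^{a_1}\frac{y(\sigma)}{\sigma}\,d\sigma ,
\qquad y(\sigma):=-\tilde y(-\sigma),\ a_1:=-a_1',
\]
is verified correctly: the sign from $ds=-d\sigma$ and the sign from swapping the limits of integration cancel, leaving exactly the minus sign that you absorb into $y$. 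The domain bookkeeping also checks out ($-\sigma\ge a_1'$ for $\sigma\le a_1$, so $\sigma<0$ and the integrand is well defined), as do the limits $\nu(t)=\tilde\nu(-t)\to c$ and $y(t)\to 0$ as $t\to-\infty$. The reverse direction is genuinely the same substitution run backwards, and invoking the converse half of Proposition~\ref{2.2} together with Definition~\ref{definition4.3} closes the loop.

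One minor point worth being explicit about: the final two displays in Proposition~\ref{proposition4.7} (the normalized representative $\hat L$ and $h(t)=(-t)^{\rho}\hat L(t)$) are definitions rather than assertions to be proved, mirroring the corresponding definitions in Proposition~\ref{2.2}. Your observation $h(-t)=t^{\rho}\hat{\tilde L}(t)$ is the right sanity check that the definition is consistent with Definition~\ref{definition4.3}, and it is correct, but there is no additional claim there requiring proof. In short: the paper treats the statement as imported, you supply the standard reduction via $t\mapsto -t$, and the reduction is sound.
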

\begin{proposition}\label{proposition3.5}
A function $ f\in C^1[a_1, \infty)$ for some $a_{1}>0$ belongs to
$NRV_{\rho}$ if and only
 if
\begin{equation*}\label{smsp}
 \lim_{t \rightarrow +\infty}
 \frac{tf'(t)}{f(t)}=\rho.
\end{equation*}
A function $h\in C^1(0, a_{1}]$ for some $a_{1}>0$ belongs to
$NRVZ_{\rho}$ if and only
 if
\begin{equation*}\label{2.5}
 \lim_{t\rightarrow 0^{+}}
 \frac{th'(t)}{h(t)}=\rho.
\end{equation*}
A function $h\in C^1(-\infty, a_{1}]$ for some $a_{1}<0$ is
normalized regularly varying at negative infinity with index $\rho$
if and only
 if
\begin{equation*}\label{2.5}
 \lim_{t\rightarrow -\infty}
 \frac{th'(t)}{h(t)}=\rho.
\end{equation*}
\end{proposition}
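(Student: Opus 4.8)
The plan is to deduce all three equivalences from the Representation Theorem (Proposition \ref{2.2}) together with an elementary integration argument: I would prove the first statement (at positive infinity) in detail, and then reduce the statements at zero and at negative infinity to it by the changes of variable $t\mapsto 1/t$ and $t\mapsto -t$ that are built into Definitions \ref{definition4.2} and \ref{definition4.3}.

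First I would treat positive infinity. For the ``only if'' direction, suppose $f\in NRV_\rho$. By Proposition \ref{2.2} we may write $f(t)=t^\rho\hat L(t)$ on $[a_1,\infty)$ with $\hat L(t)=c\exp\big(\int_{a_1}^t y(s)s^{-1}\,ds\big)$, $c>0$, $y$ continuous and $y(t)\to 0$. Since $\hat L'(t)=\hat L(t)y(t)/t$, differentiating gives $f'(t)=t^{\rho-1}\hat L(t)\big(\rho+y(t)\big)$, hence $tf'(t)/f(t)=\rho+y(t)\to\rho$. For the ``if'' direction, suppose $\lim_{t\to\infty}tf'(t)/f(t)=\rho$. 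Because $f\in C^1[a_1,\infty)$ and $f>0$, the function $y(t):=tf'(t)/f(t)-\rho$ is continuous on $[a_1,\infty)$ and tends to $0$. Writing $f'(t)/f(t)=(\rho+y(t))/t$ and integrating from $a_1$ to $t$ yields $\ln\big(f(t)/f(a_1)\big)=\rho\ln(t/a_1)+\int_{a_1}^t y(s)s^{-1}\,ds$, i.e. $f(t)=t^\rho\cdot\big(f(a_1)a_1^{-\rho}\exp(\int_{a_1}^t y(s)s^{-1}\,ds)\big)$. Setting $c:=f(a_1)a_1^{-\rho}>0$, the bracketed factor is exactly a normalized slowly varying function in the sense of Proposition \ref{2.2}, so $f\in NRV_\rho$.

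Next I would obtain the statement at zero. By Definition \ref{definition4.2}, $h\in NRVZ_\rho$ iff $g(t):=h(1/t)$ belongs to $NRV_{-\rho}$ on $[1/a_1,\infty)$; note $g\in C^1$ since $h\in C^1(0,a_1]$. A computation gives $g'(t)=-t^{-2}h'(1/t)$, hence $tg'(t)/g(t)=-(1/t)h'(1/t)/h(1/t)$, and with $s=1/t\to 0^+$ as $t\to\infty$ this equals $-sh'(s)/h(s)$. By the equivalence just established, $g\in NRV_{-\rho}$ iff $\lim_{t\to\infty}tg'(t)/g(t)=-\rho$, which is precisely $\lim_{s\to 0^+}sh'(s)/h(s)=\rho$. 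Similarly, by Definition \ref{definition4.3}, $h\in C^1(-\infty,a_1]$ with $a_1<0$ is normalized regularly varying at negative infinity with index $\rho$ iff $g(t):=h(-t)$ lies in $NRV_\rho$ on $[-a_1,\infty)$; here $g'(t)=-h'(-t)$, so $tg'(t)/g(t)=(-t)h'(-t)/h(-t)$, and with $s=-t\to-\infty$ this is $sh'(s)/h(s)$, giving the claimed equivalence.

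The argument is essentially routine Karamata calculus; the one point that must not be overlooked is the role of the word \emph{normalized}. In the ``only if'' direction one genuinely uses that the representation of $f$ has constant $\nu\equiv c$ rather than merely $\nu(t)\to c$, since otherwise the differentiation step would produce an extra $\nu'(t)/\nu(t)$ term and $tf'(t)/f(t)$ need not converge; conversely, in the ``if'' direction it is precisely the constancy of $c=f(a_1)a_1^{-\rho}$ that certifies that the slowly varying factor we construct is normalized. Beyond keeping track of this, there is no real obstacle.
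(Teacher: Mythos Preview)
Your argument is correct. Note, however, that the paper does not actually prove this proposition: it is stated in Section~4 as a standard fact from Karamata regular variation theory, with blanket references to \cite{BGT}, \cite{Maric}, \cite{Resnick} and \cite{JFAZ}, and no proof is supplied. So there is no ``paper's own proof'' to compare against; you have written out what the authors take for granted.

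One small point of presentation: when you reduce the zero and negative-infinity cases to the positive-infinity case, you invoke Definitions~\ref{definition4.2} and~\ref{definition4.3}, but those definitions concern $RVZ_\rho$ and regular variation at $-\infty$, not the \emph{normalized} classes $NRVZ_\rho$ etc. The claim ``$h\in NRVZ_\rho$ iff $t\mapsto h(1/t)\in NRV_{-\rho}$'' is true, but it follows from comparing the explicit representations in Propositions~\ref{2.2} and~\ref{proposition4.6} (a change of variable $s=1/u$ in the exponent), not directly from Definition~\ref{definition4.2}. An equally clean alternative, avoiding this issue entirely, is to repeat your direct differentiation/integration argument using the representations in Propositions~\ref{proposition4.6} and~\ref{proposition4.7} rather than reducing to the first case. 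Either way, the proof goes through.
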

\begin{proposition}\label{p3}
Let functions $L,\, L_{1}$ be slowly varying at zero and at negative
infinity, respectively, then
\begin{description}
\item[$\mathbf{(i)}$] for every $\rho > 0$ and $t\rightarrow 0^{+},\,$
$ t^{\rho}L(t)\rightarrow{0}, \quad
t^{-\rho}L(t)\rightarrow{\infty};$
\item[$\mathbf{(ii)}$] for $\rho>0$ and $t\rightarrow -\infty$,\, $(-t)^{-\rho}L_{1}(t) \rightarrow 0$
and $(-t)^{\rho}L_{1}(t)\rightarrow+\infty$.
\end{description}
\end{proposition}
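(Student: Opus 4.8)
The plan is to derive both statements from the Representation Theorems for slowly varying functions already stated in the excerpt — Proposition \ref{proposition4.6} for the behavior at zero and Proposition \ref{proposition4.7} (or a reduction to Proposition \ref{proposition4.6}) for the behavior at negative infinity. The heart of the matter is the classical observation that the exponential factor in the representation can be squeezed between two powers of $t$ with arbitrarily small exponent; choosing that exponent strictly smaller than $\rho$ then forces the claimed limits.

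For part \textbf{(i)}, fix $\rho>0$ and pick $\varepsilon\in(0,\rho)$. By Proposition \ref{proposition4.6} write $L(t)=\nu(t)\exp\big(\int_t^{a_1}\frac{y(s)}{s}\,ds\big)$ for $t\le a_1$, where $\nu(t)\to c>0$ and $y(t)\to0$ as $t\to0^{+}$. Choose $\delta\in(0,a_1]$ so that $|y(s)|<\varepsilon$ for $0<s<\delta$, and split $\int_t^{a_1}=\int_t^{\delta}+\int_{\delta}^{a_1}$ for $0<t<\delta$; the second integral is a fixed constant $K$, while $\big|\int_t^{\delta}\frac{y(s)}{s}\,ds\big|\le \varepsilon\int_t^{\delta}\frac{ds}{s}=\varepsilon\ln(\delta/t)$. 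Hence $e^{K}(t/\delta)^{\varepsilon}\le \exp\big(\int_t^{a_1}\frac{y(s)}{s}\,ds\big)\le e^{K}(\delta/t)^{\varepsilon}$, and since $c/2\le\nu(t)\le 2c$ for $t$ small, we get $\tfrac{c}{2}e^{K}(t/\delta)^{\varepsilon}\le L(t)\le 2c\,e^{K}(\delta/t)^{\varepsilon}$ for all sufficiently small $t>0$. Consequently $0\le t^{\rho}L(t)\le 2c\,e^{K}\delta^{\varepsilon}\,t^{\rho-\varepsilon}\to0$ and $t^{-\rho}L(t)\ge \tfrac{c}{2}e^{K}\delta^{-\varepsilon}\,t^{\varepsilon-\rho}\to+\infty$ as $t\to0^{+}$, because $\rho-\varepsilon>0$. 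This proves \textbf{(i)}.

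For part \textbf{(ii)}, the cleanest route is reduction to \textbf{(i)}. If $L_1$ is slowly varying at negative infinity, then by Definition \ref{definition4.3} the map $\tau\mapsto L_1(-\tau)$ is slowly varying at positive infinity, and therefore $g(s):=L_1(-1/s)$ is slowly varying at zero (for $\xi>0$, $\lim_{s\to0^{+}}g(\xi s)/g(s)=\lim_{u\to+\infty}L_1(-\xi u)/L_1(-u)=1$). Applying \textbf{(i)} to $g$ with the same $\rho>0$ gives $s^{\rho}g(s)\to0$ and $s^{-\rho}g(s)\to+\infty$ as $s\to0^{+}$. Now substitute $s=1/(-t)$, so that $s\to0^{+}$ exactly when $t\to-\infty$, $g(s)=L_1(t)$ and $s^{\rho}=(-t)^{-\rho}$; this yields $(-t)^{-\rho}L_1(t)\to0$ and $(-t)^{\rho}L_1(t)\to+\infty$ as $t\to-\infty$. (Alternatively one repeats the integral estimate of \textbf{(i)} directly from Proposition \ref{proposition4.7}, substituting $u=-s$ to handle the negative variable of integration; this gives the same bounds $c_1(-t/M)^{\varepsilon}\le L_1(t)\le c_2(-t/M)^{-\varepsilon}$ near $-\infty$.)

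I do not anticipate a genuine obstacle here: the statement is elementary once the Representation Theorems are in hand. The only points requiring a little care are (a) tracking the orientation of the integral $\int_t^{a_1}$ in the zero-representation so the squeeze bounds come out with the correct exponents, and (b) in the negative-infinity case, the bookkeeping of signs under the substitution $s\mapsto -s$ (or the passage through $g(s)=L_1(-1/s)$); in both cases the decisive and only nontrivial choice is $0<\varepsilon<\rho$, which is precisely the mechanism behind the elementary Potter-type bounds.
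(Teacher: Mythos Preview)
Your argument is correct. Note, however, that the paper does not supply its own proof of this proposition: it is listed in Section~4 among ``some basic facts from Karamata regular variation theory'' with references to \cite{BGT}, \cite{Maric}, \cite{Resnick} and \cite{JFAZ}, and is stated without proof as a standard result. What you have written is precisely the classical Potter-bound argument from those references, deriving the two-sided power estimates $c_1 t^{\varepsilon}\le L(t)\le c_2 t^{-\varepsilon}$ from the Representation Theorem and then exploiting $0<\varepsilon<\rho$; your reduction of part~\textbf{(ii)} to part~\textbf{(i)} via $g(s)=L_1(-1/s)$ is clean and entirely in the spirit of Definitions~\ref{definition4.2}--\ref{definition4.3}. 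There is nothing to correct.
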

\begin{proposition} \label{HUR}$(\mbox{Asymptotic Behavior}).$
Let $L$ and $L_{1}$ be slowly varying at positive infinity and at
zero, respectively and $a_{1}$ be a positive constant, then
\begin{description}
\item[$\mathbf{(i)}$]
$ \int_t^\infty s^{\rho}L(s)ds\sim (-\rho-1)^{-1}t^{1+\rho}L(t), \
t\rightarrow+\infty, \  for \ \rho< -1; $
\item[$\mathbf{(ii)}$]
$ \int_{t}^{a_{1}}s^{\rho}L_{1}(s)ds\sim
(-\rho-1)^{-1}t^{1+\rho}L_{1}(t),\,t\rightarrow0^{+}, \mbox{ for
}\rho<-1;$
\item[$\mathbf{(iii)}$]
$ \int_{a_{1}}^{t}s^{\rho}L(s)ds\sim
(\rho+1)^{-1}t^{1+\rho}L(t),\,t\rightarrow+\infty, \mbox{ for
}\rho>-1;$
\item[$\mathbf{(iv)}$]$\int_{0}^{t}s^{\rho}L_{1}(s)ds\sim (\rho+1)^{-1}t^{1+\rho}L_{1}(t),\,t\rightarrow0^{+}\mbox{ for
}\rho>-1$.
\end{description}
\end{proposition}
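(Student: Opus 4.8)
The plan is to reduce everything to the normalized case and then invoke L'Hôpital's rule. By the Representation Theorem (Proposition \ref{2.2}), $L(t)=\nu(t)\exp\left(\int_{a_1}^{t}\frac{y(s)}{s}\,ds\right)$ with $\nu(t)\to c>0$, so $L(t)/\hat L(t)=\nu(t)/c\to1$, i.e.\ $L\sim\hat L$ at $+\infty$; likewise $L_1\sim\hat L_1$ at $0^{+}$ by Proposition \ref{proposition4.6}. I will then use the elementary fact that for positive functions $p\sim q$ near an endpoint one has $\int p\sim\int q$ on a one-sided neighborhood of that endpoint: if the integral converges this is immediate from $(1-\varepsilon)q\le p\le(1+\varepsilon)q$, and if it diverges one splits off a fixed piece and uses that the remaining integral tends to $+\infty$. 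Since $\hat L,\hat L_1$ are themselves slowly varying and, being of the form $c\exp(\int y(s)/s\,ds)$ with $y$ continuous, are of class $C^{1}$ with
\[
\frac{t\hat L'(t)}{\hat L(t)}=y(t)\to0\ \ (t\to+\infty),\qquad \frac{t\hat L_1'(t)}{\hat L_1(t)}=-y(t)\to0\ \ (t\to0^{+})
\]
(this is also Proposition \ref{proposition3.5} with index $0$), it suffices to prove \textbf{(i)}--\textbf{(iv)} with $L,L_1$ replaced by $\hat L,\hat L_1$, and by transitivity of $\sim$ the general case follows.

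\textbf{The two statements at $+\infty$.} For \textbf{(i)} note first that $\int_{t}^{\infty}s^{\rho}\hat L(s)\,ds$ converges: the representation gives $\hat L(s)\le C_{\varepsilon}s^{\varepsilon}$ for $s$ large and any $\varepsilon>0$, so with $\varepsilon<-\rho-1$ the integrand is bounded by an integrable power. Put $G(t)=\int_{t}^{\infty}s^{\rho}\hat L(s)\,ds$ and $D(t)=t^{\rho+1}\hat L(t)$; both tend to $0$ as $t\to+\infty$. Then $G'(t)=-t^{\rho}\hat L(t)$ and
\[
D'(t)=t^{\rho}\hat L(t)\big[(\rho+1)+y(t)\big],
\]
which is nonzero for $t$ large since $\rho+1\ne0$ and $y(t)\to0$. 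L'Hôpital's rule ($0/0$ form) then gives
\[
\lim_{t\to+\infty}\frac{G(t)}{D(t)}=\lim_{t\to+\infty}\frac{-1}{(\rho+1)+y(t)}=\frac{-1}{\rho+1}=(-\rho-1)^{-1},
\]
which is \textbf{(i)}. For \textbf{(iii)}, when $\rho>-1$ the lower bound $\hat L(s)\ge c_{\varepsilon}s^{-\varepsilon}$ (with $\varepsilon<\rho+1$) shows that both $\int_{a_1}^{t}s^{\rho}\hat L(s)\,ds$ and $t^{\rho+1}\hat L(t)$ tend to $+\infty$; the same computation, now in the $\infty/\infty$ form, yields $(\rho+1)^{-1}$.

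\textbf{The two statements at $0^{+}$.} These are the mirror images. For \textbf{(ii)}, with $\rho<-1$ one has $\hat L_1(s)\ge c_{\varepsilon}s^{\varepsilon}$ near $0$, so $s^{\rho}\hat L_1(s)\ge c_{\varepsilon}s^{\rho+\varepsilon}$ with $\rho+\varepsilon<-1$, whence $\int_{t}^{a_1}s^{\rho}\hat L_1(s)\,ds\to+\infty$ and likewise $t^{\rho+1}\hat L_1(t)\to+\infty$ as $t\to0^{+}$. Writing $P(t)=\int_{t}^{a_1}s^{\rho}\hat L_1(s)\,ds$ we have $P'(t)=-t^{\rho}\hat L_1(t)$ and $\frac{d}{dt}\big(t^{\rho+1}\hat L_1(t)\big)=t^{\rho}\hat L_1(t)\big[(\rho+1)-y(t)\big]$, nonzero near $0$, so L'Hôpital ($\infty/\infty$) gives the limit $-1/(\rho+1)=(-\rho-1)^{-1}$. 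For \textbf{(iv)}, $\rho>-1$, the bound $\hat L_1(s)\le C_{\varepsilon}s^{-\varepsilon}$ with $\varepsilon<\rho+1$ makes $\int_{0}^{t}s^{\rho}\hat L_1(s)\,ds$ converge and $\to0$, and $t^{\rho+1}\hat L_1(t)\to0$; the identical L'Hôpital computation ($0/0$) gives $(\rho+1)^{-1}$.

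\textbf{Main obstacle.} The computations themselves are one-liners; the points requiring care are purely bookkeeping: checking in each of the four cases whether one is in the $0/0$ or $\infty/\infty$ regime and that the denominator's derivative is eventually nonzero (this is precisely where $\rho\ne-1$ enters), and, in the reduction lemma, splitting off a bounded piece before dividing when the integral diverges. An alternative route that avoids L'Hôpital uses the substitution $s=tu$ to write $\int_{t}^{\infty}s^{\rho}L(s)\,ds=t^{\rho+1}L(t)\int_{1}^{\infty}u^{\rho}\frac{L(tu)}{L(t)}\,du$ and then passes to the limit under the integral via the Uniform Convergence Theorem (Proposition \ref{ppp}) together with a Potter-type domination $L(tu)/L(t)\le A\,u^{\delta}$ for $u\ge1$ (itself a short consequence of the Representation Theorem) and dominated convergence; the remaining three cases follow by the analogous substitutions.
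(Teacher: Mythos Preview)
The paper does not supply its own proof of this proposition: it is stated in Section~4 as one of several ``basic facts from Karamata regular variation theory,'' with references to \cite{BGT}, \cite{Maric}, \cite{Resnick}, and \cite{JFAZ} in lieu of an argument. So there is no in-paper proof to compare against; I can only assess your argument on its own merits.

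Your proof is correct. The reduction to the normalized case via the Representation Theorem is the right first move, and the observation that $p\sim q$ implies $\int p\sim\int q$ on a one-sided neighborhood (splitting off a fixed piece in the divergent case) is handled properly. In the normalized setting the key identity $t\hat L'(t)/\hat L(t)=y(t)\to0$ is exactly what makes the L'H\^opital computation go through, and you have correctly identified in each of the four cases whether you are in a $0/0$ or $\infty/\infty$ form and that the denominator's derivative is eventually nonvanishing (precisely because $\rho+1\ne0$). The alternative you sketch at the end---substitute $s=tu$, use the Uniform Convergence Theorem together with a Potter-type bound to justify dominated convergence---is in fact the classical route taken in \cite{BGT} (Karamata's theorem), and is arguably the more robust one since it does not require passing to a normalized representative; your L'H\^opital argument, on the other hand, is shorter once the normalization is in hand. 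Either approach is perfectly acceptable here.
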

\section{Auxiliary results}
In this section, we collect some useful results, which are necessary
in the proofs of our results.
\begin{lemma}\label{lemma2j}$($Lemma $2.1$ in Lazer and McKenna $\cite{Lazer-McKenna1}$$)$
Let $g\in C^{1}(0, +\infty)$ is positive on $(0, +\infty)$,
$g'(t)\leq 0$, $\forall\,t\in(0, +\infty)$ and
$\lim_{t\rightarrow0^{+}}g(t)=+\infty$, then
\[
\lim_{t\rightarrow0^{+}}\frac{\int_{t}^{1}g(s)ds}{g(t)}=0.
\]
\end{lemma}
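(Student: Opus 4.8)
The statement to prove is Lemma \ref{lemma2j}: if $g\in C^1(0,+\infty)$ is positive, non-increasing, and $\lim_{t\to0^+}g(t)=+\infty$, then $\lim_{t\to0^+}\frac{\int_t^1 g(s)\,ds}{g(t)}=0$. The natural approach is L'Hôpital's rule combined with the monotonicity. Note first that, since $g$ is non-increasing and blows up at $0$, the integral $\int_t^1 g(s)\,ds$ may or may not diverge as $t\to0^+$; I would split into two cases accordingly.

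In the case $\int_0^1 g(s)\,ds<+\infty$, the numerator tends to the finite limit $\int_0^1 g(s)\,ds$ while the denominator $g(t)\to+\infty$, so the quotient trivially tends to $0$. In the case $\int_0^1 g(s)\,ds=+\infty$, both numerator and denominator tend to $+\infty$ as $t\to0^+$, so this is a $\frac{\infty}{\infty}$ indeterminate form and L'Hôpital's rule applies: the derivative of the numerator $\frac{d}{dt}\int_t^1 g(s)\,ds=-g(t)$, and the derivative of the denominator is $g'(t)$. Thus the quotient of derivatives is $\frac{-g(t)}{g'(t)}$. Here one must be slightly careful: if $g'(t)=0$ on a set, the L'Hôpital quotient is not defined, so instead I would argue directly. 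Fix $\varepsilon>0$; since $g(t)\to+\infty$, choose $\delta\in(0,1)$ with $g(\delta)>2\varepsilon^{-1}\int_\delta^1 g(s)\,ds$ (possible because the right side is a fixed finite number while $g(\delta)\to\infty$). Then for $0<t<\delta$, split $\int_t^1 g(s)\,ds=\int_t^\delta g(s)\,ds+\int_\delta^1 g(s)\,ds$. Using monotonicity, $\int_t^\delta g(s)\,ds\le g(t)(\delta-t)\le g(t)\delta$, and by the choice of $\delta$, $\int_\delta^1 g(s)\,ds<\tfrac{\varepsilon}{2}g(\delta)\le\tfrac{\varepsilon}{2}g(t)$ since $g(t)\ge g(\delta)$ for $t\le\delta$. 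Hence $\frac{\int_t^1 g(s)\,ds}{g(t)}\le\delta+\tfrac{\varepsilon}{2}$, and shrinking $\delta$ further if necessary so that $\delta<\tfrac{\varepsilon}{2}$ gives the bound $\le\varepsilon$.

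Actually, this direct argument handles \emph{both} cases uniformly and avoids any case distinction or reliance on L'Hôpital, so I would present just this single estimate. The one point requiring care — and the only real obstacle — is ensuring the choice of $\delta$ is legitimate: one needs $\int_\delta^1 g(s)\,ds$ to be finite for each fixed $\delta>0$, which holds since $g$ is continuous on the compact interval $[\delta,1]$, and one needs $g(\delta)$ to be large, which is exactly the hypothesis $g(t)\to+\infty$. Everything else is elementary. Since $\varepsilon>0$ was arbitrary and the quotient is manifestly non-negative, this establishes $\lim_{t\to0^+}\frac{\int_t^1 g(s)\,ds}{g(t)}=0$, completing the proof.
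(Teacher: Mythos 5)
Note first that the paper itself supplies no proof of this lemma; it is quoted directly from Lazer and McKenna, so there is no internal proof to compare against, only the question of whether your argument is sound.

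Your two-case split is fine, your final ``single estimate'' is the right idea, and your reduction $\int_t^1 g \le \delta\, g(t) + \int_\delta^1 g$ (using monotonicity on $[t,\delta]$) is exactly what is needed. However, there is a genuine slip in how you select $\delta$. You claim you can ``choose $\delta\in(0,1)$ with $g(\delta)>2\varepsilon^{-1}\int_\delta^1 g(s)\,ds$ (possible because the right side is a fixed finite number while $g(\delta)\to\infty$).'' That parenthetical is false: $\int_\delta^1 g(s)\,ds$ is \emph{not} a fixed number, it varies with $\delta$, and in the divergent case it tends to $+\infty$ along with $g(\delta)$. Asserting that $\int_\delta^1 g(s)\,ds / g(\delta)$ can be made small at \emph{some} $\delta$ near $0$ is a nontrivial claim about the relative growth rates of $G(\delta)=\int_\delta^1 g$ and $g(\delta)=-G'(\delta)$ --- it is essentially a weak form of the lemma itself (it can be rescued by a Gronwall-type comparison for $G$, but you have not supplied that). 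The same issue infects your closing remark about ``shrinking $\delta$ further so that $\delta<\varepsilon/2$'': as you shrink $\delta$ the other condition on $\delta$ may fail, since it is not monotone in $\delta$.

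The repair is simple and collapses both cases, as you intended: fix $\varepsilon>0$, choose $\delta<\varepsilon/2$ \emph{first}, and observe that once $\delta$ is frozen, $K:=\int_\delta^1 g(s)\,ds$ truly is a finite constant (continuity of $g$ on $[\delta,1]$). Then for $0<t<\delta$,
\[
\frac{\int_t^1 g(s)\,ds}{g(t)} \;\le\; \delta + \frac{K}{g(t)},
\]
and since $g(t)\to+\infty$ as $t\to0^+$, the second term is $<\varepsilon/2$ for all $t$ sufficiently small. This gives the bound $<\varepsilon$ without any circularity, and it covers both the convergent and divergent cases of $\int_0^1 g$ uniformly, which is the clean version of what you were aiming for.
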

\begin{lemma}\label{lemma21}
Let $f$ satisfy $\mathbf{(S_{1})}$ $(\mbox{or }\mathbf{(S_{01})})$
and $\mathbf{(f_{1})}$, then
\begin{description}
\item[$\mathbf{(i)}$] if $\mathbf{(f_{2})}$ holds, then
$C_{f}^{+\infty}\geq1$ and $
\lim_{t\rightarrow+\infty}\frac{(f(t))^{1/k}}{t}\int_{t}^{+\infty}(f(s))^{-1/k}ds=C_{f}^{+\infty}-1;
$
\item[$\mathbf{(ii)}$] $\mathbf{(f_{2})}$ holds with
$C_{f}^{+\infty}>1$ if and only if $f\in NRV_{\gamma}$ with
$\gamma=\frac{kC_{f}^{+\infty}}{C_{f}^{+\infty}-1}$;
\item[$\mathbf{(iii)}$] if $\mathbf{(f_{2})}$ holds with
$C_{f}^{+\infty}=1$, then for any $\gamma>0$, it holds $\lim_{t\rightarrow+\infty}\frac{f(t)}{t^{\gamma}}=+\infty$. 
\end{description}
\end{lemma}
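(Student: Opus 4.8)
The plan is to prove the three statements in Lemma \ref{lemma21} by exploiting the definition $I(t)=((f(t))^{1/k})'\int_{t}^{+\infty}(f(s))^{-1/k}ds$ together with the hypothesis $\lim_{t\to+\infty}I(t)=C_{f}^{+\infty}\in(0,+\infty)$, and by translating these into statements about the auxiliary quantity $P(t):=\frac{(f(t))^{1/k}}{t}\int_{t}^{+\infty}(f(s))^{-1/k}ds$, which is essentially $I(t)$ with the derivative $((f(t))^{1/k})'$ replaced by the difference quotient $(f(t))^{1/k}/t$. The key observation is that $\mathbf{(f_{1})}$ forces $(f(t))^{1/k}$ to be increasing (since $f$ is increasing) and the Keller–Osserman-type integrability $\int^{+\infty}(f(s))^{-1/k}ds<+\infty$ forces $(f(t))^{1/k}$ to blow up fast; combining this with $I(t)\to C_{f}^{+\infty}$ will pin down the asymptotic growth order of $f$.

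For $\mathbf{(i)}$, first I would write $g(t):=(f(t))^{1/k}$ and observe that $g$ is positive and nondecreasing, so $g'\ge0$ a.e., hence $I(t)=g'(t)\int_t^\infty (f(s))^{-1/k}ds\ge0$, and since $g'(t)\ge0$ the limit $C_{f}^{+\infty}\ge0$; to get the strict bound $C_{f}^{+\infty}\ge1$ I would integrate. Set $q(t):=\int_t^\infty(f(s))^{-1/k}ds$, so $q'(t)=-(f(t))^{-1/k}=-1/g(t)$, and $q(t)\to0$ as $t\to+\infty$ with $q>0$. Then $I(t)=g'(t)q(t)$. Consider $\frac{d}{dt}(g(t)q(t))=g'(t)q(t)+g(t)q'(t)=I(t)-1$. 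Integrating from $t$ to some large $T$ and using $g(t)q(t)\ge0$, plus the fact that $g(t)q(t)$ cannot tend to a positive limit faster than forced (indeed $g(t)q(t)\to0$ is NOT automatic, so one must be careful), I would instead argue: if $\liminf I(t)<1$ then $g(t)q(t)$ is eventually strictly decreasing, but $g(t)q(t)>0$ always, so it converges to some $\ell\ge0$; combined with $I(t)\to C_f^{+\infty}$ this gives $C_f^{+\infty}=1$ unless $\ell=+\infty$ is excluded. More directly: by L'Hôpital-type reasoning applied to $g(t)q(t)$, its derivative tending to $C_f^{+\infty}-1$ means $g(t)q(t)\sim(C_f^{+\infty}-1)t$ if $C_f^{+\infty}>1$; if $C_f^{+\infty}<1$ then $g(t)q(t)$ would eventually become negative, a contradiction; if $C_f^{+\infty}=1$ one handles it separately. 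This simultaneously yields $C_f^{+\infty}\ge1$ and $P(t)=g(t)q(t)/t\to C_f^{+\infty}-1$.

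For $\mathbf{(ii)}$, I would use $\mathbf{(i)}$: when $C_f^{+\infty}>1$, we have $\frac{g(t)q(t)}{t}\to C_f^{+\infty}-1>0$, hence $tg'(t)/g(t)=\frac{tg'(t)q(t)}{g(t)q(t)}=\frac{t\,I(t)}{g(t)q(t)}\to\frac{C_f^{+\infty}}{C_f^{+\infty}-1}=:\rho/k$. By Proposition \ref{proposition3.5}, $g=(f)^{1/k}\in NRV_{\rho/k}$, equivalently $f\in NRV_{\rho}$ with $\rho=\frac{kC_f^{+\infty}}{C_f^{+\infty}-1}$. For the converse, if $f\in NRV_\gamma$ with $\gamma>k$, then $(f(s))^{-1/k}\in NRV_{-\gamma/k}$ with $-\gamma/k<-1$, so Proposition \ref{HUR}\,$\mathbf{(i)}$ gives $q(t)=\int_t^\infty(f(s))^{-1/k}ds\sim\frac{k}{\gamma-k}t(f(t))^{-1/k}$; differentiating $g\in NRV_{\gamma/k}$ gives $g'(t)\sim\frac{\gamma}{k}(f(t))^{1/k}/t$, so $I(t)=g'(t)q(t)\to\frac{\gamma}{k}\cdot\frac{k}{\gamma-k}=\frac{\gamma}{\gamma-k}$, which equals $C_f^{+\infty}$ and indeed $>1$; solving back recovers $\gamma=\frac{kC_f^{+\infty}}{C_f^{+\infty}-1}$. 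For $\mathbf{(iii)}$, when $C_f^{+\infty}=1$, part $\mathbf{(i)}$ gives $P(t)=\frac{g(t)q(t)}{t}\to0$, i.e. $g(t)q(t)=o(t)$. From $q'=-1/g$ one gets, for any $\gamma>0$, that comparing with the power case: one wants $f(t)/t^\gamma\to+\infty$. I would argue by contradiction — if $f(t_n)/t_n^\gamma$ stays bounded along a sequence, then $g(t_n)\le Ct_n^{\gamma/k}$, but since $g$ is increasing and $I(t)\to1$ (which is the borderline making $\int(f)^{-1/k}$ just barely convergent), a careful estimate of $q(t_n)$ from below using monotonicity forces $g(t)q(t)$ not to be $o(t)$, contradicting $P(t)\to0$; more cleanly, $C_f^{+\infty}=1$ together with $\mathbf{(ii)}$-type reasoning shows $f$ cannot be $NRV_\gamma$ for any finite $\gamma$, and monotonicity plus the Representation Theorem upgrades this to $f$ growing faster than every power.

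The main obstacle I expect is the rigorous justification in $\mathbf{(i)}$ that $C_f^{+\infty}\ge1$ and that $P(t)\to C_f^{+\infty}-1$: this requires controlling the boundary term $g(t)q(t)$ as $t\to+\infty$ (showing it is either $O(t)$ or its growth is exactly linear with slope $C_f^{+\infty}-1$), which is a delicate Abel-summation/L'Hôpital argument since neither $g(t)q(t)\to0$ nor $g(t)q(t)\to\infty$ is a priori clear, and the three sub-cases $C_f^{+\infty}<1$, $=1$, $>1$ must be separated. The regular-variation converse in $\mathbf{(ii)}$ and the "faster than any power" conclusion in $\mathbf{(iii)}$ are then relatively routine applications of Propositions \ref{proposition3.5}, \ref{HUR}, and the Representation Theorem \ref{2.2}.
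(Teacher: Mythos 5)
Parts $\mathbf{(i)}$ and $\mathbf{(ii)}$ follow essentially the paper's route: you work with the identity $\frac{d}{dt}\big((f(t))^{1/k}\int_{t}^{+\infty}(f(s))^{-1/k}ds\big)=I(t)-1$ (which the paper obtains after integrating by parts and then applies l'H\^{o}pital to), and you derive $\frac{tf'(t)}{f(t)}\to\frac{kC_{f}^{+\infty}}{C_{f}^{+\infty}-1}$ before invoking Proposition \ref{proposition3.5} and Proposition \ref{HUR} $\mathbf{(i)}$ for the converse. The sub-case discussion you worry about in $\mathbf{(i)}$ is unnecessary: once you know $(gq)'(t)\to C_{f}^{+\infty}-1$ with $g(t)q(t)\ge 0$, the Ces\`{a}ro/l'H\^{o}pital step $\frac{g(t)q(t)}{t}\to C_{f}^{+\infty}-1$ and the positivity of the left side together give $C_{f}^{+\infty}\ge 1$ in one stroke, uniformly for all three sub-cases.

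Part $\mathbf{(iii)}$ has a genuine gap. You correctly observe (via the same computation as $\mathbf{(ii)}$) that $C_{f}^{+\infty}=1$ forces $\frac{tf'(t)}{f(t)}\to+\infty$, but neither of the two routes you sketch from there is sound. The contradiction route is underspecified: knowing $g(t_{n})\le C t_{n}^{\gamma/k}$ at a sparse sequence gives no lower bound on $q(t_{n})=\int_{t_{n}}^{\infty}(g(s))^{-1}ds$, because $g$ could jump up immediately to the right of $t_{n}$; making this work would require a global comparison, which you do not supply. The ``more cleanly'' route is logically false as stated: ``$f$ is increasing and is not $NRV_{\gamma}$ for any $\gamma$'' does \emph{not} imply $f$ grows faster than every power --- increasing functions with $t^{2+\sin(\log\log t)}$-type behavior are counterexamples --- and the Representation Theorem has nothing to say about a function that is not regularly varying in the first place. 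The missing step is elementary and is exactly what the paper does: from $\frac{tf'(t)}{f(t)}\to+\infty$, for any $\gamma>0$ there is $t_{0}$ with $\frac{f'(t)}{f(t)}>\frac{\gamma+1}{t}$ for $t\ge t_{0}$; integrating from $t_{0}$ to $t$ yields $\ln f(t)-\ln f(t_{0})>(\gamma+1)(\ln t-\ln t_{0})$, hence $\frac{f(t)}{t^{\gamma}}>\frac{f(t_{0})}{t_{0}^{\gamma+1}}\,t\to+\infty$. You should replace your two sketches by this direct integration.
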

\begin{proof}
\noindent $\mathbf{(i)}$ We see
\begin{equation*}\label{2f3}
I(t)=((f(t))^{1/k})'\int_{t}^{+\infty}(f(s))^{-1/k}ds,\,t>0.
\end{equation*}
Integrate $I$ from $a>0$ to $t>a$ and  integrate by parts, we obtain
that
\[
\begin{split}
\int_{a}^{t}I(s)ds&=\int_{a}^{t}((f(s))^{1/k})'\int_{s}^{+\infty}(f(\tau))^{-1/k}d\tau
ds\\
&=(f(t))^{1/k}\int_{t}^{+\infty}(f(s))^{-1/k}ds-(f(a))^{1/k}\int_{a}^{+\infty}(f(s))^{-1/k}ds+t-a.
\end{split}
\]
It follows by the l'Hospital's rule that
\[
\lim_{t\rightarrow+\infty}\frac{(f(t))^{1/k}}{t}\int_{t}^{+\infty}(f(s))^{-1/k}ds=\lim_{t\rightarrow+\infty}I(t)-1=C_{f}^{+\infty}-1\geq0.
\]
So, we obtain $\mathbf{(i)}$ holds.

\noindent $\mathbf{(ii)}$ Necessity.  A straightforward calculation
shows that
\begin{equation}\label{2f1}
\lim_{t\rightarrow+\infty}\frac{f'(t)t}{f(t)}=\lim_{t\rightarrow+\infty}\frac{k((f(t))^{1/k})'\int_{t}^{+\infty}(f(s))^{-1/k}ds}{\frac{(f(t))^{1/k}}{t}\int_{t}^{+\infty}(f(s))^{-1/k}ds}=\frac{kC_{f}^{+\infty}}{C_{f}^{+\infty}-1}.
\end{equation}
It follows by Proposition \ref{proposition3.5} that $f\in
NRV_{\gamma}$ with
$\gamma=\frac{kC_{f}^{+\infty}}{C_{f}^{+\infty}-1}$.

\noindent  Sufficiency. 
 By Proposition \ref{2.2}, we see that there exist $a_{0}>0$ and
$L_{+\infty}\in NRV_{0}\cap C^{1}[a_{0}, +\infty)$ such that
\[
f(t)=t^{\gamma}L_{+\infty}(t),\,t\in[a_{0}, +\infty).
\]
By using Proposition \ref{HUR} $\mathbf{(i)}$ and a straightforward
calculation, we obtain
\[
\begin{split}
&\lim_{t\rightarrow+\infty}(t^{\gamma/k}(L_{+\infty}(t))^{1/k})'\int_{t}^{+\infty}s^{-\gamma/k}(L_{+\infty}(s))^{-1/k}ds\\
=&\lim_{t\rightarrow+\infty}\bigg(\frac{\gamma}{\gamma-k}+\frac{tL'_{+\infty}(t)}{(\gamma-k)L_{+\infty}(t)}\bigg)=\frac{\gamma}{\gamma-k}=C_{f}^{+\infty}.
\end{split}
\]
$\mathbf{(iii)}$ It follows by the similar calculation as
\eqref{2f1} that
\[
\lim_{t\rightarrow+\infty}\frac{tf'(t)}{f(t)}=+\infty.
\]
Therefore, for an arbitrary $\gamma>0$, there exists $t_{0}>0$ such
that
\[
\frac{f'(t)}{f(t)}>(\gamma+1)t^{-1},\,t\in[t_{0}, +\infty).
\]
Integrate it from $t_{0}$ to $t>t_{0}$, we obtain
\[
\ln f(t)-\ln f(t_{0})>(\gamma+1)(\ln t-\ln t_{0}),\,t>t_{0},
\]
i.e.,
\[
\frac{f(t)}{t^{\gamma}}>\frac{f(t_{0})}{t_{0}^{\gamma+1}}t,\,t>t_{0}.
\]
Letting $t\rightarrow +\infty$, we obtain $\mathbf{(iii)}$ holds.
\end{proof}
\begin{lemma}\label{lemma22}
Let $f$ satisfy $\mathbf{(S_{1})}$ and $\mathbf{(f_{1})}$, then
\begin{description}
\item[$\mathbf{(i)}$] if $\mathbf{(f_{3})}$ holds, then $C_{f}^{0}\geq
1$ and
$
\lim_{t\rightarrow0^{+}}\frac{(f(t))^{1/k}}{t}\int_{t}^{+\infty}(f(s))^{-1/k}ds=C_{f}^{0}-1;
$
\item[$\mathbf{(ii)}$] $\mathbf{(f_{3})}$ holds with $C_{f}^{0}>
1$ if and only if $f\in NRVZ_{\gamma}$ with
$\gamma=\frac{kC_{f}^{0}}{C_{f}^{0}-1}$;
\item[$\mathbf{(iii)}$] if $\mathbf{(f_{3})}$ holds with
$C_{f}^{0}=1$, then for any $\gamma>0$, it holds $\lim_{t\rightarrow0^{+}}\frac{f(t)}{t^{\gamma}}=0$.
\end{description}
\end{lemma}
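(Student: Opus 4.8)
\textbf{Proof proposal for Lemma \ref{lemma22}.}

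The plan is to mirror the proof of Lemma \ref{lemma21}, replacing the limit $t\to+\infty$ by $t\to 0^{+}$ throughout, since the condition $\mathbf{(f_{3})}$ (which includes $\int_{0}^{1}(f(s))^{-1/k}ds=+\infty$) is precisely the analogue at zero of $\mathbf{(f_{2})}$ at infinity. Write $I(t)=((f(t))^{1/k})'\int_{t}^{+\infty}(f(s))^{-1/k}ds$ as before. For part $\mathbf{(i)}$, I would integrate $I$ from a variable lower endpoint $t$ (with $t$ small) up to a fixed $a>0$ and integrate by parts exactly as in Lemma \ref{lemma21}, obtaining
\[
\int_{t}^{a}I(s)ds=(f(a))^{1/k}\int_{a}^{+\infty}(f(s))^{-1/k}ds-(f(t))^{1/k}\int_{t}^{+\infty}(f(s))^{-1/k}ds+a-t.
\]
Because $f$ satisfies $\mathbf{(S_{1})}$ with $f(0)=0$ and $\int_{0}^{1}(f(s))^{-1/k}ds=+\infty$, the term $(f(t))^{1/k}\int_{t}^{+\infty}(f(s))^{-1/k}ds$ tends to $+\infty$ as $t\to 0^{+}$ — more precisely, $(f(t))^{1/k}\to 0$ while the integral blows up, so one should rather divide by $t$ and apply l'Hospital's rule: $\lim_{t\to 0^{+}}\frac{(f(t))^{1/k}}{t}\int_{t}^{+\infty}(f(s))^{-1/k}ds=\lim_{t\to 0^{+}}\big(I(t)-1\big)=C_{f}^{0}-1$, where the sign of the numerator forces $C_{f}^{0}-1\geq 0$, i.e. $C_{f}^{0}\geq 1$.

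For part $\mathbf{(ii)}$, the necessity direction computes
\[
\lim_{t\to 0^{+}}\frac{tf'(t)}{f(t)}=\lim_{t\to 0^{+}}\frac{k((f(t))^{1/k})'\int_{t}^{+\infty}(f(s))^{-1/k}ds}{\frac{(f(t))^{1/k}}{t}\int_{t}^{+\infty}(f(s))^{-1/k}ds}=\frac{kC_{f}^{0}}{C_{f}^{0}-1},
\]
using part $\mathbf{(i)}$ for the denominator, and then Proposition \ref{proposition3.5} identifies $f\in NRVZ_{\gamma}$ with $\gamma=\frac{kC_{f}^{0}}{C_{f}^{0}-1}$. For sufficiency, use Proposition \ref{proposition4.6} to write $f(t)=t^{\gamma}L_{0}(t)$ near zero with $L_{0}\in NRVZ_{0}$; since $\gamma>k$ here (which is equivalent to $C_{f}^{0}>1$), apply Proposition \ref{HUR} $\mathbf{(ii)}$ (asymptotics of $\int_{t}^{a_{1}}s^{\rho}L_{1}(s)ds$ at zero, valid for $\rho=-\gamma/k<-1$) to evaluate $\int_{t}^{+\infty}(f(s))^{-1/k}ds\sim \frac{k}{\gamma-k}t^{1-\gamma/k}(L_{0}(t))^{-1/k}$, and conclude that $I(t)\to\frac{\gamma}{\gamma-k}=C_{f}^{0}$.

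For part $\mathbf{(iii)}$, when $C_{f}^{0}=1$ the same quotient computation as above gives $\lim_{t\to 0^{+}}\frac{tf'(t)}{f(t)}=+\infty$, so for any $\gamma>0$ there is $t_{0}>0$ with $f'(t)/f(t)>(\gamma+1)t^{-1}$ on $(0,t_{0})$; integrating from $t$ to $t_{0}$ yields $\ln f(t_{0})-\ln f(t)>(\gamma+1)(\ln t_{0}-\ln t)$, hence $f(t)/t^{\gamma}<\big(f(t_{0})/t_{0}^{\gamma+1}\big)t$, and letting $t\to 0^{+}$ gives the claim. The only point requiring genuine care — the main obstacle — is the sufficiency half of $\mathbf{(ii)}$: one must verify that the integrability $\int_{0}^{1}(f(s))^{-1/k}ds=+\infty$ forces $\gamma\geq k$ and that the strict inequality $C_{f}^{0}>1$ corresponds to $\gamma>k$ (not $\gamma=k$, which would be the borderline slowly-varying-correction case), so that Proposition \ref{HUR} $\mathbf{(ii)}$ applies with exponent strictly below $-1$; everything else is a routine transcription of the proof of Lemma \ref{lemma21} from $+\infty$ to $0^{+}$.
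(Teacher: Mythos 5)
Your parts $\mathbf{(ii)}$ and $\mathbf{(iii)}$ follow the paper's proof essentially step for step, and the sufficiency computation in $\mathbf{(ii)}$ (using Proposition \ref{proposition4.6} to write $f(t)=t^{\gamma}L_{0}(t)$ near zero and Proposition \ref{HUR} $\mathbf{(ii)}$ with exponent $-\gamma/k<-1$) is the same as the paper's, including the correct observation that $\gamma>k$ iff $C_{f}^{0}>1$.

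Part $\mathbf{(i)}$, however, contains a genuine gap. You assert that $(f(t))^{1/k}\int_{t}^{+\infty}(f(s))^{-1/k}\,ds$ tends to $+\infty$ as $t\to 0^{+}$; this is false. The product is indeed an indeterminate form $0\cdot\infty$, but it in fact tends to $0$, and establishing this is the whole point of the preliminary step in the paper's proof, which invokes Lemma \ref{lemma2j} (the Lazer--McKenna lemma, applied to $g(s)=(f(s))^{-1/k}$, which is decreasing and blows up at $0$ because $f$ is increasing with $f(0)=0$). Without this fact your subsequent application of l'Hospital's rule to the quotient $\frac{(f(t))^{1/k}\int_{t}^{+\infty}(f(s))^{-1/k}\,ds}{t}$ is not justified: the denominator tends to $0$, so l'Hospital requires the numerator to tend to $0$ as well, giving the $\tfrac{0}{0}$ form. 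If the numerator tended to $+\infty$ as you first wrote, the quotient would be $+\infty$, contradicting the finite value $C_{f}^{0}-1$ you claim. And if you only know the numerator's derivative converges (which is all your argument provides), you can only conclude that the numerator has a finite limit $h_{0}\geq0$, not that $h_{0}=0$; if $h_{0}>0$ the quotient again blows up. So the limit $\lim_{t\to0^{+}}(f(t))^{1/k}\int_{t}^{+\infty}(f(s))^{-1/k}\,ds=0$ must be established first, exactly as the paper does via Lemma \ref{lemma2j}, before the l'Hospital step (or equivalently before the integration by parts from $0$ to $t$, whose boundary term at $0$ needs that same vanishing). This is the one place where the transcription from $t\to+\infty$ (Lemma \ref{lemma21}) to $t\to 0^{+}$ is not mechanical: at $+\infty$ the analogous boundary term is finite by $\mathbf{(f_{1})}$, while at $0^{+}$ the integral $\int_{t}^{+\infty}(f(s))^{-1/k}\,ds$ diverges and the cancellation must be proved.
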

\begin{proof}
\noindent $\mathbf{(i)}$ 
By Lemma \ref{lemma2j}, we see that
\[
\lim_{t\rightarrow0^{+}}(f(t))^{1/k}\int_{t}^{+\infty}(f(s))^{-1/k}ds=0.
\]
Integrate $I$ from $0$ to $t>0$ and integrate by parts, we obtain
that
\[
\int_{0}^{t}I(s)ds=\int_{0}^{t}((f(s))^{1/k})'\int_{s}^{+\infty}(f(\tau))^{-1/k}d\tau
ds=(f(t))^{1/k}\int_{t}^{+\infty}(f(s))^{-1/k}ds+t,
\]
It follows by the l'Hospital's rule that
\[
0\leq\lim_{t\rightarrow0^{+}}\frac{(f(t))^{1/k}}{t}\int_{t}^{+\infty}(f(s))^{-1/k}ds=\lim_{t\rightarrow0^{+}}I(t)-1=C_{f}^{0}-1.
\]
So, we obtain $\mathbf{(i)}$ holds.

\noindent $\mathbf{(ii)}$ Necessity. A straightforward calculation
shows that
\begin{equation}\label{2f2}
\lim_{t\rightarrow0^{+}}\frac{tf'(t)}{f(t)}=\lim_{t\rightarrow0^{+}}\frac{k((f(t))^{1/k})'\int_{t}^{+\infty}(f(s))^{-1/k}ds}{\frac{(f(t))^{1/k}}{t}\int_{t}^{+\infty}(f(s))^{-1/k}ds}=\frac{kC_{f}^{0}}{C_{f}^{0}-1}.
\end{equation}
It follows by Proposition \ref{proposition3.5} that $f\in
NRVZ_{\gamma}$ with $\gamma=\frac{kC_{f}^{0}}{C_{f}^{0}-1}$.

\noindent Sufficiency. 
By Proposition \ref{proposition4.6}, we see that there exist
$a_{0}>0$ and $L_{0}\in NRVZ_{0}\cap C^{1}(0, a_{0}]$ such that
\[
f(t)=t^{\gamma}L_{0}(t),\,t\in(0, a_{0}].
\]
A straightforward calculation shows that

\begin{equation}\label{2f9}
\begin{split}
&((f(t))^{1/k})'\int_{t}^{+\infty}(f(s))^{-1/k}ds\\
&=t^{\frac{\gamma-k}{k}}\bigg(\frac{\gamma}{k}(L_{0}(t))^{1/k}+\frac{1}{k}(L_{0}(t))^{1/k}\frac{tL_{0}'(t)}{L_{0}(t)}\bigg)\int_{t}^{a_{0}}s^{-\gamma/k}(L_{0}(s))^{-1/k}ds\\
&+t^{\frac{\gamma-k}{k}}\bigg(\frac{\gamma}{k}(L_{0}(t))^{1/k}+\frac{1}{k}(L_{0}(t))^{1/k}\frac{tL_{0}'(t)}{L_{0}(t)}\bigg)\int_{a_{0}}^{+\infty}(f(s))^{-1/k}ds.
\end{split}
\end{equation}
By $\mathbf{(f_{1})}$ and Proposition \ref{p3} $\mathbf{(i)}$, we
see that
\begin{equation}\label{2f10}
\lim_{t\rightarrow0^{+}}t^{\frac{\gamma-k}{k}}\bigg(\frac{\gamma}{k}(L_{0}(t))^{1/k}+\frac{1}{k}(L_{0}(t))^{1/k}\frac{tL_{0}'(t)}{L_{0}(t)}\bigg)\int_{a_{0}}^{+\infty}(f(s))^{-1/k}=0.
\end{equation}
\eqref{2f9}-\eqref{2f10} combined with Proposition \ref{HUR}
$\mathbf{(ii)}$ imply that
\[
\begin{split}
&\lim_{t\rightarrow0^{+}}(t^{\gamma/k}(L_{0}(t))^{1/k})'\int_{t}^{+\infty}s^{-\gamma/k}(L_{0}(s))^{1/k}ds\\
=&\lim_{t\rightarrow0^{+}}t^{\frac{\gamma-k}{k}}\bigg(\frac{\gamma}{k}(L_{0}(t))^{1/k}+\frac{1}{k}(L_{0}(t))^{1/k}\frac{tL_{0}'(t)}{L_{0}(t)}\bigg)\frac{k}{\gamma-k}t^{\frac{k-\gamma}{k}}(L_{0}(t))^{-1/k}
=\frac{\gamma}{\gamma-k}=C_{f}^{0}.
\end{split}
\]
$\mathbf{(iii)}$ It follows by the similar calculation as
\eqref{2f2} that
\[
\lim_{t\rightarrow0^{+}}\frac{tf'(t)}{f(t)}=+\infty.
\]
Therefore, for any $\gamma>0$, there exists $t_{0}>0$ such that
\[
\frac{f'(t)}{f(t)}>(\gamma+1)t^{-1},\,t\in(0, t_{0}].
\]
Integrate it from $t>0$ to $t_{0}$, we obtain
\[
\ln f(t_{0})-\ln f(t)>(\gamma+1)(\ln t_{0}-\ln t),\,t\in(0, t_{0}],
\]
i.e.,
\[
f(t)t^{-\gamma}<\frac{f(t_{0})t}{t_{0}^{\gamma+1}},\,t\in(0, t_{0}].
\]
Letting $t\rightarrow0$, we obtain $\mathbf{(iii)}$ holds.
\end{proof}
\begin{lemma}\label{lemma23}
Let $f$ satisfy $\mathbf{(S_{01})}$ and $\mathbf{(f_{1})}$, then
\begin{description}
\item[$\mathbf{(i)}$]if $\mathbf{(f_{4})}$ holds, then $C_{f}^{-\infty}\leq
1$ and
$
\lim_{t\rightarrow-\infty}\frac{(f(t))^{1/k}}{t}\int_{t}^{+\infty}(f(s))^{-1/k}ds=C_{f}^{-\infty}-1;
$
\item[$\mathbf{(ii)}$] $\mathbf{(f_{4})}$ holds with
$C_{f}^{-\infty}<1$ if and only if $f$ is normalized regularly
varying at negative infinity with index
$\frac{kC_{f}^{-\infty}}{C_{f}^{-\infty}-1}$;
\item[$\mathbf{(iii)}$] if $\mathbf{(f_{4})}$ holds with
$C_{f}^{-\infty}=1$, then for any $\gamma>0$, it holds
$\lim_{t\rightarrow-\infty}\frac{f(t)}{(-t)^{-\gamma}}=0$.
\end{description}
\end{lemma}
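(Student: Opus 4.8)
The plan is to mimic, almost line for line, the proofs of Lemmas~\ref{lemma21} and~\ref{lemma22}, with the bookkeeping adapted to the fact that now $t\to-\infty$ (so $t<0$) and with the Karamata theory invoked at negative infinity (Propositions~\ref{proposition4.7} and~\ref{proposition3.5}).

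For $\mathbf{(i)}$, fix any $a$ in the domain of $f$, integrate $I$ over $[t,a]$ for $t<a$, and integrate by parts exactly as in the proof of Lemma~\ref{lemma21}$\mathbf{(i)}$ to get
\[
\int_{t}^{a}I(s)\,ds=(f(a))^{1/k}\int_{a}^{+\infty}(f(s))^{-1/k}ds-(f(t))^{1/k}\int_{t}^{+\infty}(f(s))^{-1/k}ds+(a-t).
\]
Solving for $(f(t))^{1/k}\int_{t}^{+\infty}(f(s))^{-1/k}ds$, dividing by $t$ and letting $t\to-\infty$, the term coming from $a$ tends to $0$, $(a-t)/t\to-1$, and l'Hospital's rule (using $I(t)\to C_{f}^{-\infty}$) gives $\big(\int_{t}^{a}I(s)\,ds\big)/t\to-C_{f}^{-\infty}$, whence
\[
\lim_{t\to-\infty}\frac{(f(t))^{1/k}}{t}\int_{t}^{+\infty}(f(s))^{-1/k}ds=C_{f}^{-\infty}-1.
\]
Since for $t<0$ the left-hand side is a product of two positive factors times $1/t<0$, hence negative, the limit is $\le 0$, i.e. $C_{f}^{-\infty}\le1$.

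For $\mathbf{(ii)}$, \emph{necessity} follows from the identity used in \eqref{2f1}--\eqref{2f2},
\[
\frac{tf'(t)}{f(t)}=\frac{k\,I(t)}{\dfrac{(f(t))^{1/k}}{t}\displaystyle\int_{t}^{+\infty}(f(s))^{-1/k}ds},
\]
whose numerator tends to $kC_{f}^{-\infty}$ by $\mathbf{(f_{4})}$ and whose denominator tends to the nonzero number $C_{f}^{-\infty}-1$ by $\mathbf{(i)}$ (using $C_{f}^{-\infty}<1$); thus $tf'(t)/f(t)\to kC_{f}^{-\infty}/(C_{f}^{-\infty}-1)$ and Proposition~\ref{proposition3.5} gives that $f$ is normalized regularly varying at negative infinity with index $kC_{f}^{-\infty}/(C_{f}^{-\infty}-1)$. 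For \emph{sufficiency}, set $\gamma:=kC_{f}^{-\infty}/(C_{f}^{-\infty}-1)$, which is $<0$ because $0<C_{f}^{-\infty}<1$, and use Proposition~\ref{proposition4.7} to write $f(t)=(-t)^{\gamma}L_{-\infty}(t)$ for $t\le a_{0}<0$, with $L_{-\infty}$ normalized slowly varying at negative infinity. Split $\int_{t}^{+\infty}=\int_{t}^{a_{0}}+\int_{a_{0}}^{+\infty}$: the second piece is a finite constant by $\mathbf{(f_{1})}$ and is negligible because $((f(t))^{1/k})'\to0$ (by Proposition~\ref{p3}$\mathbf{(ii)}$, since $\gamma/k-1<0$), while in the first piece the substitution $s\mapsto-s$ turns it into an integral at $+\infty$ to which Proposition~\ref{HUR}$\mathbf{(iii)}$ applies (as $-\gamma/k>-1$), yielding $\int_{t}^{a_{0}}(f(s))^{-1/k}ds\sim\frac{k}{k-\gamma}(-t)^{(k-\gamma)/k}(L_{-\infty}(t))^{-1/k}$. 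Together with
\[
((f(t))^{1/k})'=(-t)^{\gamma/k-1}(L_{-\infty}(t))^{1/k}\Big(-\frac{\gamma}{k}-\frac{1}{k}\frac{tL_{-\infty}'(t)}{L_{-\infty}(t)}\Big)
\]
and $tL_{-\infty}'(t)/L_{-\infty}(t)\to0$, this gives $I(t)\to\gamma/(\gamma-k)$, i.e. $\mathbf{(f_{4})}$ holds with $C_{f}^{-\infty}=\gamma/(\gamma-k)\in(0,1)$, which is consistent with $\gamma=kC_{f}^{-\infty}/(C_{f}^{-\infty}-1)$.

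For $\mathbf{(iii)}$, when $C_{f}^{-\infty}=1$ the identity above has numerator $\to k>0$ and denominator $\to C_{f}^{-\infty}-1=0$ through negative values (it is negative for $t<0$), so $tf'(t)/f(t)\to-\infty$. Hence for any $\gamma>0$ there is $t_{0}<0$ with $f'(t)/f(t)>(\gamma+1)/(-t)$ for $t\le t_{0}$; integrating over $[t,t_{0}]$ gives $f(t)<f(t_{0})(-t_{0})^{\gamma+1}(-t)^{-(\gamma+1)}$, so $f(t)/(-t)^{-\gamma}=f(t)(-t)^{\gamma}<f(t_{0})(-t_{0})^{\gamma+1}(-t)^{-1}\to0$ as $t\to-\infty$. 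The step I expect to be the main obstacle is the sufficiency in $\mathbf{(ii)}$: since the ``Asymptotic Behavior'' Proposition~\ref{HUR} is only stated at $+\infty$ and at $0$, one has to transfer the estimate through the reflection $s\mapsto-s$, identify which tail of $\int_{t}^{+\infty}(f(s))^{-1/k}ds$ is dominant, and carefully track every sign coming from $t<0$.
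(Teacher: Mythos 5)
Your proof is correct and follows essentially the same route as the paper's: integration by parts plus l'Hospital for $\mathbf{(i)}$, the identity $tf'(t)/f(t)=kI(t)\big/\big(\tfrac{(f(t))^{1/k}}{t}\int_t^{+\infty}(f(s))^{-1/k}ds\big)$ together with Propositions~\ref{proposition3.5} and~\ref{proposition4.7} for $\mathbf{(ii)}$ (the paper likewise splits $\int_t^{+\infty}=\int_t^{a_0}+\int_{a_0}^{+\infty}$, kills the constant tail via Proposition~\ref{p3}$\mathbf{(ii)}$, and invokes Proposition~\ref{HUR}$\mathbf{(iii)}$ for the dominant piece, leaving the reflection $s\mapsto -s$ implicit where you spell it out), and the same logarithmic integration argument for $\mathbf{(iii)}$. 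The sign-tracking you flag as the main obstacle is handled the same way in the paper, so there is no genuine divergence.
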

\begin{proof}
\noindent $\mathbf{(i)}$ Take $a\in\mathbb{R}$. Integrate $I$ from
$t$ to $a>t$ and integrate by parts, we obtain
\[
\begin{split}
\int_{t}^{a}I(s)ds&=\int_{t}^{a}((f(s))^{1/k})'\int_{s}^{+\infty}(f(\tau))^{-1/k}d\tau
ds\\
&=(f(a))^{1/k}\int_{a}^{+\infty}(f(s))^{-1/k}ds-(f(t))^{1/k}\int_{t}^{+\infty}(f(s))^{-1/k}ds+a-t.
\end{split}
\]
It follows by the l'Hospital's rule that
\[
0\geq\lim_{t\rightarrow-\infty}\frac{(f(t))^{1/k}}{t}\int_{t}^{+\infty}(f(s))^{-1/k}ds=-1+\lim_{t\rightarrow-\infty}I(t)=-1+C_{f}^{-\infty}.
\]
So, we obtain $\mathbf{(i)}$ holds.

\noindent $\mathbf{(ii)}$ Necessity.  A straightforward calculation
shows that
\begin{equation}\label{2f4}
\lim_{t\rightarrow-\infty}\frac{tf'(t)}{f(t)}=\frac{k((f(t))^{1/k})'\int_{t}^{+\infty}(f(s))^{-1/k}ds}{\frac{(f(t))^{1/k}}{t}\int_{t}^{+\infty}(f(s))^{-1/k}ds}=\frac{kC_{f}^{-\infty}}{C_{f}^{-\infty}-1}.
\end{equation}
It follows by Proposition \ref{proposition3.5} that $f\in
NRVZ_{\gamma}$ with
$\gamma=\frac{kC_{f}^{-\infty}}{C_{f}^{-\infty}-1}$.

\noindent Sufficiency. By Proposition \ref{proposition4.7}, we see
that there exist $a_{0}<0$ and a slowly varying function at negative
infinity $L_{-\infty}\in C^{1}(-\infty, a_{0}]$ such that
\[
f(t)=(-t)^{\gamma}L_{-\infty}(t),\,t\leq a_{0}.
\]
A straightforward calculation shows that
\begin{equation}\label{2f7}
\begin{split}
&((f(t))^{1/k})'\int_{t}^{+\infty}(f(s))^{-1/k}ds\\
&=(-t)^{\frac{\gamma-k}{k}}\bigg(-\frac{\gamma}{k}(L_{-\infty}(t))^{1/k}-\frac{1}{k}(L_{-\infty}(t))^{1/k}\frac{tL'_{-\infty}(t)}{L_{-\infty}(t)}\bigg)\int_{t}^{a_{0}}(-s)^{-\gamma/k}(L_{-\infty}(s))^{-1/k}ds\\
&+(-t)^{\frac{\gamma-k}{k}}\bigg(-\frac{\gamma}{k}(L_{-\infty}(t))^{1/k}-\frac{1}{k}(L_{-\infty}(t))^{1/k}\frac{tL'_{-\infty}(t)}{L_{-\infty}(t)}\bigg)\int_{a_{0}}^{+\infty}(f(s))^{-1/k}ds.
\end{split}
\end{equation}
By $\mathbf{(f_{1})}$ and Proposition \ref{p3} $\mathbf{(ii)}$, we
have
\begin{equation}\label{2f8}
\lim_{t\rightarrow-\infty}(-t)^{\frac{\gamma-k}{k}}\bigg(-\frac{\gamma}{k}(L_{-\infty}(t))^{1/k}-\frac{1}{k}(L_{-\infty}(t))^{1/k}\frac{tL'_{-\infty}(t)}{L_{-\infty}(t)}\bigg)\int_{a_{0}}^{+\infty}(f(s))^{-1/k}ds=0.
\end{equation}
Moreover, by using Proposition \ref{HUR} $\mathbf{(iii)}$, we have
\begin{equation*}
\int_{t}^{a_{0}}(-s)^{-\gamma/k}(L_{-\infty}(s))^{-1/k}ds\sim\frac{k}{k-\gamma}(-t)^{\frac{k-\gamma}{k}}(L_{-\infty}(t))^{-1/k},\,t\rightarrow-\infty.
\end{equation*}
This combined with \eqref{2f7}-\eqref{2f8}  implies that
\[
\begin{split}
&\lim_{t\rightarrow-\infty}((f(t))^{1/k})'\int_{t}^{+\infty}(f(s))^{-1/k}ds\\
=&\lim_{t\rightarrow-\infty}\bigg(\frac{\gamma}{\gamma-k}+\frac{tL_{-\infty}'(t)}{(\gamma-k)L_{-\infty}(t)}\bigg)=\frac{\gamma}{\gamma-k}=C_{f}^{-\infty}.
\end{split}
\]
\noindent $\mathbf{(iii)}$ It follows by the similar calculation as
\eqref{2f4} that
\[
\lim_{t\rightarrow-\infty}\frac{tf'(t)}{f(t)}=-\infty.
\]
Therefore, for any $\gamma>0$, there exists $t_{0}<0$ such that
\[
\frac{f'(t)}{f(t)}>-(\gamma+1)t^{-1},\,t\in(-\infty, t_{0}].
\]
Integrate it from $t$ to $t_{0}$, we obtain
\[
\ln f(t_{0})-\ln f(t)>(\gamma+1)(\ln t-\ln t_{0}),\,t\in(-\infty,
t_{0}],
\]
i.e.,
\[
(-t)^{\gamma}f(t)<f(t_{0})(-t_{0})^{\gamma+1}(-t)^{-1},\,t\in(-\infty,
t_{0}].
\]
Letting $t\rightarrow-\infty$, we obtain $\mathbf{(iii)}$ holds.
\end{proof}
\begin{lemma}\label{lemma24}
Let $f$ satisfy $\mathbf{(S_{1})}$ $($or $\mathbf{(S_{01})}$$)$ and
$\mathbf{(f_{1})}$, $\psi$ be uniquely determined by \eqref{f22},
then
\begin{description}
\item[$\mathbf{(i)}$] $\psi'(t)=-(f(\psi(t)))^{1/k}$ \mbox{ and
}$\psi''(t)=\frac{1}{k}(f(\psi(t)))^{\frac{2-k}{k}}f'(\psi(t)),\,t>0$;
\item[$\mathbf{(ii)}$] if $\mathbf{(f_{2})}$ holds, then
$
\lim_{t\rightarrow0^{+}}\frac{t\psi'(t)}{\psi(t)}=1-C_{f}^{+\infty}
\mbox{ and
}\lim_{t\rightarrow0^{+}}\frac{t\psi''(t)}{\psi'(t)}=-C_{f}^{+\infty};
$
\item[$\mathbf{(iii)}$]if $\mathbf{(S_{1})}$ and $\mathbf{(f_{3})}$
hold, then
$
\lim_{t\rightarrow+\infty}\frac{t\psi'(t)}{\psi(t)}=1-C_{f}^{0}
\mbox{ and }
\lim_{t\rightarrow+\infty}\frac{t\psi''(t)}{\psi'(t)}=-C_{f}^{0};
$
\item[$\mathbf{(iv)}$] if $\mathbf{(S_{01})}$ and $\mathbf{(f_{4})}$
hold, then
$
\lim_{t\rightarrow+\infty}\frac{t\psi'(t)}{\psi(t)}=1-C_{f}^{-\infty}
\mbox{ and
}\lim_{t\rightarrow+\infty}\frac{t\psi''(t)}{\psi'(t)}=-C_{f}^{-\infty}.
$
\end{description}
\end{lemma}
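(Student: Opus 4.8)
The plan is to work directly from the defining relation \eqref{f22}, namely $\int_{\psi(t)}^{+\infty}(f(s))^{-1/k}ds=t$, and differentiate. Differentiating both sides in $t$ gives $-(f(\psi(t)))^{-1/k}\psi'(t)=1$, which yields $\psi'(t)=-(f(\psi(t)))^{1/k}$ at once; this is the first half of $\mathbf{(i)}$. For the second half I would differentiate this identity once more, using the chain rule: $\psi''(t)=-\tfrac1k(f(\psi(t)))^{1/k-1}f'(\psi(t))\psi'(t)$, and then substitute $\psi'(t)=-(f(\psi(t)))^{1/k}$ to get $\psi''(t)=\tfrac1k(f(\psi(t)))^{(2-k)/k}f'(\psi(t))$. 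This part is entirely routine; the only mild point worth noting is that $\mathbf{(S_{1})}$ (or $\mathbf{(S_{01})}$) and $\mathbf{(f_{1})}$ guarantee $f>0$ along the range of $\psi$, so all the fractional powers are well defined and $\psi\in C^2$.

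For parts $\mathbf{(ii)}$–$\mathbf{(iv)}$ the strategy is to rewrite the two quotients in $\mathbf{(i)}$ in terms of the already-analysed function $I$. Using $\mathbf{(i)}$ we have
\[
\frac{t\psi'(t)}{\psi(t)}=-\frac{(f(\psi(t)))^{1/k}}{\psi(t)}\int_{\psi(t)}^{+\infty}(f(s))^{-1/k}ds,
\]
since $t=\int_{\psi(t)}^{+\infty}(f(s))^{-1/k}ds$. For $\mathbf{(ii)}$, note that by item $\mathbf{(i)}$ of the remark after condition $\mathbf{(f_{1})}$ (the three equivalences listed in $\mathbf{(I)}$ of Section 2), $\mathbf{(S_{1})}$ (or $\mathbf{(S_{01})}$) together with $\mathbf{(f_{1})}$–$\mathbf{(f_{2})}$ imply $\psi(t)\to+\infty$ as $t\to0^{+}$; hence the right-hand side converges, by Lemma \ref{lemma21}$\mathbf{(i)}$ with the substitution $t\mapsto\psi(t)$, to $-(C_{f}^{+\infty}-1)=1-C_{f}^{+\infty}$. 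Similarly, again from $\mathbf{(i)}$,
\[
\frac{t\psi''(t)}{\psi'(t)}=\frac{\tfrac1k(f(\psi(t)))^{(2-k)/k}f'(\psi(t))}{-(f(\psi(t)))^{1/k}}\,t
=-\frac1k\,\frac{f'(\psi(t))}{(f(\psi(t)))^{(k-1)/k}}\int_{\psi(t)}^{+\infty}(f(s))^{-1/k}ds
=-\big((f(\psi(t)))^{1/k}\big)'\Big|_{\,\cdot=\psi(t)}\int_{\psi(t)}^{+\infty}(f(s))^{-1/k}ds=-I(\psi(t)),
\]
which tends to $-C_{f}^{+\infty}$ by $\mathbf{(f_{2})}$. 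Parts $\mathbf{(iii)}$ and $\mathbf{(iv)}$ are proved by exactly the same two displays, the only change being that now $t\to+\infty$ forces $\psi(t)\to0^{+}$ (under $\mathbf{(S_{1})}$, $\mathbf{(f_{3})}$) or $\psi(t)\to-\infty$ (under $\mathbf{(S_{01})}$, $\mathbf{(f_{4})}$) by the equivalences $\mathbf{(ii)}$ and $\mathbf{(iii)}$ in $\mathbf{(I)}$ of Section 2; one then invokes Lemma \ref{lemma22}$\mathbf{(i)}$ together with $\mathbf{(f_{3})}$, respectively Lemma \ref{lemma23}$\mathbf{(i)}$ together with $\mathbf{(f_{4})}$, to pass to the limit, obtaining $1-C_{f}^{0}$ and $-C_{f}^{0}$ in case $\mathbf{(iii)}$ and $1-C_{f}^{-\infty}$ and $-C_{f}^{-\infty}$ in case $\mathbf{(iv)}$.

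There is essentially no serious obstacle here: the lemma is a bridge result that translates the asymptotics of $I$ (established in Lemmas \ref{lemma21}–\ref{lemma23}) into asymptotics of the inverse-type function $\psi$. The one place demanding a little care is making sure, in each of $\mathbf{(ii)}$–$\mathbf{(iv)}$, that the argument $\psi(t)$ is driven into the regime ($+\infty$, $0^{+}$, or $-\infty$) where the corresponding limit for $I$ is known, which is precisely why I record the monotonicity/limit equivalences for $\psi$ before taking limits; these follow from $\psi'(t)=-(f(\psi(t)))^{1/k}<0$ and the convergence or divergence of the relevant improper integrals of $(f)^{-1/k}$ encoded in $\mathbf{(f_{1})}$–$\mathbf{(f_{4})}$.
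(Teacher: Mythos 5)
Your proposal is correct and follows essentially the same path as the paper: part $\mathbf{(i)}$ by direct differentiation of \eqref{f22}, and parts $\mathbf{(ii)}$--$\mathbf{(iv)}$ by rewriting $t\psi'/\psi$ and $t\psi''/\psi'$ via $t=\int_{\psi(t)}^{+\infty}(f(s))^{-1/k}\,ds$ and then invoking Lemma \ref{lemma21}$\mathbf{(i)}$, Lemma \ref{lemma22}$\mathbf{(i)}$, Lemma \ref{lemma23}$\mathbf{(i)}$ and conditions $\mathbf{(f_{2})}$--$\mathbf{(f_{4})}$ after the change of variable forced by the limit behavior of $\psi$. The only additions over the paper's proof are your explicit identification $t\psi''/\psi'=-I(\psi(t))$ and the brief justification that $\psi(t)$ enters the correct asymptotic regime, both of which are consistent with what the paper leaves implicit.
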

\begin{proof}
\noindent $\mathbf{(i)}$ By a direct calculation, we obtain
$\mathbf{(i)}$ holds.

\noindent $\mathbf{(ii)}$ By $\mathbf{(i)}$ and Lemma \ref{lemma21}
$\mathbf{(i)}$, we have
\[
\begin{split}
\lim_{t\rightarrow0^{+}}\frac{t\psi'(t)}{\psi(t)}&=-\lim_{t\rightarrow0^{+}}\frac{(f(\psi(t)))^{1/k}}{\psi(t)}\int_{\psi(t)}^{+\infty}(f(s))^{-1/k}ds\\
&=-\lim_{t\rightarrow+\infty}\frac{(f(t))^{1/k}}{t}\int_{t}^{+\infty}(f(s))^{-1/k}ds=1-C_{f}^{+\infty}.
\end{split}
\]
Moreover, by $\mathbf{(i)}$ and $\mathbf{(f_{2})}$, we have
\[
\begin{split}
\lim_{t\rightarrow0^{+}}\frac{t\psi''(t)}{\psi'(t)}&=-\lim_{t\rightarrow0^{+}}\frac{1}{k}(f(\psi(t)))^{\frac{1-k}{k}}f'(\psi(t))\int_{\psi(t)}^{+\infty}(f(s))^{-1/k}ds\\
&=-\lim_{t\rightarrow+\infty}((f(t))^{1/k})'\int_{t}^{+\infty}(f(s))^{-1/k}ds=-C_{f}^{+\infty}.
\end{split}
\]
$\mathbf{(iii)}$ By $\mathbf{(i)}$ and Lemma \ref{lemma22}
$\mathbf{(i)}$, we have
\[
\begin{split}
\lim_{t\rightarrow+\infty}\frac{t\psi'(t)}{\psi(t)}&=-\lim_{t\rightarrow+\infty}\frac{(f(\psi(t)))^{1/k}}{\psi(t)}\int_{\psi(t)}^{+\infty}(f(s))^{-1/k}ds\\
&=-\lim_{t\rightarrow0^{+}}\frac{(f(t))^{1/k}}{t}\int_{t}^{+\infty}(f(s))^{-1/k}ds=1-C_{f}^{0}.
\end{split}
\]
Moreover, by $\mathbf{(i)}$ and $\mathbf{(f_{3})}$, we have
\[
\begin{split}
\lim_{t\rightarrow+\infty}\frac{t\psi''(t)}{\psi'(t)}&=-\lim_{t\rightarrow+\infty}\frac{1}{k}(f(\psi(t)))^{\frac{1-k}{k}}f'(\psi(t))\int_{\psi(t)}^{+\infty}(f(s))^{-1/k}ds\\
&=-\lim_{t\rightarrow0^{+}}((f(t))^{1/k})'\int_{t}^{+\infty}(f(s))^{-1/k}ds=-C_{f}^{0}.
\end{split}
\]
$\mathbf{(iv)}$ By $\mathbf{(i)}$ and Lemma \ref{lemma23}
$\mathbf{(i)}$, we have
\[
\begin{split}
\lim_{t\rightarrow+\infty}\frac{t\psi'(t)}{\psi(t)}&=-\lim_{t\rightarrow+\infty}\frac{(f(\psi(t)))^{1/k}}{\psi(t)}\int_{\psi(t)}^{+\infty}(f(s))^{-1/k}ds\\
&=-\lim_{t\rightarrow
-\infty}\frac{(f(t))^{1/k}}{t}\int_{t}^{+\infty}(f(s))^{-1/k}ds=1-C_{f}^{-\infty}.
\end{split}
\]
Moreover, by $\mathbf{(i)}$ and $\mathbf{(f_{4})}$, we have
\[
\begin{split}
\lim_{t\rightarrow+\infty}\frac{t\psi''(t)}{\psi'(t)}&=-\lim_{t\rightarrow+\infty}\frac{1}{k}(f(\psi(t)))^{\frac{1-k}{k}}f'(\psi(t))\int_{\psi(t)}^{+\infty}(f(s))^{-1/k}ds\\
&=-\lim_{t\rightarrow-\infty}((f(t))^{1/k})'\int_{t}^{+\infty}(f(s))^{-1/k}ds=-C_{f}^{-\infty}.
\end{split}
\]
\end{proof}
\begin{lemma}\label{lemma25}
Let $f$ satisfy $\mathbf{(S_{1})}$ $(\mbox{or }\mathbf{(S_{01})})$
and $\mathbf{(f_{5})}$, $F$ be defined by \eqref{f21},  then
\begin{description}
\item[$\mathbf{(i)}$]$\lim_{t\rightarrow+\infty}\frac{(F(t))^{1/k}}{t}=+\infty$;
\item[$\mathbf{(ii)}$] if $f\in RV_{k}$, then
\[
\lim_{t\rightarrow+\infty}\frac{(F(t))^{1/(k+1)}}{t}\int_{t}^{+\infty}(F(s))^{-1/(k+1)}ds=+\infty
\]
and
\[
\lim_{t\rightarrow+\infty}((F(t))^{1/(k+1)})'\int_{t}^{+\infty}(F(s))^{-1/(k+1)}ds=+\infty.
\]
\end{description}
\end{lemma}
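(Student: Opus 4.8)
The plan is to handle the two parts separately, each time extracting the needed growth from the Keller--Osserman hypothesis $\mathbf{(f_{5})}$; for part $\mathbf{(ii)}$ I would additionally invoke Karamata's theorem (Proposition \ref{HUR}) and the Uniform Convergence Theorem (Proposition \ref{ppp}).

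For part $\mathbf{(i)}$ I would argue as follows. Since $f>0$ on the relevant range (by $\mathbf{(S_{1})}$, resp. $\mathbf{(S_{01})}$), $F$ is strictly increasing there, so $s\mapsto(F(s))^{-1/(k+1)}$ is non-increasing for $s$ large; moreover $\mathbf{(f_{5})}$ forces $\int_{t}^{+\infty}(F(s))^{-1/(k+1)}ds\to0$ as $t\to+\infty$. Bounding the integrand on $[t,2t]$ below by its value at the right endpoint gives $t\,(F(2t))^{-1/(k+1)}\le\int_{t}^{2t}(F(s))^{-1/(k+1)}ds\le\int_{t}^{+\infty}(F(s))^{-1/(k+1)}ds\to0$, hence $(F(2t))^{1/(k+1)}/t\to+\infty$, i.e. $(F(t))^{1/(k+1)}/t\to+\infty$. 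Finally, writing $(F(t))^{1/k}/t=\big((F(t))^{1/(k+1)}/t\big)^{(k+1)/k}\cdot t^{1/k}$ shows that $(F(t))^{1/k}/t\to+\infty$, which is $\mathbf{(i)}$.

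For part $\mathbf{(ii)}$, first suppose $f\in RV_{k}$ and write $f(s)=s^{k}L(s)$ with $L$ slowly varying at positive infinity (Definition \ref{Def2.1}). By Karamata's theorem (Proposition \ref{HUR}$\mathbf{(iii)}$ with $\rho=k>-1$), $\int_{a_{1}}^{t}f(s)\,ds\sim\frac{1}{k+1}t^{k+1}L(t)=\frac{tf(t)}{k+1}$; adding the finite constant $\int_{\varsigma}^{a_{1}}f$ does not change this asymptotics since $\int_{a_{1}}^{t}f\to+\infty$, so $F(t)\sim\frac{tf(t)}{k+1}$ and $F\in RV_{k+1}$. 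Consequently $(F(t))^{1/(k+1)}\sim(k+1)^{-1/(k+1)}t\,(L(t))^{1/(k+1)}$ and $(F(s))^{-1/(k+1)}\sim(k+1)^{1/(k+1)}s^{-1}M(s)$ with $M:=L^{-1/(k+1)}$ slowly varying. Also, from $tf(t)/F(t)\to k+1$ one computes $((F(t))^{1/(k+1)})'=\tfrac{1}{k+1}(F(t))^{-k/(k+1)}f(t)\sim(F(t))^{1/(k+1)}/t$, so the two displayed limits in $\mathbf{(ii)}$ coincide and it suffices to prove $\lim_{t\to+\infty}\frac{(F(t))^{1/(k+1)}}{t}\int_{t}^{+\infty}(F(s))^{-1/(k+1)}ds=+\infty$. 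Using $\mathbf{(f_{5})}$ and the asymptotic equivalence above under tail integration (valid for positive integrands with convergent integral), this product is asymptotic to $\frac{1}{M(t)}\int_{t}^{+\infty}\frac{M(s)}{s}ds$ (the powers of $k+1$ cancel).

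It remains to show $\frac{1}{M(t)}\int_{t}^{+\infty}\frac{M(s)}{s}ds\to+\infty$, and this is the crux. For fixed $\lambda>1$, the Uniform Convergence Theorem (Proposition \ref{ppp}) gives $M(tu)/M(t)\to1$ uniformly for $u\in[1,\lambda]$, so $\int_{t}^{\lambda t}\frac{M(s)}{s}\,ds=M(t)\int_{1}^{\lambda}\frac{M(tu)}{u\,M(t)}\,du\sim M(t)\ln\lambda$; since $\int_{t}^{\lambda t}\frac{M(s)}{s}\,ds\le\int_{t}^{+\infty}\frac{M(s)}{s}\,ds$, this yields $\liminf_{t\to+\infty}\frac{1}{M(t)}\int_{t}^{+\infty}\frac{M(s)}{s}\,ds\ge\ln\lambda$, and letting $\lambda\to+\infty$ gives the claim, finishing $\mathbf{(ii)}$. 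I expect this last step to be the main obstacle: the integral $\int_{t}^{+\infty}(F(s))^{-1/(k+1)}ds$ is the tail of an $RV_{-1}$ function, precisely the borderline case $\rho=-1$ \emph{not} covered by Proposition \ref{HUR}, so Karamata cannot be applied directly and one must exploit uniform convergence of the slowly varying part over a block $[t,\lambda t]$ and then send $\lambda\to+\infty$ (alternatively, one may cite the corresponding tail-integral statement for slowly varying functions in \cite{BGT}).
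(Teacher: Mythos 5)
Your proof is correct, and it takes a genuinely different route from the paper's. For part $\mathbf{(i)}$ the paper argues by contradiction: from the negation it extracts a sequence $s_i$ with $2s_{i-1}\le s_i$ and $(F(s_i))^{-1/k}\ge 1/(s_i c_*)$, and then makes the tail integral diverge by summing over the blocks $[s_{i-1},s_i]$; you instead give a direct one-block estimate $t\,(F(2t))^{-1/(k+1)}\le\int_t^{2t}(F(s))^{-1/(k+1)}\,ds\to0$, which is shorter and yields the stronger statement $(F(t))^{1/(k+1)}/t\to+\infty$ on the way (both arguments exploit the same monotonicity of $F$). For part $\mathbf{(ii)}$ the paper computes $F(t)/(tf(t))\to 1/(k+1)$ by dominated convergence and then applies l'H\^opital to $\int_t^{+\infty}(F(s))^{-1/(k+1)}\,ds\big/\big(t(F(t))^{-1/(k+1)}\big)$, arriving at the limit $\big(\tfrac{tf(t)}{(k+1)F(t)}-1\big)^{-1}\to+\infty$; you instead invoke Karamata's theorem to get $F\in RV_{k+1}$, rewrite $(F(s))^{-1/(k+1)}\sim s^{-1}M(s)$ with $M$ slowly varying, and reduce to showing $M(t)^{-1}\int_t^{+\infty}M(s)s^{-1}\,ds\to+\infty$ via the Uniform Convergence Theorem on blocks $[t,\lambda t]$ and letting $\lambda\to+\infty$. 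Your route is a bit longer but more robust: the paper's l'H\^opital step implicitly needs the quantity $\tfrac{tf(t)}{(k+1)F(t)}-1$, which tends to $0$, to be of eventually one sign so that the limit of the derivative quotient exists in the extended sense, and this is not immediate for a general $f\in RV_k$; your block argument with uniform convergence avoids this delicacy and correctly handles the borderline $\rho=-1$ tail not covered by Proposition \ref{HUR}. Both proofs then obtain the second displayed limit from the first by noting $((F(t))^{1/(k+1)})'\sim (F(t))^{1/(k+1)}/t$.
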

\begin{proof}
\noindent $\mathbf{(i)}$ If $\mathbf{(i)}$ is false, then there
exist constant $c_{*}$ and an increasing sequence of real numbers
$\{s_{i}\}_{i=1}^{+\infty}$ satisfying
$\lim_{i\rightarrow+\infty}s_{i}=+\infty$ and $2s_{i-1}\leq
s_{i},\,i=1,2,\cdot\cdot\cdot$ such that
\[
(F(s_{i}))^{-1/k}\geq 1/(s_{i}c_{*}).
\]
A direct calculation shows that
\[
\begin{split}
+\infty>\int_{s_{0}}^{+\infty}(F(s))^{-1/k}ds
&\geq \sum_{i=1}^{+\infty}\int_{i-1}^{i}(F(s))^{-1/k}ds\\
&\geq \sum_{i=1}^{+\infty}\int_{i-1}^{i}(F(s_{i}))^{-1/k}ds\\
&\geq
\sum_{i=1}^{+\infty}\int_{s_{i-1}}^{s_{i}}1/(s_{i}c_{*})ds=\sum_{i=1}^{+\infty}c_{*}^{-1}\bigg(\frac{s_{i}-s_{i-1}}{s_{i}}\bigg)\geq
\lim_{i\rightarrow+\infty}\frac{i}{2c_{*}}=+\infty.
\end{split}
\]
This is a contradiction. So, $\mathbf{(i)}$ holds.

\noindent $\mathbf{(ii)}$ 
Since
\[
\lim_{t\rightarrow+\infty}\frac{\int_{\upsilon}^{0}f(s)ds}{tf(t)}=0,
\]
by the Lebesgue's dominated convergence theorem, we obtain
\begin{equation}\label{2f5}
\begin{split}
\lim_{t\rightarrow+\infty}\frac{F(t)}{tf(t)}&=\lim_{t\rightarrow+\infty}\bigg(\frac{\int_{\upsilon}^{0}f(s)ds}{tf(t)}+\frac{\int_{0}^{t}f(s)ds}{tf(t)}\bigg)\\
&=\lim_{t\rightarrow+\infty}\frac{\int_{0}^{t}f(s)ds}{tf(t)}=\lim_{t\rightarrow+\infty}\int_{0}^{1}\frac{f(t\tau)}{f(t)}d\tau=\int_{0}^{1}\tau^{k}d\tau=\frac{1}{k+1}.
\end{split}
\end{equation}
It follows by using the l'Hospital's rule that
\begin{equation}\label{2f6}
\begin{split}
&\lim_{t\rightarrow+\infty}\frac{(F(t))^{1/(k+1)}}{t}\int_{t}^{+\infty}(F(s))^{-1/(k+1)}ds\\
=&\lim_{t\rightarrow+\infty}\frac{\int_{t}^{+\infty}(F(s))^{-1/(k+1)}ds}{t(F(t))^{-1/(k+1)}}=\lim_{t\rightarrow+\infty}\bigg(\frac{1}{k+1}\frac{tf(t)}{F(t)}-1\bigg)^{-1}=+\infty.
\end{split}
\end{equation}
Combining \eqref{2f5} and \eqref{2f6}, we have
\[
\begin{split}
&\lim_{t\rightarrow+\infty}((F(t))^{1/(k+1)})'\int_{t}^{+\infty}(F(s))^{-1/(k+1)}ds\\
=&\lim_{t\rightarrow+\infty}\frac{\frac{1}{k+1}\frac{(F(t))^{1/(k+1)}}{t}\int_{t}^{+\infty}(F(s))^{-1/(k+1)}ds}{\frac{F(t)}{tf(t)}}=+\infty.
\end{split}
\]
\end{proof}
\begin{lemma}\label{lemma26}
Let $f$ satisfy $\mathbf{(S_{1})}$ and $\mathbf{(f_{5})}$, $F$ be
defined by \eqref{f21}, then
\begin{description}
\item[$\mathbf{(i)}$]if $\mathbf{(f_{7})}$ holds, then
$E_{f}^{0}\geq1$ and $ \lim_{t\rightarrow0^{+}}
\frac{(F(t))^{1/(k+1)}}{t}\int_{t}^{+\infty}(F(s))^{-1/(k+1)}ds=E_{f}^{0}-1;
$
\item[$\mathbf{(ii)}$] if $\mathbf{(f_{7})}$ holds with
$E_{f}^{0}>1$ if and only if $f\in RVZ_{\gamma}$ with
$\gamma=\frac{kE_{f}^{0}+1}{E_{f}^{0}-1}$;
\item[$\mathbf{(iii)}$] if $\mathbf{(f_{7})}$ holds with
$E_{f}^{0}=1$, then for any $\gamma>0$, it holds
$\lim_{t\rightarrow0^{+}}\frac{F(t)}{t^{\gamma}}=0$.
\end{description}
\end{lemma}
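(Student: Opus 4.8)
The plan is to follow the scheme of Lemma~\ref{lemma22}, replacing the triple $(f,k,I)$ by $(F,k+1,J)$ throughout, and then adding one extra step in $\mathbf{(ii)}$ to transfer the regular variation from $F$ back to $f=F'$. For $\mathbf{(i)}$ I would first note that, since $F$ is increasing with $F(0^{+})=0$ (by $\mathbf{(S_{1})}$) and $\int_{0}^{1}(F(s))^{-1/(k+1)}ds=+\infty$ (this is \eqref{inf2}), the function $g(s)=(F(s))^{-1/(k+1)}$ satisfies the hypotheses of Lemma~\ref{lemma2j}; hence $(F(t))^{1/(k+1)}\int_{t}^{1}(F(s))^{-1/(k+1)}ds\to0$, and since $(F(t))^{1/(k+1)}\to0$ while $\int_{1}^{+\infty}(F(s))^{-1/(k+1)}ds<+\infty$ by $\mathbf{(f_{5})}$, also $(F(t))^{1/(k+1)}\int_{t}^{+\infty}(F(s))^{-1/(k+1)}ds\to0$. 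Integrating $J$ over $(0,t)$ and integrating by parts would give
\[
\int_{0}^{t}J(s)\,ds=(F(t))^{1/(k+1)}\int_{t}^{+\infty}(F(s))^{-1/(k+1)}ds+t,
\]
so l'Hospital's rule yields $\lim_{t\to0^{+}}\frac{(F(t))^{1/(k+1)}}{t}\int_{t}^{+\infty}(F(s))^{-1/(k+1)}ds=\lim_{t\to0^{+}}J(t)-1=E_{f}^{0}-1$, and nonnegativity of the left-hand side forces $E_{f}^{0}\ge1$.

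For $\mathbf{(ii)}$ I would write $J(t)=\frac{1}{k+1}\,\frac{f(t)}{(F(t))^{k/(k+1)}}\int_{t}^{+\infty}(F(s))^{-1/(k+1)}ds$ and divide by the quantity handled in $\mathbf{(i)}$; when $E_{f}^{0}>1$ this gives $\lim_{t\to0^{+}}\frac{tF'(t)}{F(t)}=\lim_{t\to0^{+}}\frac{tf(t)}{F(t)}=\frac{(k+1)E_{f}^{0}}{E_{f}^{0}-1}=:\mu$, so $F\in NRVZ_{\mu}$ by Proposition~\ref{proposition3.5}. Writing $F(t)=t^{\mu}\hat L(t)$ with $\hat L$ normalized slowly varying at zero (Proposition~\ref{proposition4.6}) and differentiating, $f(t)=t^{\mu-1}\hat L(t)\bigl(\mu+t\hat L'(t)/\hat L(t)\bigr)$, and since $t\hat L'(t)/\hat L(t)\to0$ the bracketed factor is slowly varying at zero, so $f\in RVZ_{\mu-1}$ with $\mu-1=\frac{kE_{f}^{0}+1}{E_{f}^{0}-1}=\gamma$. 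For the converse I would assume $f\in RVZ_{\gamma}$, write $f(t)=t^{\gamma}L_{0}(t)$, note $\gamma-k=\frac{k+1}{E_{f}^{0}-1}>0$ so $\gamma>k>-1$, use Proposition~\ref{HUR}~$\mathbf{(iv)}$ to get $F(t)\sim(\gamma+1)^{-1}t^{\gamma+1}L_{0}(t)$, and then Proposition~\ref{HUR}~$\mathbf{(ii)}$ (applicable because $\frac{\gamma+1}{k+1}=\frac{E_{f}^{0}}{E_{f}^{0}-1}>1$) to get $\int_{t}^{+\infty}(F(s))^{-1/(k+1)}ds\sim(\gamma+1)^{1/(k+1)}\frac{k+1}{\gamma-k}t^{(k-\gamma)/(k+1)}(L_{0}(t))^{-1/(k+1)}$; inserting these asymptotics into $J(t)$, the powers of $t$ and of $L_{0}$ cancel and one is left with $\lim_{t\to0^{+}}J(t)=\frac{\gamma+1}{\gamma-k}$, which is exactly $E_{f}^{0}$ for the stated $\gamma$, while the same asymptotics (with exponent $-\frac{\gamma+1}{k+1}<-1$) show $\int_{0}^{1}(F(s))^{-1/(k+1)}ds=+\infty$, i.e.\ \eqref{inf2}.

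For $\mathbf{(iii)}$, when $E_{f}^{0}=1$ the ratio used in $\mathbf{(ii)}$ has numerator $(k+1)J(t)\to k+1$ and denominator tending to $0^{+}$ by $\mathbf{(i)}$, so $\frac{tF'(t)}{F(t)}\to+\infty$; fixing $\gamma>0$ one finds $t_{0}$ with $F'(t)/F(t)>(\gamma+1)/t$ on $(0,t_{0}]$, and integrating from $t$ to $t_{0}$ gives $F(t)<F(t_{0})t_{0}^{-(\gamma+1)}t^{\gamma+1}$, whence $F(t)/t^{\gamma}\to0$ as $t\to0^{+}$.

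The hard part will be the sufficiency direction of $\mathbf{(ii)}$: one must trace the asymptotics of $F(t)$ and of $\int_{t}^{+\infty}(F(s))^{-1/(k+1)}ds$ as $t\to0^{+}$ through several applications of Karamata's integration theorem (Proposition~\ref{HUR}), checking each time that the relevant exponent lies in the range for which the integral is controlled by its behaviour at the lower endpoint, and keeping all multiplicative constants exact so that $\lim_{t\to0^{+}}J(t)$ collapses precisely to $E_{f}^{0}$. A secondary delicate point is the step in the necessity direction that promotes $F\in NRVZ_{\mu}$ to $f=F'\in RVZ_{\mu-1}$; I would carry it out via the representation theorem as indicated above, or alternatively invoke the monotone density theorem.
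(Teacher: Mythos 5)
Your proposal is correct and follows essentially the same route as the paper: part $\mathbf{(i)}$ by integration by parts plus l'Hospital after invoking Lemma~\ref{lemma2j}; part $\mathbf{(ii)}$ necessity by first deducing $F\in NRVZ_{\frac{(k+1)E_{f}^{0}}{E_{f}^{0}-1}}$ via Proposition~\ref{proposition3.5} and then differentiating the representation to recover $f\in RVZ_{\gamma}$; part $\mathbf{(ii)}$ sufficiency via Karamata integration (Proposition~\ref{HUR}) to trace the asymptotics of $F$ and of $\int_{t}^{+\infty}(F(s))^{-1/(k+1)}ds$; and part $\mathbf{(iii)}$ by the integration argument of Lemma~\ref{lemma22}$\mathbf{(iii)}$ applied to $F$. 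The only cosmetic difference is that for the sufficiency direction the paper obtains $F\in NRVZ_{\gamma+1}$ by dominated convergence ($\lim_{t\to0^{+}}F(t)/(tf(t))=1/(\gamma+1)$) before invoking the representation theorem, whereas you reach the same asymptotic $F(t)\sim(\gamma+1)^{-1}t^{\gamma+1}L_{0}(t)$ directly from Proposition~\ref{HUR}~$\mathbf{(iv)}$; both are equivalent applications of the Karamata machinery.
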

\begin{proof}
\noindent $\mathbf{(i)}$ We see
\[
J(t)=((F(t))^{1/(k+1)})'\int_{t}^{+\infty}(F(s))^{-1/(k+1)}ds,\,t>0.
\]
By Lemma \ref{lemma2j}, we see that
\[
\lim_{t\rightarrow0^{+}} (F(t))^{1/(k+1)}\int_{t}^{+\infty}(F(s))^{-1/(k+1)}ds=0.
\]
Integrate $J$  from $0$ to $t>0$ and integrate by parts, we obtain
\[
\int_{0}^{t}J(s)ds=(F(t))^{1/(k+1)}\int_{t}^{+\infty} (F(s))^{-1/(k+1)} ds+t,\,t>0.
\]
It follows by the l'Hospital's rule that
\[
0\leq\lim_{t\rightarrow0^{+}} \frac{(F(t))^{1/(k+1)}}{t}
\int_{t}^{+\infty}(F(s))^{-1/(k+1)}ds=\lim_{t\rightarrow0^{+}}J(t)-1=E_{f}^{0}-1.
\]
So, we obtain $\mathbf{(i)}$ holds.

\noindent $\mathbf{(ii)}$ Necessity. A straightforward calculation
shows that
\[
\lim_{t\rightarrow0^{+}}\frac{F(t)}{tf(t)}=\frac{1}{k+1}\frac{(F(t))^{1/(k+1)}\int_{t}^{+\infty}(F(s))^{-1/(k+1)}ds}{tJ(t)}
=\frac{E_{f}^{0}-1}{(k+1)E_{f}^{0}}.
\]
It follows by Proposition \ref{proposition3.5} that $F\in
NRVZ_{\frac{(k+1)E_{f}^{0}}{E_{f}^{0}-1}}$. This implies that there
exist $a_{1}>0$ and $\hat{L}_{0}\in NRVZ_{0}\cap C^{1}(0, a_{1}]$
such that
\[
F(t)=t^{\frac{(k+1)E_{f}^{0}}{E_{f}^{0}-1}}\hat{L}_{0}(t),\,t\in(0,
a_{1}].
\]
By a simple calculation, we obtain
\[
f(t)=t^{\gamma}\bigg(\frac{(k+1)E_{f}^{0}}{E_{f}^{0}-1}
+\frac{t\hat{L}'_{0}(t)}{\hat{L}_{0}(t)}\bigg)\hat{L}_{0}(t) \mbox{
with }\gamma=\frac{kE_{f}^{0}+1}{E_{f}^{0}-1}.
\]
It follows by Proposition \ref{proposition4.6} that $f\in
NRV_{\gamma}$.

\noindent Sufficiency. Since $f\in RVZ_{\gamma}$, by the Lebesgue's
dominated convergence
 theorem, we obtain
\[
\lim_{t\rightarrow0^{+}}\frac{F(t)}{tf(t)}=\lim_{t\rightarrow0^{+}}\frac{\int_{0}^{t}f(s)ds}{tf(t)}=\lim_{t\rightarrow0^{+}}\int_{0}^{1}\frac{f(t\tau)}{f(t)}d\tau=\int_{0}^{1}\tau^{\gamma}d\tau=\frac{1}{\gamma+1}.
\]
So, $F\in NRVZ_{\gamma+1}$. By Proposition \ref{proposition4.6}, we
see that there exist $a_{1}>0$  and $\hat{L}_{0}\in C^{1}(0, a_{1}]$
such that
\[
F(t)=t^{\gamma+1}\hat{L}_{0}(t),\,t\in(0, a_{1}].
\]
A straightforward calculation shows that
\begin{equation}\label{2f11}
\begin{split}
&((F(t))^{1/(k+1)})'\int_{t}^{+\infty}(F(s))^{-1/(k+1)}ds\\
&=t^{\frac{\gamma-k}{k+1}}(\hat{L}_{0}(t))^{\frac{1}{k+1}}\bigg(\frac{\gamma+1}{k+1}+\frac{1}{k+1}\frac{t\hat{L}'_{0}(t)}{\hat{L}_{0}(t)}\bigg)\int_{t}^{a_{1}}t^{-(\gamma+1)/(k+1)}(\hat{L}_{0}(s))^{-1/(k+1)}ds\\
&+t^{\frac{\gamma-k}{k+1}}(\hat{L}_{0}(t))^{\frac{1}{k+1}}\bigg(\frac{\gamma+1}{k+1}+\frac{1}{k+1}\frac{t\hat{L}'_{0}(t)}{\hat{L}_{0}(t)}\bigg)\int_{a_{1}}^{+\infty}(F(s))^{-1/(k+1)}ds.
\end{split}
\end{equation}
By $\mathbf{(f_{5})}$ and Proposition \ref{p3} $\mathbf{(i)}$, we
have
\begin{equation}\label{2f12}
\lim_{t\rightarrow0^{+}}t^{\frac{\gamma-k}{k+1}}(\hat{L}_{0}(t))^{\frac{1}{k+1}}\bigg(\frac{\gamma+1}{k+1}+\frac{1}{k+1}\frac{t\hat{L}'_{0}(t)}{\hat{L}_{0}(t)}\bigg)\int_{a_{1}}^{+\infty}(F(s))^{-1/(k+1)}ds=0.
\end{equation}
\eqref{2f11}-\eqref{2f12} combined with Proposition \ref{HUR}
$\mathbf{(ii)}$ imply that
\[
\begin{split}
&\lim_{t\rightarrow0^{+}}((F(t))^{1/(k+1)})'\int_{t}^{+\infty}(F(s))^{-1/(k+1)}ds\\
=&\lim_{t\rightarrow0^{+}}\bigg(\frac{\gamma+1}{\gamma-k}+\frac{1}{\gamma-k}\frac{t\hat{L}_{0}'(t)}{\hat{L}_{0}(t)}\bigg)=\frac{\gamma+1}{\gamma-k}=E_{f}^{0}.
\end{split}
\]
$\mathbf{(iii)}$  By the similar argument as Lemma \ref{lemma22}
$\mathbf{(iii)}$, we obtain $\mathbf{(iii)}$ holds.

\end{proof}
\begin{lemma}\label{lemma27}
Let $f$ satisfy $\mathbf{(S_{01})}$ and $\mathbf{(f_{5})}$, $F$ be
defined by \eqref{f21}, then
\begin{description}
\item[$\mathbf{(i)}$]if $\mathbf{(f_{8})}$ holds, then
$E_{f}^{-\infty}\leq1$ and $
\lim_{t\rightarrow-\infty}\frac{(F(t))^{1/(k+1)}}{t}\int_{t}^{+\infty}(F(s))^{-1/(k+1)}ds=E_{f}^{-\infty}-1;
$
\item[$\mathbf{(ii)}$]if $\mathbf{(f_{8})}$ holds with
$E_{f}^{-\infty}<1$ if and only if $F$ is regularly varying at
negative infinity with index
$\frac{E_{f}^{-\infty}(k+1)}{E_{f}^{-\infty}-1}$;
\item[$\mathbf{(iii)}$] if $\mathbf{(f_{8})}$ holds with
$E_{f}^{-\infty}=1$, then for any $\gamma>0$, it holds
$\lim_{t\rightarrow-\infty}\frac{F(t)}{(-t)^{-\gamma}}=0$.
\end{description}
\end{lemma}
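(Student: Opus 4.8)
The plan is to follow the pattern of Lemma \ref{lemma23} (the negative-infinity result built on $I$) and Lemma \ref{lemma26} (the zero result built on $J$), replacing $I$ by $J$ from \eqref{JJJ} and $f$ by $F$ from \eqref{f21}, and pushing every limit to $t\to-\infty$; the recurring subtlety is that $t<0$ throughout (so several inequalities flip) and, by $\mathbf{(S_{01})}$, the tail $\int_{-\infty}^{a}(F(s))^{-1/(k+1)}\,ds=+\infty$, so it is the divergent part that carries the asymptotics.

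For $\mathbf{(i)}$ I fix $a\in\mathbb{R}$, integrate $J$ over $[t,a]$ with $t<a$, and integrate by parts to obtain
\[
\int_t^a J(s)\,ds=(F(a))^{1/(k+1)}\int_a^{+\infty}(F(s))^{-1/(k+1)}\,ds-(F(t))^{1/(k+1)}\int_t^{+\infty}(F(s))^{-1/(k+1)}\,ds+(a-t).
\]
Solving for the product, dividing by $t$, and applying l'Hospital's rule as $t\to-\infty$ (the numerator has derivative $J(t)-1$, the denominator derivative $1$) gives $\lim_{t\to-\infty}\frac{(F(t))^{1/(k+1)}}{t}\int_t^{+\infty}(F(s))^{-1/(k+1)}\,ds=E_f^{-\infty}-1$; since this quantity is $\le 0$ ($F>0$, the integral is positive, $t<0$), we conclude $E_f^{-\infty}\le 1$.

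For $\mathbf{(ii)}$, necessity is the calculation of \eqref{2f4}: using $F'=f$ and $((F)^{1/(k+1)})'=\frac{1}{k+1}F^{\frac{1}{k+1}-1}f$ one gets the identity $\frac{tf(t)}{F(t)}=(k+1)J(t)\big/\big(\frac{(F(t))^{1/(k+1)}}{t}\int_t^{+\infty}(F(s))^{-1/(k+1)}\,ds\big)$, so by $\mathbf{(i)}$, $\lim_{t\to-\infty}\frac{tF'(t)}{F(t)}=\frac{(k+1)E_f^{-\infty}}{E_f^{-\infty}-1}$ (finite since $E_f^{-\infty}<1$), and Proposition \ref{proposition3.5} (its negative-infinity form) yields that $F$ is normalized regularly varying at negative infinity with index $\frac{(k+1)E_f^{-\infty}}{E_f^{-\infty}-1}$. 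For sufficiency, the Representation Theorem at negative infinity (Proposition \ref{proposition4.7}) gives $a_0<0$ and $L_{-\infty}$ slowly varying at $-\infty$ with $F(t)=(-t)^{\gamma}L_{-\infty}(t)$ on $(-\infty,a_0]$, where $\gamma=\frac{(k+1)E_f^{-\infty}}{E_f^{-\infty}-1}<0$. As in \eqref{2f7}--\eqref{2f11}, I split $\int_t^{+\infty}=\int_t^{a_0}+\int_{a_0}^{+\infty}$: the finite piece dies after multiplication by $((F)^{1/(k+1)})'$, while for $\int_t^{a_0}(-s)^{-\gamma/(k+1)}(L_{-\infty}(s))^{-1/(k+1)}\,ds$ the substitution $u=-s$ and Proposition \ref{HUR} $\mathbf{(iii)}$ (applicable since $-\gamma/(k+1)>-1$) give $\frac{k+1}{k+1-\gamma}(-t)^{\frac{k+1-\gamma}{k+1}}(L_{-\infty}(t))^{-1/(k+1)}$; multiplying by $((F)^{1/(k+1)})'=(-t)^{\frac{\gamma}{k+1}-1}(L_{-\infty}(t))^{1/(k+1)}\big(-\frac{\gamma}{k+1}+o(1)\big)$ cancels the powers of $(-t)$ and the slowly varying factor, so $\lim_{t\to-\infty}J(t)=\frac{-\gamma}{k+1-\gamma}=\frac{\gamma}{\gamma-(k+1)}=E_f^{-\infty}\in(0,1)$, i.e. $\mathbf{(f_{8})}$ holds.

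For $\mathbf{(iii)}$, if $E_f^{-\infty}=1$ then $\mathbf{(i)}$ forces $\frac{(F(t))^{1/(k+1)}}{t}\int_t^{+\infty}(F(s))^{-1/(k+1)}\,ds\to 0^-$ while $J(t)\to 1$, so the identity from the necessity step gives $\lim_{t\to-\infty}\frac{tF'(t)}{F(t)}=-\infty$; hence for each $\gamma>0$ there is $t_0<0$ with $F'(t)/F(t)>-(\gamma+1)t^{-1}$ for $t\le t_0$ (the sign flips because $t<0$), and integrating over $[t,t_0]$ yields $F(t)<F(t_0)(-t_0)^{\gamma+1}(-t)^{-(\gamma+1)}$, so $(-t)^{\gamma}F(t)\le F(t_0)(-t_0)^{\gamma+1}(-t)^{-1}\to 0$, that is $\lim_{t\to-\infty}F(t)/(-t)^{-\gamma}=0$. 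I expect the sufficiency half of $\mathbf{(ii)}$ to be the delicate step: Proposition \ref{HUR} is stated only at $0$ and $+\infty$, so it must be transplanted to an integral accumulating at $-\infty$, and, unlike in Lemmas \ref{lemma22} and \ref{lemma26}, the relevant tail $\int_t^{a_0}(F(s))^{-1/(k+1)}\,ds$ is genuinely infinite; one must then verify that the exponents $\frac{\gamma}{k+1}-1$ and $\frac{k+1-\gamma}{k+1}$ of $(-t)$ sum to $0$ so the Karamata factor drops out, after which the remaining content is sign bookkeeping.
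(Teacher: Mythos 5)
Your proof of $\mathbf{(i)}$ is exactly the paper's: the same integration of $J$ by parts on $[t,a]$, the same l'Hospital step giving $E_f^{-\infty}-1$, and the same sign observation forcing $E_f^{-\infty}\le1$. For $\mathbf{(ii)}$--$\mathbf{(iii)}$ the paper only writes ``by similar arguments as Lemma \ref{lemma26}'', so you are supplying the work the paper leaves implicit; your sign bookkeeping and the verification that the exponents of $(-t)$ sum to zero in the Karamata step are precisely the points that need care, and you handle them correctly. The argument for $\mathbf{(iii)}$ (using the identity from $\mathbf{(ii)}$, $tF'(t)/F(t)\to-\infty$, and a differential inequality) matches the template of Lemma \ref{lemma23}$\mathbf{(iii)}$.

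The one place your argument is not fully justified is the sufficiency half of $\mathbf{(ii)}$. After invoking Proposition \ref{proposition4.7} for the representation $F(t)=(-t)^{\gamma}L_{-\infty}(t)$, you write $((F)^{1/(k+1)})'=(-t)^{\gamma/(k+1)-1}(L_{-\infty}(t))^{1/(k+1)}\bigl(-\tfrac{\gamma}{k+1}+o(1)\bigr)$, which is equivalent to $\tfrac{tF'(t)}{F(t)}\to\gamma$, i.e.\ to $F$ being \emph{normalized} regularly varying. But Proposition \ref{proposition4.7} alone yields the un-normalized representation (with a nonconstant factor $\nu$), so regular variation of $F$ by itself does not give this derivative asymptotic even though $F\in C^1$. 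Note the paper's Lemma \ref{lemma26}$\mathbf{(ii)}$ avoids this by placing the hypothesis on $f$ and then deducing $F\in NRVZ$ via dominated convergence, whereas the statement of Lemma \ref{lemma27}$\mathbf{(ii)}$ is phrased in terms of $F$. The cleanest repair is the monotone density theorem: since $F'=f$ is increasing by $\mathbf{(S_{01})}$ and $F(t)=\int_{-\infty}^{t}f$, regular variation of $F$ at $-\infty$ with index $\gamma<0$ forces $tF'(t)/F(t)\to\gamma$; after that your computation goes through. You flag sufficiency as the delicate step, but locate the delicacy in transplanting Proposition \ref{HUR} to $-\infty$ rather than in the normalized/un-normalized distinction, which is where the actual gap sits.
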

\begin{proof}
Take $a\in\mathbb{R}$. Integrate $J$ from $t$ to $a>t$ and integrate by parts, we obtain
\[
\begin{split}
\int_{t}^{a}J(s)ds&=\int_{t}^{a}((F(s))^{1/(k+1)})'\int_{a}^{+\infty}(F(s))^{-1/(k+1)}ds\\
&=(F(a))^{1/(k+1)}\int_{a}^{+\infty}(F(s))^{-1/(k+1)}ds-(F(t))^{1/(k+1)}\int_{t}^{+\infty}(F(s))^{-1/(k+1)}ds+a-t.
\end{split}
\]
It follows by the l'Hospital's rule that
\[
0\geq
\lim_{t\rightarrow-\infty}\frac{(F(t))^{1/(k+1)}}{t}\int_{t}^{+\infty}(F(s))^{-1/(k+1)}ds=-1+\lim_{t\rightarrow-\infty}J(t)=E_{f}^{-\infty}-1.
\]
$\mathbf{(ii)}$-$\mathbf{(iii)}$ By similar arguments as
$\mathbf{(ii)}$ and $\mathbf{(iii)}$ of Lemma \ref{lemma26}, we
obtain $\mathbf{(ii)}$-$\mathbf{(iii)}$ hold.
\end{proof}
\begin{lemma}\label{lemma29}
Let $f$ satisfy $\mathbf{(S_{1})}$ $(\mbox{or } \mathbf{S_{01}})$
and $\mathbf{(f_{5})}$, $\varphi$  be uniquely determined by
\eqref{f23}, then
\begin{description}
\item[$\mathbf{(i)}$] $\varphi'(t)=-((k+1)F(\varphi(t)))^{1/(k+1)}$,
$\psi''(t)=((k+1)F(\varphi(t)))^{\frac{1-k}{k+1}}f(\varphi(t))$ and
\[
(-\varphi'(t))^{k-1}\varphi''(t)=f(\varphi(t)),\,t>0;
\]
\item[$\mathbf{(ii)}$]if $\mathbf{(f_{6})}$ holds, then $\lim_{t\rightarrow0^{+}}\frac{\varphi'(t)}{t\varphi''(t)}=0;$
\item[$\mathbf{(iii)}$] if $\mathbf{(f_{7})}$ holds, then $\lim_{t\rightarrow+\infty}\frac{\varphi'(t)}{t\varphi''(t)}=-(E_{f}^{0})^{-1};$
\item[$\mathbf{(iv)}$]  if $\mathbf{(f_{8})}$ holds, then $\lim_{t\rightarrow+\infty}\frac{\varphi'(t)}{t\varphi''(t)}=-(E_{f}^{-\infty})^{-1}.$
\end{description}
\end{lemma}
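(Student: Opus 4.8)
The plan is to obtain $\mathbf{(i)}$ by differentiating the defining relation \eqref{f23} twice, and then to deduce $\mathbf{(ii)}$--$\mathbf{(iv)}$ by recognizing $\varphi'/(t\varphi'')$ as $-1/J(\varphi(t))$ (with $J$ as in \eqref{JJJ}) and inserting the asymptotics of $J$ contained in $\mathbf{(f_6)}$--$\mathbf{(f_8)}$ together with the limiting behavior of $\varphi$ recorded just before the statement of Theorem \ref{thm2.6}.

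For $\mathbf{(i)}$, note first that $F\in C^{\infty}$ and $((k+1)F(s))^{-1/(k+1)}$ is smooth on the range of $\varphi$, so $\varphi$ is smooth where defined. Differentiating $\int_{\varphi(t)}^{+\infty}((k+1)F(s))^{-1/(k+1)}\,ds=t$ and using $F'=f$ gives $-((k+1)F(\varphi(t)))^{-1/(k+1)}\varphi'(t)=1$, i.e. $\varphi'(t)=-((k+1)F(\varphi(t)))^{1/(k+1)}$. Differentiating once more (chain rule, and $\varphi'<0$) produces $\varphi''(t)=((k+1)F(\varphi(t)))^{(1-k)/(k+1)}f(\varphi(t))$; since $(-\varphi'(t))^{k-1}=((k+1)F(\varphi(t)))^{(k-1)/(k+1)}$, the powers of $(k+1)F(\varphi(t))$ cancel in the product and $(-\varphi'(t))^{k-1}\varphi''(t)=f(\varphi(t))$ follows.

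For $\mathbf{(ii)}$--$\mathbf{(iv)}$, from $\mathbf{(i)}$ one computes
\[
\frac{\varphi'(t)}{t\varphi''(t)}=-\frac{((k+1)F(\varphi(t)))^{k/(k+1)}}{t\,f(\varphi(t))}.
\]
Substituting $t=\int_{\varphi(t)}^{+\infty}((k+1)F(s))^{-1/(k+1)}\,ds$ and setting $\tau=\varphi(t)$, I will verify the elementary identity
\[
\frac{((k+1)F(\tau))^{k/(k+1)}}{f(\tau)\int_{\tau}^{+\infty}((k+1)F(s))^{-1/(k+1)}\,ds}=\frac{1}{J(\tau)},
\]
which follows once one notes $((F(\tau))^{1/(k+1)})'=\frac{1}{k+1}f(\tau)(F(\tau))^{-k/(k+1)}$, so that $J(\tau)=\frac{1}{k+1}\frac{f(\tau)}{(F(\tau))^{k/(k+1)}}\int_{\tau}^{+\infty}(F(s))^{-1/(k+1)}\,ds$ and the factors $(k+1)^{\pm 1/(k+1)}$, $(k+1)^{k/(k+1)}$ balance out. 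Hence $\varphi'(t)/(t\varphi''(t))=-1/J(\varphi(t))$. Now, under $\mathbf{(S_1)}$ (or $\mathbf{(S_{01})}$) and $\mathbf{(f_5)}$ we have $\varphi(t)\to+\infty$ as $t\to0^+$, so $\mathbf{(f_6)}$ (i.e. $J\to+\infty$ at $+\infty$) yields $\mathbf{(ii)}$; under $\mathbf{(f_7)}$ we have $\varphi(t)\to0^+$ as $t\to+\infty$, so $\lim_{t\to+\infty}1/J(\varphi(t))=1/E_f^{0}$, giving $\mathbf{(iii)}$; under $\mathbf{(f_8)}$ we have $\varphi(t)\to-\infty$ as $t\to+\infty$, so $\lim_{t\to+\infty}1/J(\varphi(t))=1/E_f^{-\infty}$, giving $\mathbf{(iv)}$.

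I expect the only point needing genuine care is the exponent bookkeeping in the displayed identity relating $\varphi$ to $J$ (the matching of the $(k+1)$-factors and of the powers $k/(k+1)$, $1/(k+1)$, $(1-k)/(k+1)$); everything else is a direct substitution of facts already available in the excerpt, in particular the range behavior of $\varphi$ stated before Theorem \ref{thm2.6} and the asymptotics of $J$ built into $\mathbf{(f_6)}$--$\mathbf{(f_8)}$.
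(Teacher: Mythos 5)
Your proof is correct and proceeds essentially as the paper does: differentiate \eqref{f23} twice to get $\mathbf{(i)}$, recognize $\varphi'(t)/(t\varphi''(t))=-1/J(\varphi(t))$ via the substitution $t=\int_{\varphi(t)}^{+\infty}((k+1)F(s))^{-1/(k+1)}ds$, and then invoke $\mathbf{(f_6)}$--$\mathbf{(f_8)}$ together with the range behavior of $\varphi$. Your explicit rewriting in terms of $J$ is only a cosmetic tidying of the paper's displayed computation.
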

\begin{proof}
\noindent By a direct calculation, we obtain $\mathbf{(i)}$ holds.

\noindent $\mathbf{(ii)}$ By $\mathbf{(i)}$ and $\mathbf{(f_{6})}$, we have
\[
\begin{split}
\lim_{t\rightarrow0^{+}}
\frac{\varphi'(t)}{t\varphi''(t)}=-&\lim_{t\rightarrow0^{+}}\bigg(\frac{1}{k+1}(F(\varphi(t)))^{-k/(k+1)}f(\varphi(t))\int_{t}^{+\infty}(F(s))^{-1/(k+1)}ds\bigg)^{-1}\\
=-&\lim_{t\rightarrow+\infty}\bigg(((F(t))^{1/(k+1)})^{'}\int_{t}^{+\infty}(F(s))^{-1/(k+1)}ds\bigg)^{-1}=0.
\end{split}
\]
\noindent $\mathbf{(iii)}$ By $\mathbf{(i)}$ and $\mathbf{(f_{7})}$, we have
\[
\lim_{t\rightarrow+\infty}\frac{\varphi'(t)}{t\varphi''(t)}=-\lim_{t\rightarrow0^{+}}\bigg(((F(t))^{1/(k+1)})'\int_{t}^{+\infty}(F(s))^{-1/(k+1)}ds\bigg)^{-1}=-(E_{f}^{0})^{-1}.
\]
$\mathbf{(iv)}$  By $\mathbf{(i)}$  and $\mathbf{(f_{8})}$, we have
\[
\lim_{t\rightarrow+\infty}\frac{\varphi'(t)}{t\varphi''(t)}=-\lim_{t\rightarrow-\infty}\bigg(((F(t))^{1/(k+1)})'\int_{t}^{+\infty}(F(s))^{-1/(k+1)}ds\bigg)^{-1}=-(E_{f}^{-\infty})^{-1}.
\]
\end{proof}
\begin{lemma}\label{lemma28}$(\mbox{Lemma } 2.1 \mbox{ in }\cite{DSZhang1})$
Let $\theta\in \Lambda$, we have
\begin{description}
\item[$\mathbf{(i)}$]$\lim_{t\rightarrow0^{+}}\frac{\Theta(t)}{\theta(t)}=0 \mbox{ and }
\lim_{t\rightarrow0^{+}}\frac{\Theta(t)\theta'(t)}{\theta^{2}(t)}=1-D_{\theta}$;
\item[$\mathbf{(ii)}$]if $D_{\theta}>0$, then $\theta\in
NRVZ_{\frac{1-D_{\theta}}{D_{\theta}}}$, in particular, if
$D_{\theta}=1$, then $\theta$ is slowly varying at zero;
\item[$\mathbf{(iii)}$]if $\theta=0$, then for any $\gamma>0$, it
holds $\lim_{t\rightarrow0^{+}}\frac{\theta(t)}{t^{\gamma}}=0$.
\end{description}
\end{lemma}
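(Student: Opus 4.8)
The plan is to reduce all four limits to a single auxiliary function. With $\Theta(t)=\int_{0}^{t}\theta(s)\,ds$ (so $\Theta\in C^{1}(0,\delta_{0})$, $\Theta'=\theta>0$, and $\Theta(0^{+})=0$ by the integrability of $\theta$), set
\[
g(t):=\frac{\Theta(t)}{\theta(t)},\qquad t\in(0,\delta_{0}).
\]
Then $g>0$, $g\in C^{1}(0,\delta_{0})$, and a direct differentiation gives $g'(t)=1-\Theta(t)\theta'(t)/\theta^{2}(t)$, so the defining property of $\Lambda$ says precisely $g'(t)\to D_{\theta}$ as $t\to0^{+}$. Consequently $\Theta(t)\theta'(t)/\theta^{2}(t)=1-g'(t)\to1-D_{\theta}$, which is the second assertion of (i). It remains to prove $g(0^{+})=0$ and then to read off (ii) and (iii).

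For $g(0^{+})=0$: since $g'(t)\to D_{\theta}\in[0,\infty)$, $g'$ is bounded on some $(0,\delta_{1})$, hence $g$ is Lipschitz there, extends continuously to $[0,\delta_{1})$ with a finite value $\ell:=g(0^{+})\ge0$, and $g(t)=\ell+\int_{0}^{t}g'(s)\,ds$ for $t<\delta_{1}$. If $\ell>0$, then $\theta/\Theta=1/g$ is bounded on $(0,\delta_{1})$, so $\int_{0}^{\delta_{1}}\theta(s)/\Theta(s)\,ds<\infty$; but $\int_{t}^{\delta_{1}}\theta(s)/\Theta(s)\,ds=\ln\Theta(\delta_{1})-\ln\Theta(t)\to+\infty$ as $t\to0^{+}$ because $\Theta(0^{+})=0$, a contradiction. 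Hence $\ell=0$, which is the first assertion of (i); moreover now $g(t)=\int_{0}^{t}g'(s)\,ds$, and since $g'(s)\to D_{\theta}$ a Ces\`aro-type averaging gives $g(t)/t\to D_{\theta}$.

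For (ii) and (iii) I would use the identity
\[
\frac{t\theta'(t)}{\theta(t)}=\frac{t}{g(t)}\cdot\frac{\Theta(t)\theta'(t)}{\theta^{2}(t)}=\frac{t}{g(t)}\,\bigl(1-g'(t)\bigr).
\]
If $D_{\theta}>0$, then $g(t)/t\to D_{\theta}$ gives $t/g(t)\to1/D_{\theta}$, while $1-g'(t)\to1-D_{\theta}$, so $t\theta'(t)/\theta(t)\to(1-D_{\theta})/D_{\theta}$; by Proposition \ref{proposition3.5} this is exactly $\theta\in NRVZ_{(1-D_{\theta})/D_{\theta}}$, which for $D_{\theta}=1$ reduces to $\theta$ being normalized slowly varying at zero — this is (ii). If instead $D_{\theta}=0$, then $g'(t)\to0$ and $g(t)/t\to0$, hence $t/g(t)\to+\infty$ and $t\theta'(t)/\theta(t)\to+\infty$; then, exactly as in the last step of the proof of Lemma \ref{lemma22}$\mathbf{(iii)}$, for every $M>0$ there is $\delta>0$ with $\theta'(t)/\theta(t)>M/t$ on $(0,\delta)$, integration from $t$ to $\delta$ yields $\theta(t)\le Ct^{M}$ for $t$ small, and choosing $M>\gamma$ gives $\theta(t)/t^{\gamma}\to0$ for every $\gamma>0$ — this is (iii).

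The only genuinely delicate point is the first assertion of (i): the bare convergence $g'(t)\to D_{\theta}$ does not force $g(0^{+})=0$, and the extra input that is used here is the integrability $\theta\in L_{1}(0,\delta_{0})$ together with $\Theta'/\Theta=1/g$, through the logarithmic divergence of $\int_{0}^{\delta_{1}}\theta/\Theta$. Everything afterwards is short bookkeeping with $g$ plus the same integration device already exploited in Lemmas \ref{lemma21}--\ref{lemma22}. I note that the monotonicity of $\theta$ in the definition of $\Lambda$ is not strictly needed for these four limits, but it keeps $\operatorname{sign}\theta'$ constant near $0$ and is convenient to invoke wherever one would otherwise have to exclude oscillation of $\theta'$.
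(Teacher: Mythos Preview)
The paper does not prove this lemma; it is stated as a citation of Lemma~2.1 in \cite{DSZhang1} and used without argument. Your proof is correct and self-contained: the reduction to $g=\Theta/\theta$ with $g'(0^{+})=D_{\theta}$, the logarithmic-divergence argument (using $\int_{t}^{\delta_{1}}\Theta'/\Theta=\ln\Theta(\delta_{1})-\ln\Theta(t)\to+\infty$ together with $\theta\in L^{1}$) to force $g(0^{+})=0$, and then the identity $t\theta'/\theta=(t/g)(1-g')$ combined with $g(t)/t\to D_{\theta}$ give all three parts cleanly via Proposition~\ref{proposition3.5} and the integration step of Lemma~\ref{lemma22}\,(iii). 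This is the standard route to these facts in the C\^{i}rstea--R\u{a}dulescu framework (cf.\ \cite{HCirstea2,DSZhang1}), so you have simply filled in what the present paper only cites. One cosmetic remark: in case (iii) the hypothesis ``$\theta=0$'' in the statement is a typo for ``$D_{\theta}=0$'', which you correctly read; note also that $\Theta\theta'/\theta^{2}\to1>0$ then forces $\theta'>0$ near $0$, so the monotonicity of $\theta$ makes it non-decreasing there and your integration of $\theta'/\theta>M/t$ goes in the right direction.
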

\section{Optimal global behavior of large solutions}
In this section, we prove Theorems \ref{thm2.1}-\ref{thm2.6}. Basic
to our subsequent discussions is the following two lemmas.

We first introduce a sub-supersolution method of $k$-convex solution
to  problem \eqref{M}.

\begin{definition}
A function $\underline{u}\in C^{2}(\Omega)$ is called subsolution if
$\underline{u}$ is $k$-convex in $\Omega$ and satisfies
\[
S_{k}(D^{2}\underline{u}(x))\geq
b(x)f(\underline{u}(x)),\,x\in\Omega,\,\underline{u}|_{\partial\Omega}=+\infty.
\]
\end{definition}
\begin{definition}
A function $\underline{u}\in C^{2}(\Omega)$ is called supersolution
if $\overline{u}$ is $k$-convex in $\Omega$ and satisfies
\[
S_{k}(D^{2}\overline{u}(x))\leq
b(x)f(\overline{u}(x)),\,x\in\Omega,\,\overline{u}|_{\partial\Omega}=+\infty.
\]
\end{definition}
\begin{lemma}\label{Lemma4.1}
Let $b$ satisfy $\mathbf{(b_{1})}$, $f$ satisfy $\mathbf{(S_{1})}$
$(\mbox{or } \mathbf{(S_{01})})$. Assume that
$\overline{u}=h_{1}(v)$ and $\underline{u}=h_{2}(v)$ are positive
$($or may be sign-changing$)$ classical supersolution and
subsolution, respectively to problem \eqref{M} and satisfy
$\underline{u}\leq \overline{u}$ in $\Omega$, then problem \eqref{M}
has at least one $k$-convex solution $u\in C^{\infty}(\Omega)$  in
the order interval $[\underline{u}, \overline{u}]$.
\end{lemma}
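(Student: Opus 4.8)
The plan is to realize the solution as a limit of solutions to Dirichlet problems on an increasing exhaustion of $\Omega$, using Lemma \ref{lemma2.6} to solve each approximating problem and the ordered pair $\underline{u}\le\overline{u}$ to squeeze the approximants. First I would fix an exhaustion $\Omega_{1}\Subset\Omega_{2}\Subset\cdots$ with $\bigcup_{n}\Omega_{n}=\Omega$, each $\Omega_{n}$ smooth and $(k-1)$-convex (this is possible because $\Omega$ is smooth and strictly convex, and one may take $\Omega_n=\{x\in\Omega:v(x)>1/n\}$ for $n$ large, whose boundaries are level sets of the positive-definite function $v$). On each $\Omega_{n}$ consider the boundary value problem
\begin{equation*}
S_{k}(D^{2}u)=b(x)f(u)\mbox{ in }\Omega_{n},\quad u|_{\partial\Omega_{n}}=\overline{u}|_{\partial\Omega_{n}}.
\end{equation*}
Since $\underline{u}$ is $k$-convex and $S_{k}(D^{2}\underline{u})\ge b(x)f(\underline{u})$ in $\Omega\supset\overline{\Omega_{n}}$, and since $\underline{u}\le\overline{u}$ on $\partial\Omega_{n}$, the function $\underline{u}$ is a $k$-convex subsolution on $\Omega_n$; a strict subsolution is obtained by the standard device of replacing $\underline{u}$ by $\underline{u}-\epsilon_n(|x|^{2}-R^{2})$-type modification, or more simply by observing that $\underline u$ plus a small multiple of a strictly $k$-convex function (for instance $-v$, suitably scaled) is a strict subsolution with the same or smaller boundary data, then adjusting. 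Lemma \ref{lemma2.6} applied with $h(x,t)=b(x)f(t)$ (note $h>0$ and $h_{t}\ge0$ by $\mathbf{(b_1)}$ and $\mathbf{(S_1)}$ or $\mathbf{(S_{01})}$) then yields a unique $k$-convex solution $u_{n}\in C^{\infty}(\overline{\Omega_{n}})$.

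Next I would establish the two-sided bound $\underline{u}\le u_{n}\le\overline{u}$ on $\Omega_{n}$ and the monotonicity of $\{u_n\}$. For the upper bound, apply the comparison principle Lemma \ref{lemma2.1} with $u=u_{n}$, $v=\overline{u}$, $\phi_{1}=\phi_{2}=b(x)f(t)$ (so hypothesis $\mathbf{(i)}$ is equality, $\mathbf{(ii)}$ holds since $u_n$ solves and $\overline u$ is a supersolution, $\mathbf{(iii)}$ holds since $u_n=\overline u$ on $\partial\Omega_n$, and $\mathbf{(iv)}$ holds because $\partial_t(b(x)f(t))=b(x)f'(t)>0$); this gives $u_{n}\le\overline{u}$ in $\Omega_{n}$. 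The same lemma with the roles reversed, using that $\underline u$ is a subsolution and $\underline u\le\overline u=u_n$ on $\partial\Omega_n$, gives $\underline{u}\le u_{n}$ in $\Omega_{n}$. Monotonicity $u_{n+1}\le u_{n}$ on $\Omega_{n}$ follows from Lemma \ref{lemma2.1} once more: on $\partial\Omega_{n}$ one has $u_{n+1}\le\overline{u}=u_{n}$ (by the bound just proved applied on $\Omega_{n+1}$), and both solve the same equation, so $u_{n+1}\le u_{n}$ in $\Omega_{n}$. Hence $u(x):=\lim_{n\to\infty}u_{n}(x)$ exists for every $x\in\Omega$ and satisfies $\underline{u}(x)\le u(x)\le\overline{u}(x)$.

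Finally I would upgrade the pointwise limit to a classical solution and verify the blow-up boundary condition. On any $D\Subset\Omega$ choose $n$ large so that $D\Subset\Omega_{n}$; on $D$ the sequence $\{u_{m}\}_{m\ge n}$ is monotone and trapped between $\underline{u}$ and $\overline{u}$, hence uniformly bounded on $\overline D$ by $\beta_1:=\min_{\overline D}\underline u$ and $\beta_2:=\max_{\overline D}\overline u$, with $\beta_1>\inf$-range so that $f$ is smooth there. Lemma \ref{lemma2.5} (interior estimates, applied on a slightly larger subdomain $D'$ with $D\Subset D'\Subset\Omega_n$, writing $u_m$ as the solution of $S_k(D^2u)=b(x)f(u)$ with the constant boundary value on $\partial\Omega_n$ replaced by an auxiliary constant-boundary problem on $D'$, or more directly by the interior character of the estimate) gives $\|u_{m}\|_{C^{\tau}(\overline D)}\le C$ uniformly in $m$, for each $\tau$; by Arzelà–Ascoli and a diagonal argument $u\in C^{\infty}(\Omega)$ and $S_{k}(D^{2}u)=b(x)f(u)$ in $\Omega$, and $u$ inherits $k$-convexity from the $u_m$ (the closed conditions $S_i(D^2u_m)\ge0$ pass to the limit; since each $u_m$ is $k$-convex on $\Omega_n$ this is immediate). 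For the boundary condition, $u\ge\underline{u}$ and $\underline{u}|_{\partial\Omega}=+\infty$ force $u(x)\to+\infty$ as $d(x)\to0$. The main obstacle I anticipate is the bookkeeping at the approximation step: ensuring that on each $\Omega_{n}$ one genuinely has a $k$-convex \emph{strict} subsolution (as Lemma \ref{lemma2.6} requires the strict inequality $S_k(D^2v)\ge h(x,v)+\delta$), which forces a careful perturbation of $\underline u$ that preserves $k$-convexity, preserves the ordering $\le\overline u$ on $\partial\Omega_n$, and still has the right sign of the defect — together with checking that the interior estimate of Lemma \ref{lemma2.5} can be invoked with constants independent of $n$ so that the diagonal limit is legitimate.
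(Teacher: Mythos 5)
Your high-level strategy matches the paper's: exhaust $\Omega$, solve a Dirichlet problem on each piece via Lemma~\ref{lemma2.6}, order and bound the approximants via Lemma~\ref{lemma2.1}, then pass to the limit using the interior estimates of Lemma~\ref{lemma2.5}. But the details of the exhaustion and boundary data are where the two arguments diverge, and that is exactly where your proposal has a gap. Lemma~\ref{lemma2.6}, as stated, demands a $k$-convex \emph{strict} subsolution $w$ with $w|_{\partial\Omega_n}=\tilde{\phi}$, i.e.\ matching the prescribed boundary datum exactly. You prescribe $\tilde{\phi}=\overline{u}|_{\partial\Omega_n}$ but your candidate subsolution is a downward perturbation of $\underline{u}$, which on $\partial\Omega_n$ lies strictly \emph{below} $\overline{u}$ in general; you flag this as ``then adjusting'' but do not say how, and the natural repairs (e.g.\ taking a pointwise maximum with a function that hits $\overline{u}$ near $\partial\Omega_n$) destroy $C^2$-regularity. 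The paper sidesteps this entirely by making the exhaustion \emph{follow} the subsolution: it sets $w_{\varepsilon}=\underline{u}-\varepsilon$ and defines $\Omega_n^{\varepsilon}=\{w_{\varepsilon}<\sigma_n\}$, so that $w_{\varepsilon}\equiv\sigma_n$ on $\partial\Omega_n^{\varepsilon}$ and Lemma~\ref{lemma2.6} applies with $\tilde{\phi}=\sigma_n$ verbatim; the level sets of $w_{\varepsilon}$ are level sets of $v$, hence smooth and strictly convex. (The resulting $u_n^{\varepsilon}$ then form an \emph{increasing} sequence, whereas your $u_n$ decrease; either direction can work once the boundary matching is fixed, but the paper's choice makes it automatic.)

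A second, smaller point: to make the subsolution strict, the paper simply subtracts a constant, which leaves $D^2$ and hence $S_k$ unchanged while strictly lowering $b(x)f(\cdot)$ by strict monotonicity of $f$; on a compact closure this gives a uniform $\delta>0$ gap and requires no computation of $S_k$. Your suggestion of adding a small multiple of the strictly $k$-convex function $-v$ does not yield an obvious one-sided estimate, because $S_k$ is not additive; you would still have to argue the strict inequality, which is considerably more delicate than the constant-shift device. Finally, note that the paper runs a double limit (first $n\to\infty$ at fixed $\varepsilon_i$, then $i\to\infty$) precisely to recover the sharp lower bound $\underline{u}\le u$ after having worked with the strictly smaller $w_{\varepsilon_i}$; if you make $\varepsilon$ depend on $n$ you must still check that the comparison and interior estimates hold uniformly as $\varepsilon_n\to 0$, which the two-parameter scheme handles cleanly.
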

Inspired by the ideas of Lazer and Mckenna in Theorem 2.1 of
\cite{HLazer-McKenna} and Zhang and Feng  in Theorem 1.3 of
\cite{ZhangFeng1}, we prove Lemma \ref{Lemma4.1}.
\begin{proof}
Since $\underline{u}=h_{2}(v)$ is a subsolution to problem
\eqref{M}, we have
\[
S_{k}(D^{2}\underline{u}(x))\geq
b(x)f(\underline{u}(x)),\,x\in\Omega.
\]
Take $\varepsilon>0$ and let
$w_{\varepsilon}(x):=\underline{u}(x)-\varepsilon$, it is clear that
$w_{\varepsilon}(x)$ is also a subsolution to problem \eqref{M}. In
fact, if $\mathbf{(S_{1})}$ holds, we can take $\varepsilon>0$ small
enough such that $w_{\varepsilon}$ is positive in $\Omega$. So, we
have
\begin{equation}\label{4f4}
S_{k}(D^{2}w_{\varepsilon}(x))>b(x)f(w_{\varepsilon}(x)),\,x\in\Omega.
\end{equation}

Let $\{\sigma_{n}\}_{n=1}^{+\infty}$ be a strictly increasing
sequence of positive numbers such that
$\sigma_{n}\rightarrow+\infty$ as $n\rightarrow+\infty$, and let
\[
\Omega_{n}^{\varepsilon}:=\{x\in\Omega:
w_{\varepsilon}(x)<\sigma_{n}\} \mbox{ and
}\Omega_{n}:=\{x\in\Omega:\underline{u}(x)<\sigma_{n}\}.
\]
Since any level surface of $\underline{u}$ is a level surface of
$v$, for each $n\geq1$, $\partial\Omega_{n}^{\varepsilon}$ and
$\partial\Omega_{n}$ are strictly convex $C^{\infty}$-submanifold of
$\mathbb{R}^{N}$ of dimension $N-1$.

When $\mathbf{(S_{1})}$ holds,  we  take $\varepsilon_{0}>0$ such
that $w_{\varepsilon_{0}}$ is positive in $\Omega$. Define
\[
a_{*}:=\min_{x\in\Omega}w_{\varepsilon_{0}}(x).
\]
For this case, we extend $f$ from $f\in C^{\infty}[a_{*},+\infty)$
to $\tilde{f}\in C^{\infty}(\mathbb{R})$, where $\tilde{f}$ is
increasing on $\mathbb{R}$ and $\tilde{f}=f$ on $[a_{*}, +\infty)$.

Next, we still denote  $\tilde{f}$ by $f$ for convenience.

Take $\varepsilon<\varepsilon_{0}$, it is clear that
$w_{\varepsilon}>w_{\varepsilon_{0}} \mbox{ in }\Omega.$ We see from
\eqref{4f4} that
\[
S_{k}(D^{2}w_{\varepsilon}(x))>b(x)f(w_{\varepsilon}(x)),\,x\in\bar{\Omega}^{\varepsilon}_{n}.
\] By Lemma
\ref{lemma2.6},  there exists a $k$-convex function
$u_{n}^{\varepsilon}$ for $n\geq 1$ such that
\begin{equation}\label{3f31}
S_{k}(D^{2}u_{n}^{\varepsilon}(x))=b(x)f(u_{n}^{\varepsilon}(x)),\,x\in
\Omega_{n}^{\varepsilon},\,u_{n}^{\varepsilon}|_{\partial\Omega_{n}^{\varepsilon}}=\sigma_{n}.
\end{equation}
We conclude from Lemma \ref{lemma2.1} that
\begin{equation}\label{3f28}
w_{\varepsilon}\leq u_{n}^{\varepsilon} \mbox{ in
}\bar{\Omega}_{n}^{\varepsilon},\,\,\,w_{\varepsilon}\leq
u_{n+1}^{\varepsilon} \mbox{ in }\bar{\Omega}_{n+1}^{\varepsilon}
\end{equation}
and
\[
u_{n}^{\varepsilon}\leq \overline{u}\mbox{ in
}\Omega_{n}^{\varepsilon}.
\]
Moreover, by the definitions of $\Omega_{n}^{\varepsilon}$ and
$\Omega_{n}$, we see that for any $\varepsilon>0$, the following
hold
\begin{equation}\label{3f29}
\bar{\Omega}_{n}^{\varepsilon}\subset\Omega_{n+1}^{\varepsilon},\,\bar{\Omega}_{n}\subset
\Omega_{n+1},\,\Omega_{n}\subset \Omega_{n}^{\varepsilon} \mbox{ and
}\bigcup_{n=1}^{\infty}\Omega_{n}^{\varepsilon}=\bigcup_{n=1}^{\infty}\Omega_{n}=\Omega.
\end{equation}
Combining \eqref{3f28} and \eqref{3f29}, we obtain
$u_{n+1}^{\varepsilon}\geq w_{\varepsilon}=u_{n}^{\varepsilon}$ on
$\partial\Omega_{n}^{\varepsilon}$. It follows from Lemma
\ref{lemma2.1} that
\[
u_{n}^{\varepsilon}\leq u_{n+1}^{\varepsilon}  \mbox{ in
}\bar{\Omega}_{n}^{\varepsilon}.
\]
So, we have
\begin{equation}\label{3f30}
w_{\varepsilon}\leq u_{n}^{\varepsilon}\leq
u_{n+1}^{\varepsilon}\leq \overline{u} \mbox{ in }\Omega_{n}.
\end{equation}

Let $\{\varepsilon_{n}\}_{n=1}^{+\infty}$ with
$\varepsilon_{1}<\varepsilon_{0}$ be a strictly decreasing sequence
of positive numbers such that $\varepsilon_{n}\rightarrow0$ as
$n\rightarrow+\infty$. By \eqref{3f30}, we have for fixed
$\varepsilon_{i}$,
\begin{equation}\label{3f32}
w_{\varepsilon_{i}}\leq u_{n}^{\varepsilon_{i}}\leq
u_{n+1}^{\varepsilon_{i}}\leq \overline{u} \mbox{ in }\Omega_{n}.
\end{equation}
This implies that for every $x\in \Omega$,
\[
u^{i}(x):=\lim_{n\rightarrow+\infty}u_{n}^{\varepsilon_{i}}(x)
\mbox{ exists}
\]
and
\[
\min_{x\in\bar{\Omega}_{n}}w_{\varepsilon_{1}}(x)\leq u^{i}(x)\leq
\max_{x\in\bar{\Omega}_{n}}\overline{u}(x),\,x\in \bar{\Omega}_{n}.
\]
Fix an integer $m$. For any $n>m$, we have
\[
\bar{\Omega}_{m}\subset\Omega_{n}
\]
and
\[
\mbox{dist}(\bar{\Omega}_{m}, \partial\Omega_{m+1})\leq
\mbox{dist}(\bar{\Omega}_{m}, \partial\Omega_{n})\leq
\mbox{dist}(\Omega_{m}, \partial\Omega).
\]
Since $u_{n}^{\varepsilon_{i}}$ is a $k$-convex solution to
\eqref{3f31}, by Lemma \ref{lemma2.5}, we obtain that for fixed
integer $j\geq 3$, there exists a positive constant $C_{j, m}$
(corresponding to $j$ and $m$) independent of $n$ such that for any
$n\geq m$, it holds
\[
||u_{n}^{\varepsilon_{i}}||_{C^{j}(\bar{\Omega}_{m})}\leq C_{j, m}.
\]
By Arzela-Ascoli's theorem, we can take a subsequence of
$\{u_{n}^{\varepsilon_{i}}\}_{n=1}^{+\infty}$, still denoted by
itself, such that $u_{n}^{\varepsilon_{i}}\rightarrow u^{i}$ in
$C^{j-1}(\bar{\Omega}_{m})$. Hence, for any $x\in \bar{\Omega}_{m}$,
the following holds
\[
S_{k}(D^{2}u^{i}(x))=\lim_{n\rightarrow\infty}S_{k}(D^{2}u_{n}^{\varepsilon_{i}}(x))=b(x)\lim_{n\rightarrow\infty}f(u_{n}^{\varepsilon_{i}}(x))=b(x)f(u^{i}(x)).
\]
Since $j, m$ are arbitrary, $u_{n}^{\varepsilon_{i}}$ is $k$-convex
solution of \eqref{3f31} and
\[
\lim_{d(x)\rightarrow0}w_{\varepsilon_{1}}(x)=+\infty,
\]
we obtain that $u^{i}\in C^{\infty}(\Omega)$ is a $k$-convex
solution to problem \eqref{M}.

On the other hand, we note that
\[
u_{n}^{\varepsilon_{i}}(x)\leq u_{n}^{\varepsilon_{i+1}}(x)\leq
\overline{u}(x),\,x\in \Omega_{n}.
\]
This combined with \eqref{3f32} shows that
\[
w_{\varepsilon_{i}}(x)\leq u^{i}(x)\leq u^{i+1}(x)\leq
\overline{u}(x),\,x\in\Omega.
\]
So, we have
\[
u(x):=\lim_{i\rightarrow+\infty}u^{i}(x) \mbox{ exists} \mbox{ and }
w_{\varepsilon_{i}}(x)\leq u(x)\leq \overline{u}(x),\,x\in\Omega.
\]
Passing to $i\rightarrow+\infty$, we obtain
\[
\underline{u}(x)\leq u(x)\leq \overline{u}(x),\,x\in\Omega.
\]
By the same argument as the  above, we  see that $u\in
C^{\infty}(\Omega)$ is $k$-convex and satisfies \eqref{M}.
\end{proof}
\begin{lemma}\label{Lemma4.2}
Let $\mathcal {X}$ be an arbitrary interval and  $h(x,t)$ be
 a continuous function on $\bar{\Omega}\times\mathcal {X}$,  then
\[t\mapsto\max_{x\in\bar{\Omega}}h(x, t) \mbox{ and }
t\mapsto\min_{x\in\bar{\Omega}}h(x, t)\] are continuous on $\mathcal
{X}$.
\end{lemma}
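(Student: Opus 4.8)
The plan is to reduce this to the classical fact that a parametrized family of continuous functions on a fixed compact set has continuous sup and inf, using that $\bar{\Omega}$ is compact so that all the maxima and minima are actually attained. First I would observe that for each fixed $t\in\mathcal{X}$ the map $x\mapsto h(x,t)$ is continuous on the compact set $\bar{\Omega}$, hence $M(t):=\max_{x\in\bar{\Omega}}h(x,t)$ and $m(t):=\min_{x\in\bar{\Omega}}h(x,t)$ are well defined and finite; moreover $m(t)=-\max_{x\in\bar{\Omega}}\bigl(-h(x,t)\bigr)$, so it suffices to prove the statement for $M$.

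Since continuity is a local property, fix $t_{0}\in\mathcal{X}$ and choose a closed subinterval $[a,b]\subset\mathcal{X}$ with $t_{0}\in[a,b]$ (a one-sided interval if $t_{0}$ is an endpoint of $\mathcal{X}$). The product $\bar{\Omega}\times[a,b]$ is compact, so $h$ is uniformly continuous on it: given $\varepsilon>0$ there is $\delta>0$ such that $|h(x,t)-h(x,t')|<\varepsilon$ for every $x\in\bar{\Omega}$ whenever $t,t'\in[a,b]$ with $|t-t'|<\delta$. For such $t,t'$ and every $x$ we then have $h(x,t)\le h(x,t')+\varepsilon\le M(t')+\varepsilon$, and taking the maximum over $x\in\bar{\Omega}$ gives $M(t)\le M(t')+\varepsilon$; by symmetry $M(t')\le M(t)+\varepsilon$, hence $|M(t)-M(t')|\le\varepsilon$. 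Thus $M$ is continuous on $[a,b]$, in particular at $t_{0}$, and since $t_{0}\in\mathcal{X}$ was arbitrary, $M$ is continuous on $\mathcal{X}$; the same argument (or the identity above) gives continuity of $m$.

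An equivalent sequential argument would run as follows: if $t_{n}\to t_{0}$ in $\mathcal{X}$ and $x_{n}\in\bar{\Omega}$ realizes $M(t_{n})=h(x_{n},t_{n})$, then every subsequence of $(x_{n})$ has a further subsequence converging in $\bar{\Omega}$ to some $x_{\ast}$, along which $M(t_{n})=h(x_{n},t_{n})\to h(x_{\ast},t_{0})\le M(t_{0})$, so $\limsup_{n}M(t_{n})\le M(t_{0})$; while choosing $x_{\ast}$ with $M(t_{0})=h(x_{\ast},t_{0})$ gives $M(t_{n})\ge h(x_{\ast},t_{n})\to M(t_{0})$, hence $\liminf_{n}M(t_{n})\ge M(t_{0})$. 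I do not expect a genuine obstacle here: the only points deserving a word of care are that the extrema are attained — which is precisely where compactness of $\bar{\Omega}$ is used — and the treatment of an endpoint of $\mathcal{X}$, handled by the locality of continuity and one-sided neighborhoods.
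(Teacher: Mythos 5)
Your proof is correct, and your primary argument takes a genuinely different route from the paper's. The paper argues by contradiction: it supposes $t\mapsto\max_{x\in\bar{\Omega}}h(x,t)$ fails to be continuous at some $t_{0}$, extracts maximizers $x_{n}$ for the offending sequence $t_{n}\to t_{0}$, passes to a convergent subsequence $x_{n}\to x_{\ast}$ in $\bar{\Omega}$, and derives a contradiction with $\max_{x}h(x,t_{0})=h(x_{0},t_{0})$ by comparing $h(x_{n},t_{n})\geq h(x_{0},t_{n})$ in the limit. Your main argument instead invokes uniform continuity of $h$ on the compact set $\bar{\Omega}\times[a,b]$ to obtain directly the Lipschitz-type modulus $|M(t)-M(t')|\leq\varepsilon$ whenever $|t-t'|<\delta$; this is shorter, avoids contradiction, and makes the local nature of the statement transparent (including the endpoint case). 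Your "equivalent sequential argument" is essentially the paper's proof recast as a direct $\limsup$/$\liminf$ computation rather than a reductio, so the two writeups meet there. Both approaches buy the same conclusion; the uniform-continuity route is the more economical one, while the sequential route more closely mirrors how the lemma is actually applied later (to expressions $\omega_{j}(\tau,x)$ where one varies $\tau$ and seeks extremizers in $x$).
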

\begin{proof}
$\forall\,t_{0}\in\mathcal {X}$, we show that
$t\mapsto\max_{x\in\bar{\Omega}}h(x, t)$ is continuous at $t_{0}$.
Otherwise, there exist constant $\varepsilon_{0}>0$ and a sequence
  of numbers $\{t_{n}\}_{n=1}^{+\infty}$ satisfying
$t_{n}\rightarrow t_{0}$ as $n\rightarrow+\infty$ such that
\begin{equation}\label{3f33}
\big|\max_{x\in \bar{\Omega}}h(x, t_{n})-\max_{x\in\bar{\Omega}}h(x,
t_{0})\big|>\varepsilon_{0}.
\end{equation}
Since $h(x, t_{0})$ is continuous in $\bar{\Omega}$, we can take
$x_{0}\in\bar{\Omega}$ such that
\begin{equation}\label{3f34}
\max_{x\in\bar{\Omega}}h(x, t_{0})=h(x_{0}, t_{0}).
\end{equation}
In same way, we can take $x_{n}\in \bar{\Omega}$ such that
\begin{equation}\label{3f35}
\max_{x\in\bar{\Omega}} h(x, t_{n})=h(x_{n}, t_{n}).
\end{equation}
It follows by \eqref{3f33}-\eqref{3f35} that
\begin{equation}\label{3f36}
\big|h(x_{n}, t_{n})-h(x_{0}, t_{0})\big|>\varepsilon_{0}.
\end{equation}
Since $\bar{\Omega}$ is a bounded closed domain in $\mathbb{R}^{N}$,
$\{x_{n}\}_{n=1}^{+\infty}$ has a convergent subsequence. For
convenience, we still denote the subsequence by
$\{x_{n}\}_{n=1}^{+\infty}$. So, there exists $x_{*}\in
\bar{\Omega}$ such that $(x_{n}, t_{n})\rightarrow (x_{*}, t_{0})$.
This together with \eqref{3f36} implies that
\begin{equation}\label{3f37}
\big|h(x_{*}, t_{0})-h(x_{0}, t_{0})\big|\geq \varepsilon_{0}.
\end{equation}

Recalling \eqref{3f35}, we see that
\[
h(x_{n}, t_{n})\geq h(x_{0}, t_{n}).
\]
Letting $n\rightarrow+\infty$, we obtain
\[
h(x_{*}, t_{0})\geq h(x_{0}, t_{0}).
\]
This together with \eqref{3f37} implies that
\[
\max_{x\in\bar{\Omega}}h(x, t_{0})<h(x_{*}, t_{0}).
\]
This is a contradiction with \eqref{3f34}.

By similar arguments as the above, we can obtain
$t\mapsto\min_{x\in\bar{\Omega}}h(x, t)$ is continuous on $\mathcal
{X}$.
\end{proof}
\begin{remark}
In Lemma \ref{Lemma4.2}, if we replace the interval $[t_{*}, t^{*}]$
by $\mathbb{R}$, then the result still  holds.
\end{remark}
\subsection{Proof of Theorem \ref{thm2.1}}
\begin{proof}
The definition of $v$ implies that $(v_{x_{i}x_{j}})$ is negative
definite on $\bar{\Omega}$. So, each principal submatrix of size $k$
of $(v_{x_{i}x_{j}})$ is also negative definite. It follows that
there exist positive constants $e_{1}<e_{2}$ such that
\begin{equation}\label{f6}
e_{1}||\nabla v_{i}(x)||^{2}\leq(-\nabla
v_{i}(x))^{T}B(v_{i}(x))\nabla v_{i}(x)\leq e_{2}||\nabla
v_{i}(x)||^{2} \mbox{ and }\Delta
v_{i}=\sum_{j=1}^{k}v_{x_{ij}x_{ij}}(x)<0,\,x\in\bar{\Omega}.
\end{equation}
From Hopf's maximum principle, there exist  positive constants $e$
and $\delta_{1}$ such that
\begin{equation}\label{f7}
||\nabla v_{i}||^{2}>e \mbox{ in }\bar{\Omega}_{\delta_{1}}.
\end{equation}
where $\Omega_{\delta_{1}}$ is defined as shown in \eqref{cj}.
Combining \eqref{f6}-\eqref{f7}, we obtain that
\begin{equation}\label{3f14}
\sum_{i=1}^{C_{N}^{k}}(-1)^{k}det(v_{x_{is}x_{ij}})(-\nabla
v_{i})^{T}B(v_{i})\nabla v_{i}
\end{equation}
is nonnegative in $\bar{\Omega}$ and positive in
$\bar{\Omega}_{\delta_{1}}$. So, we have
\[
\omega_{0}>0 \mbox{ in }\bar{\Omega},
\]
where $\omega_{0}$ is given by \eqref{fb}.

Let $\bar{u}_{\lambda}(x)=m_{0}(v(x))^{-\alpha},\,x\in\Omega$, where
$m_{0}$ and $\alpha$ are given by \eqref{fa}. By
$\mathbf{(b_{1})}$-$\mathbf{(b_{2})}$ and Lemma \ref{lemma2.3}, we
obtain
\begin{equation}\label{3f8}
\begin{split}
S_{k}(D^{2}\underline{u}_{\lambda}(x))&=(m_{0}\alpha)^{k}(v(x))^{-(\alpha+1)k-1}\omega_{0}(x)\\
&\geq
c_{0}(m_{0}\alpha)^{k}(v(x))^{-(\alpha+1)k-1}=c_{0}\alpha^{k}m_{0}^{k-\gamma}(v(x))^{\lambda}(m_{0}(v(x))^{-\alpha})^{\gamma}\geq
b(x)\underline{u}_{\lambda}^{\gamma}(x),\,x\in\Omega,
\end{split}
\end{equation}
i.e., $\underline{u}_{\lambda}$ is a subsolution to problem
\eqref{M} in $\Omega$. Moreover, by a similar calculation as
\eqref{3f8}, we see that $S_{i}(D^{2}\underline{u}_{\lambda})>0$ in
$\Omega$ for $i=1,\cdot\cdot\cdot,N$. This implies that
$\underline{u}_{\lambda}$ is strictly convex in $\Omega$.

In a similar way, we can show that
$\overline{u}_{\lambda}=M_{0}v^{-\alpha}$ is a strictly convex
supersolution to problem \eqref{M} in $\Omega$, where $M$ is given
by \eqref{fa}. Since $\underline{u}_{\lambda}\leq
\overline{u}_{\lambda}$ in $\Omega$, by Theorem 4.2 of \cite{Salani}
and Lemma \ref{Lemma4.1}, we obtain that problem \eqref{M} has a
unique classical $k$-convex solution $u_{\lambda}\in
C^{\infty}(\Omega)$ satisfying
\begin{equation}\label{3f4}
\underline{u}_{\lambda}(x)\leq u_{\lambda}(x)\leq
\overline{u}_{\lambda}(x),\,x\in\Omega.
\end{equation}
We see by Lemma \ref{Lemma4.2} that $c_{0}$ and $C_{0}$ are
continuous on $[-k-1, +\infty)$ and $c_{0}(-k-1)=c_{1}$ and
$C_{0}(-k-1)=C_{1}$,  where $c_{1}$ and $C_{1}$ are given by
\eqref{f1}. Let $\Omega_{1}$ be an arbitrary compact subset of
$\Omega$. Passing to $\lambda\rightarrow-k-1$ and
$\lambda\rightarrow +\infty$, we obtain that \eqref{f8}-\eqref{f9}
hold.
\end{proof}
\subsection{Proof of Theorem
\ref{thm2.2}}
\begin{proof}
By the similar argument as in the proof of Theorem \ref{thm2.1}, we
see that
\[
\omega_{1}>0 \mbox{ in }\bar{\Omega},
\]
where $\omega_{1}$ is given by \eqref{2J1}.

Let $\underline{u}_{\lambda}(x)=m_{1}-(k+1+\lambda)\ln
v(x),\,x\in\Omega$, where $m_{1}$ is given by \eqref{f13}. By
$\mathbf{(b_{1})}$-$\mathbf{(b_{2})}$ and Lemma \ref{lemma2.3}, we
obtain
\begin{equation}\label{3f9}
\begin{split}
S_{k}(D^{2}\underline{u}_{\lambda}(x))&=(k+1+\lambda)^{k}(v(x))^{\lambda}(v(x))^{-(k+1+\lambda)}\omega_{1}(x)\\
&\geq
c_{1}(k+1+\lambda)^{k}\exp(-m_{1})(v(x))^{\lambda}\exp(\underline{u}_{\lambda}(x))\geq
b(x)\exp(\underline{u}_{\lambda}(x)),\,x\in\Omega,
\end{split}
\end{equation}
i.e., $\underline{u}_{\lambda}$ is a subsolution to problem
\eqref{M} in $\Omega$. Moreover, by a similar calculation as
\eqref{3f9}, we see that $\underline{u}_{\lambda}$ is  strictly
convex in $\Omega$.

In a similar way, we can show that
$\overline{u}_{\lambda}=M_{1}-(k+1+\lambda)\ln v$ is a strictly
convex supersolution in $\Omega$, where $M_{1}$ is given by
\eqref{f13}. Obviously, $\underline{u}_{\lambda}\leq
\overline{u}_{\lambda}$ in $\Omega$.  So, by Lemma \ref{Lemma4.1},
we obtain that problem \eqref{M} has a classical $k$-convex solution
$u_{\lambda}\in C^{\infty}(\Omega)$ satisfying \eqref{3f4}. By a
direct calculation, we see that \eqref{f14}-\eqref{f15} hold.
\end{proof}
\subsection{Proof of Theorem \ref{thm2.3}}
\begin{proof}
If $\mathbf{(f_{1})}$-$\mathbf{(f_{3})}$ hold, then  by Lemma
\ref{lemma24} $\mathbf{(ii)}$-$\mathbf{(iii)}$, we see that
\begin{equation}\label{3f1}
0<h_{0}=\inf\limits_{t>0}\Psi(t)\leq  \min\{C_{f}^{0},\,
C_{f}^{+\infty}\}\leq\max\{C_{f}^{0},\,
C_{f}^{+\infty}\}\leq\sup\limits_{t>0}\Psi(t):=H_{0}<+\infty.
\end{equation}
If $\mathbf{(f_{1})}$-$\mathbf{(f_{2})}$ and $\mathbf{(f_{4})}$
hold, then by Lemma \ref{lemma24} $\mathbf{(ii)}$ and
$\mathbf{(iv)}$, we obtain
\begin{equation}\label{3f2}
0<h_{0}=\inf\limits_{t>0}\Psi(t)\leq C_{f}^{-\infty}\leq1\leq
C_{f}^{+\infty}\leq\sup\limits_{t>0}\Psi(t):=H_{0}<+\infty.
\end{equation}
Moreover, by a direct calculation, we have
\begin{equation}\label{3f3}
\Psi(t)=\frac{1}{k}f^{\frac{1-k}{k}}(\psi(t))f'(\psi(t))t.
\end{equation}
Combining \eqref{3f1}-\eqref{3f3}, \eqref{f11} and a similar
argument as in the proof of Theorem \ref{thm2.1}, we obtain that
there exist positive constants $\beta_{1}<\beta_{2}$ such that for
any $\tau>0$, it holds
\begin{equation}\label{3f38}
\beta_{1}<\omega_{2}(\tau, x)<\beta_{2},\,\forall\,
x\in\bar{\Omega},
\end{equation}
where $\omega_{2}$ is given by \eqref{2J2}.

On the other hand, we see from Lemma \ref{Lemma4.2} that
\[
\tau\mapsto\tau^{k}\max_{x\in\bar{\Omega}}\omega_{2}(\tau, x) \mbox{
and } \tau\mapsto\tau^{k}\min_{x\in\bar{\Omega}}\omega_{2}(\tau, x)
\]
are continuous functions on $(0, +\infty)$. This together with
\eqref{3f38} implies that
\[
\tau^{k}\beta_{1}\leq\tau^{k}\min_{x\in\bar{\Omega}}\omega_{2}(\tau,
x)\leq\tau^{k}\max_{x\in\bar{\Omega}}\omega_{2}(\tau,
x)\leq\tau^{k}\beta_{2}.
\]
The existence theorem of zero point of continuous function implies
that there exist positive constants $\tau_{1}$ and $\tau_{2}$
$(\tau_{1}\leq\tau_{2})$ such that \eqref{f12} holds.

Let
$\underline{u}_{\lambda}(x)=\psi(\tau_{2}\eta^{-1}(v(x))^{\eta}),\,x\in\Omega$,
where $\eta$ is given by \eqref{ftj}. By
$\mathbf{(b_{1})}$-$\mathbf{(b_{2})}$ and Lemma \ref{lemma2.3}, we
obtain
\begin{equation}\label{3f7}
\begin{split}
S_{k}(D^{2}\underline{u}_{\sigma}(x))&=\tau^{k}(-\psi'(\tau_{2}\eta^{-1}(v(x))^{\eta}))^{k}(v(x))^{(\eta-1)k-1}\omega_{2}(\tau_{2}, x)\\
&\geq \tau_{2}^{k}\min_{x\in\bar{\Omega}}\omega(\tau_{2}, x)(v(x))^{\lambda}f(\psi(\tau_{2}\eta^{-1}(v(x))^{\eta}))\\
&=b_{2}(v(x))^{\lambda}f(\psi(\tau_{2}\eta^{-1}(v(x))^{\eta}))=b(x)f(\psi(\tau_{2}\eta^{-1}(v(x))^{\eta})),\,x\in\Omega,
\end{split}
\end{equation}
i.e., $\underline{u}_{\lambda}$ is a subsolution to problem
\eqref{M} in $\Omega$. Moreover, by a similar calculation as
\eqref{3f7}, we see that $\underline{u}_{\lambda}$ is strictly
convex in $\Omega$.

In a similar way, we can show that
$\overline{u}_{\lambda}=\psi(\tau_{1}\eta^{-1}v^{\eta})$ is a
strictly convex supersolution to problem \eqref{M} in $\Omega$.
Obviously, $\underline{u}_{\lambda}\leq \overline{u}_{\lambda}$ in
$\Omega$. So, by Lemma \ref{Lemma4.1}, we obtain that problem
\eqref{M} has a classical $k$-convex solution $u_{\lambda}\in
C^{\infty}(\Omega)$ satisfying \eqref{3f4}.

Take $\eta_{*}>0$ and let
\[
\begin{split}
\overline{\omega}(x,
\eta)&=v(x)(-1)^{k}S_{k}(D^{2}v(x))\\
&+((H_{0}-1)\eta+1)\sum_{i=1}^{C_{N}^{k}}(-1)^{k}det(v_{x_{is}x_{ij}}(x))(-\nabla
v_{i}(x))B(v_{i}(x))\nabla v_{i}(x),\,(x,
\eta)\in\bar{\Omega}\times[0, \eta_{*}]
\end{split}
\]
and
\[
\begin{split}
\underline{\omega}(x,
\eta)&=v(x)(-1)^{k}S_{k}(D^{2}v(x))\\
&+((h_{0}-1)\eta+1)\sum_{i=1}^{C_{N}^{k}}(-1)^{k}det(v_{x_{is}x_{ij}}(x))(-\nabla
v_{i}(x))B(v_{i}(x))\nabla v_{i}(x),\,(x,
\eta)\in\bar{\Omega}\times[0, \eta_{*}],
\end{split}
\]
So,  we have
\[
\bigg(\frac{b_{1}}{\max\limits_{(x, \eta)\in\bar{\Omega}\times[0,
\eta_{*}]}\overline{\omega}(x,
\eta)}\bigg)^{1/k}\leq\tau_{1}\leq\tau_{2}\leq\bigg(\frac{b_{2}}{\min\limits_{(x,
\eta)\in\bar{\Omega}\times[0, \eta_{*}]}\underline{\omega}(x,
\eta)}\bigg)^{1/k}.
\]
If $\mathbf{(f_{3})}$ holds, then by Lemma \ref{lemma24}
$\mathbf{(iii)}$ and Proposition \ref{ppp}, we see that
\begin{equation}\label{3f5}
\lim_{\eta\rightarrow0^{+}}\frac{\psi(\tau_{j}\eta^{-1}(v(x))^{\eta})}{\psi(\eta^{-1})}=\tau_{j}^{1-C_{f}^{0}},\,j=1,2.
\end{equation}
If $\mathbf{(f_{4})}$ holds, then by Lemma \ref{lemma24}
$\mathbf{(iv)}$ and Proposition \ref{ppp}, we see that
\begin{equation}\label{3f6}
\lim_{\eta\rightarrow0^{+}}\frac{\psi(\tau_{j}\eta^{-1}(v(x))^{\eta})}{\psi(\eta^{-1})}=\tau_{j}^{1-C_{f}^{-\infty}},\,j=1,2.
\end{equation}
Thus, \eqref{3f5} (or \eqref{3f6}) implies that \eqref{f2} (or
\eqref{f3}) holds. On the other hand, it follows by
\[
\psi((b_{2}/c_{1})^{1/k}\eta^{-1})\leq
\min_{x\in\bar{\Omega}_{1}}\psi(\tau_{2}\eta^{-1}(v(x))^{\eta})
\mbox{ and }
\lim_{\eta\rightarrow+\infty}\psi((b_{2}/c_{1})^{1/k}\eta^{-1})=+\infty,
\]
that
\[
\lim_{\eta\rightarrow+\infty}\min_{x\in\bar{\Omega}_{1}}u_{\lambda}(x)=+\infty.
\]
\end{proof}
\subsection{Proof of Theorem \ref{thm2.4}}
\begin{proof}
Without loss of generality, we assume
\[
\max_{x\in\bar{\Omega}}v(x)<\exp(-\mu).
\]
This implies that $1-\mu(\ln v(x))^{-1}>0,\,x\in\bar{\Omega}$. By
\eqref{3f1}-\eqref{3f3} and a similar argument as in the proof of
Theorem \ref{thm2.1}, we obtain that there exist positive constant
$\beta_{3}<\beta_{4}$ such that for any $\tau>0$, it holds
\[
\beta_{3}<\omega_{3}(\tau, x)<\beta_{4},\,x\in\bar{\Omega},
\]
where $\omega_{3}$ is given by \eqref{2J3}.

By the similar argument as in the proof of Theorem \ref{thm2.3}, we
see that there exist positive constants $\tau_{3}$ and $\tau_{4}$
$(\tau_{3}\leq \tau_{4})$ such that \eqref{f16} holds.

Let $\underline{u}_{\mu}=\psi(\tau_{4}(\mu-1)^{-1}(-\ln
v(x))^{1-\mu}),\,x\in\Omega$. By $\mathbf{(b_{1})}$,
$\mathbf{(b_{3})}$ and Lemma \ref{lemma2.3}, we obtain
\begin{equation}\label{3f10}
\begin{split}
S_{k}(D^{2}\underline{u}_{\mu}(x))&=\tau_{4}^{k}(-\psi'(\tau_{4}(\mu-1)^{-1}(-\ln
v(x))^{1-\mu}))^{k}(v(x))^{-(k+1)}(-\ln v(x))^{-\mu
k}\omega_{3}(\tau_{4}, x)\\
&\geq b_{2}(v(x))^{-(k+1)}(-\ln v(x))^{\mu
k}f(\underline{u}_{\mu})\geq
b(x)f(\underline{u}_{\mu}),\,x\in\Omega,
\end{split}
\end{equation}
i.e., $\underline{u}_{\mu}$ is a subsolution to problem \eqref{M} in
$\Omega$. Moreover, by a similar calculation as \eqref{3f10}, we see
that $\underline{u}_{\mu}$ is strictly convex in $\Omega$.

In a similar way, we can show that $\overline{u}_{\mu}$ is a
strictly convex supersolution to problem \eqref{M} in $\Omega$.
Obviously, $\underline{u}_{\mu}\leq \overline{u}_{\mu}$ in $\Omega$.
So, by Lemma \ref{Lemma4.1}, we obtain that problem \eqref{M} has a
classical $k$-convex solution $u_{\mu}\in C^{\infty}(\Omega)$
satisfying
\[
\underline{u}_{\mu}(x)\leq u_{\mu}(x)\leq
\overline{u}_{\mu}(x),\,x\in\Omega.
\]

Take $\mu_{*}>1$ and we may as well assume that $\max_{x\in
\bar{\Omega}}v(x)<\exp(-\mu_{*})$. Define
\[
\begin{split}
\overline{\omega}_{*}(x,\mu)&=v(x)(-1)^{k}S_{k}(D^{2}v(x))+\big(H_{0}(\mu-1)(-\ln v(x))^{-1}\\
&+1-\mu(\ln
v(x))^{-1}\big)\sum_{i=1}^{C_{N}^{k}}(-1)^{k}\mbox{det}(v_{x_{is}x_{ij}}(x))\\
&\times(-\nabla v_{i}(x))^{T}B(v_{i}(x))\nabla v_{i}(x),\,(x,
\mu)\in \bar{\Omega}\times [1, \mu_{*}]
\end{split}
\]
and
\[
\begin{split}
\underline{\omega}_{*}(x,\mu)&=v(x)(-1)^{k}S_{k}(D^{2}v(x))+\big(h_{0}(\mu-1)(-\ln v(x))^{-1}\\
&+1-\mu(\ln
v(x))^{-1}\big)\sum_{i=1}^{C_{N}^{k}}(-1)^{k}\mbox{det}(v_{x_{is}x_{ij}}(x))\\
&\times(-\nabla v_{i}(x))^{T}B(v_{i}(x))\nabla v_{i}(x),\,(x,
\mu)\in \bar{\Omega}\times [1, \mu_{*}],
\end{split}
\]
where $H_{0}$ and $h_{0}$ are given by \eqref{3f1} and \eqref{3f2}.
So,  we have
\[
\bigg(\frac{b_{1}}{\max\limits_{(x, \mu)\in\bar{\Omega}\times[0,
\mu_{*}]}\overline{\omega}_{*}(x,
\mu)}\bigg)^{1/k}\leq\tau_{3}\leq\tau_{4}\leq\bigg(\frac{b_{2}}{\min\limits_{(x,
\mu)\in\bar{\Omega}\times[0, \mu_{*}]}\underline{\omega}_{*}(x,
\mu)}\bigg)^{1/k}.
\]
The rest of the proof is similar to that of Theorem \ref{thm2.3} and
the proof is omitted here.
\end{proof}
\subsection{Proof of Theorem \ref{thm2.5}}
\begin{proof}
By Proposition \ref{HUR} $\mathbf{(iv)}$, we have
\begin{equation}\label{3f11}
\lim_{d(x)\rightarrow0}\frac{(v(x))^{\frac{k+1+\lambda}{k}}\tilde{L}(v(x))}{\int_{0}^{v(x)}s^{\frac{1+\lambda}{k}}\tilde{L}(s)ds}=\frac{k+1+\lambda}{k}.
\end{equation}
This implies that
\[
\lim_{d(x)\rightarrow0}\bigg(\frac{(v(x))^{\frac{k+1+\lambda}{k}}\tilde{L}(v(x))}{\int_{0}^{v(x)}s^{\frac{1+\lambda}{k}}\tilde{L}(s)ds}h_{0}-\frac{\lambda+1}{k}-\frac{v(x)\tilde{L}'(v(x))}{\tilde{L}(v(x))}\bigg)=\frac{kh_{0}+(1+\lambda)(h_{0}-1)}{k}.
\]
Since \eqref{f17} holds, without loss of generality, we always
assume that
\[
\min_{x\in\bar{\Omega}}\bigg(\frac{(v(x))^{\frac{k+1+\lambda}{k}}\tilde{L}(v(x))}{\int_{0}^{v(x)}s^{\frac{1+\lambda}{k}}\tilde{L}(s)ds}h_{0}-\frac{\lambda+1}{k}-\frac{v(x)\tilde{L}'(v(x))}{\tilde{L}(v(x))}\bigg)>0.
\]
By using this, combined with \eqref{3f1}-\eqref{3f3} and a similar
argument as in the proof of Theorem \ref{thm2.1}, we obtain that
there exist positive constants $\beta_{5}<\beta_{6}$ such that for
any $\tau>0$, it holds
\[
\beta_{5}<\omega_{4}(\tau, x)<\beta_{6},\,x\in\Omega,
\]
where $\omega_{4}$ is given by \eqref{2J4}.

By the similar argument as in the proof of Theorem \ref{thm2.3}, we
see that there exist positive constants $\tau_{5}$ and $\tau_{6}$
$(\tau_{5}\leq \tau_{6})$ such that \eqref{f18} holds.

Let
$\underline{u}(x)=\psi\big(\tau_{6}\int_{0}^{v(x)}s^{\frac{1+\lambda}{k}}\tilde{L}(s)ds\big),\,x\in\Omega$.
By $\mathbf{(b_{1})}$, $\mathbf{(b_{4})}$ and Lemma \ref{lemma2.3},
we obtain
\[
\begin{split}
S_{k}(D^{2}\underline{u}(x))&=\tau_{6}^{k}\bigg(-\psi'\bigg(\tau_{6}\int_{0}^{v(x)}s^{\frac{1+\lambda}{k}}\tilde{L}(s)ds\bigg)\bigg)^{k}(v(x))^{\lambda}\tilde{L}^{k}(v(x))\omega_{4}(\tau_{6},
x)\\
&\geq\tau_{6}^{k}\min_{x\in\bar{\Omega}}\omega_{4}(\tau_{6},
x)(v(x))^{\lambda}\tilde{L}^{k}(v(x))f(\underline{u}(x))\\
&=b_{2}(v(x))^{\lambda}\tilde{L}^{k}(v(x))f(\underline{u}(x))\geq
b(x)f(\underline{u}(x)),\,x\in\Omega,
\end{split}
\]
i.e., $\underline{u}$ is a subsolution to problem \eqref{M} in
$\Omega$. Moreover, by a straightforward calculation, we have
\[
\begin{split}
S_{l}(D^{2}\underline{u}(x))&=\tau_{6}^{l}\bigg(-\psi'\bigg(\tau_{6}\int_{0}^{v(x)}s^{\frac{1+\lambda}{k}}\tilde{L}(s)ds\bigg)\bigg)^{l}(v(x))^{\lambda}\tilde{L}^{l}(v(x))\\
&\times(v(x))^{\frac{(l-k)(1+\lambda)}{k}}\bigg[v(x)\cdot(-1)^{l}S_{l}(D^{2}v(x))+\bigg(\Psi\bigg(\tau_{6}\int_{0}^{v(x)}s^{\frac{1+\lambda}{k}}\tilde{L}(s)ds\bigg)\\
&\times\frac{(v(x))^{\frac{k+1+\lambda}{k}}\tilde{L}(v(x))}{\int_{0}^{v(x)}s^{\frac{1+\lambda}{k}}\tilde{L}(s)ds}-\frac{\lambda+1}{k}-\frac{v(x)\tilde{L}'(v(x))}{\tilde{L}(v(x))}\bigg)\\
&\times\sum_{i=1}^{C_{N}^{l}}(-1)^{l}det(v_{x_{is}{x_{ij}}}(x))(-\nabla
v_{i}(x))^{T}B(v_{i}(x))\nabla
v_{i}(x)\bigg]>0,\,x\in\Omega,\,l=1,\cdot\cdot\cdot,N.
\end{split}
\]
This implies that $\underline{u}$ is strictly convex in $\Omega$.

In a similar way, we can show that
$\overline{u}=\psi\big(\tau_{5}\int_{0}^{v}s^{\frac{1+\lambda}{k}}\tilde{L}(s)ds\big)$
is a strictly convex supersolution in $\Omega$. Obviously,
$\underline{u}\leq \overline{u}$ in $\Omega$. By Lemma
\ref{Lemma4.1}, we see that problem \eqref{M} has a classical
$k$-convex solution $u\in C^{\infty}(\Omega)$ satisfying
\begin{equation}\label{3f17}
\underline{u}(x)\leq u(x)\leq \overline{u}(x),\,x\in\Omega.
\end{equation}
\end{proof}
\subsection{Proof of Theorem \ref{thm2.6}}
\begin{proof}
By \eqref{3f11} and $\lambda<0$,  without loss of generality, we
assume that $v(x)$ satisfies
\begin{equation}\label{3f12}
\frac{(v(x))^{\frac{k+1+\lambda}{k}}\tilde{L}(v(x))}{(k+1)\int_{0}^{v(x)}s^{\frac{1+\lambda}{k}}\tilde{L}(s)ds}-\frac{\lambda+1}{k}-\frac{v(x)\tilde{L}(v(x))}{\tilde{L}(v(x))}>0,\,\forall
\,x\in\bar{\Omega}.
\end{equation}
On the other hand, by Lemma \ref{lemma29}
$\mathbf{(ii)}$-$\mathbf{(iv)}$, we see that
\begin{equation}\label{3f13}
0=\inf_{t>0}\Phi(t)\leq \sup_{t>0}\Phi(t)<+\infty.
\end{equation}
It follows from \eqref{3f12}-\eqref{3f13} that
\begin{equation}\label{3f15}
\sup_{(\tau, x)\in(0, 1]\times\Omega}\omega_{5}(\tau, x)<+\infty,
\end{equation}
where $\omega_{5}$ is given by \eqref{2J5}. Fix $\tau\in(0, 1]$. By
the similar argument as in the proof of Theorem \ref{thm2.1},  we
have
\[
\min_{x\in\bar{\Omega}}\omega_{5}(\tau, x)>0.
\]
This together with \eqref{3f15} implies that there exists
$\underline{\tau}\in(0, 1)$ such that
\begin{equation}\label{bpp}
\underline{\tau}^{k+1}\min_{x\in\bar{\Omega}}\omega_{5}(\underline{\tau},
x)<b_{2}.
\end{equation}

Take $\delta_{1}>0$ such that \eqref{3f14} is
positive in $\Omega_{\delta_{1}}$. We see from 
Lemma \ref{lemma29} $\mathbf{(ii)}$-$\mathbf{(iv)}$ that
\[
\begin{split}
0&<\inf_{\substack{(\tau, x)\in[1,
+\infty)\times\Omega\setminus\Omega_{\delta_{1}}}}\Phi\bigg(\tau\bigg(\int_{0}^{v(x)}s^{\frac{\lambda+1}{k}}\tilde{L}(s)ds\bigg)^{\frac{k}{k+1}}\bigg)\\
&\leq\sup_{\substack{(\tau, x)\in[1,
+\infty)\times\Omega\setminus\Omega_{\delta_{1}}}}\Phi\bigg(\tau\bigg(\int_{0}^{v(x)}s^{\frac{\lambda+1}{k}}\tilde{L}(s)ds\bigg)^{\frac{k}{k+1}}\bigg)<+\infty.
\end{split}
\]
This, combined with \eqref{3f12}, shows that
\begin{equation}\label{3f16}
0<\inf_{(\tau, x)\in[1, +\infty)\times\bar{\Omega}}\omega_{5}(\tau,
x)<\sup_{(\tau, x)\in[1, +\infty)\times\bar{\Omega}}\omega_{5}(\tau,
x)<+\infty.
\end{equation}
Fix $\tau\in[1, +\infty)$. By the similar argument as in the proof
of Theorem \ref{thm2.1},  we have
\[
\min_{x\in\bar{\Omega}}\omega_{5}(\tau, x)>0.
\]
This together with \eqref{3f16} implies that there exists
$\overline{\tau}\in [1, +\infty)$ such that
\begin{equation}\label{pbb}
\overline{\tau}^{k+1}\min_{x\in\bar{\Omega}}\omega_{5}(\overline{\tau},
x)>b_{2}.
\end{equation}
We conclude from \eqref{bpp}, \eqref{pbb} and Lemma \ref{Lemma4.2}
that there exists $\tau_{8}\in(\underline{\tau}, \overline{\tau})$
such that
\[
\tau_{8}^{k+1}\min_{x\in\bar{\Omega}}\omega_{5}(\tau_{8}, x)=b_{2}.
\]

By the similar argument as the above, we can show that there exists
positive constant $\tau_{7}$ with $\tau_{7}\leq \tau_{8}$ such that
\[
\tau_{7}^{k+1}\max_{x\in\bar{\Omega}}\omega_{5}(\tau_{7}, x)=b_{1}.
\]


Let
$\underline{u}(x)=\varphi\big(\tau_{8}(\int_{0}^{v(x)}s^{\frac{1+\lambda}{k}}\tilde{L}(s)ds)^{\frac{k}{k+1}}\big),\,x\in\Omega$.
By $\mathbf{(b_{1})}$, $\mathbf{(b_{4})}$ and Lemma \ref{lemma2.3},
we obtain
\[
\begin{split}
S_{k}(D^{2}\underline{u}(x))
&=\tau_{8}^{k+1}\bigg(-\varphi'\bigg(\tau\bigg(\int_{0}^{v(x)}s^{\frac{1+\lambda}{k}}\tilde{L}(s)ds\bigg)^{\frac{k}{k+1}}\bigg)\bigg)^{k-1}\\
&\times\varphi''\bigg(\tau\bigg(\int_{0}^{v(x)}s^{\frac{1+\lambda}{k}}\tilde{L}(s)ds\bigg)^{\frac{k}{k+1}}\bigg)\omega_{5}(\tau_{8},
x)\\
&\geq\tau_{8}^{k+1}\min_{x\in\bar{\Omega}}\omega_{5}(\tau_{8},
x)(v(x))^{\lambda}\tilde{L}^{k}(v(x))f(\underline{u}(x))\\
&=b_{2}(v(x))^{\lambda}\tilde{L}^{k}(v(x))f(\underline{u}(x))\geq
b(x)f(\underline{u}(x)),\,x\in\Omega,
\end{split}
\]
i.e., $\underline{u}$ is a subsolution to problem \eqref{M} in
$\Omega$. Moreover, by a straightforward calculation, we have
\[
\begin{split}
S_{l}(D^{2}\underline{u}(x))
&=\tau_{8}^{l+1}\bigg(\frac{k}{k+1}\bigg)^{k}\bigg(-\varphi'\bigg(\tau\bigg(\int_{0}^{v(x)}s^{\frac{1+\lambda}{k}}\tilde{L}(s)ds\bigg)^{\frac{k}{k+1}}\bigg)\bigg)^{l-1}\\
&\times\varphi''\bigg(\tau\bigg(\int_{0}^{v(x)}s^{\frac{1+\lambda}{k}}\tilde{L}(s)ds\bigg)^{\frac{k}{k+1}}\bigg)(v(x))^{\lambda}L^{l}(v(x))\\
&\times\bigg(\int_{0}^{v(x)}s^{\frac{1+\lambda}{k}}\tilde{L}(s)\bigg)^{\frac{k-l}{k+1}}(v(x))^{\frac{(l-k)(1+\lambda)}{k}}\bigg\{\Phi\bigg(\tau_{8}\bigg(\int_{0}^{v(x)}s^{\frac{1+\lambda}{k}}\tilde{L}(s)ds\bigg)^{\frac{k}{k+1}}\bigg)\\
&\times v(x)(-1)^{l}S_{l}(D^{2}v(x))+\bigg[\frac{(v(x))^{\frac{k+1+\lambda}{k}}\tilde{L}(v(x))}{\int_{0}^{v(x)}s^{\frac{1+\lambda}{k}}\tilde{L}(s)ds}+\Phi\bigg(\tau_{8}\bigg(\int_{0}^{v(x)}s^{\frac{1+\lambda}{k}}\tilde{L}(s)ds\bigg)^{\frac{k}{k+1}}\bigg)\\
&\times\bigg(\frac{(v(x))^{\frac{k+1+\lambda}{k}}\tilde{L}(v(x))}{(k+1)\int_{0}^{v(x)}s^{\frac{1+\lambda}{k}}\tilde{L}(s)ds}-\frac{\lambda+1}{k}-\frac{v(x)\tilde{L}'(v(x))}{\tilde{L}(v(x))}\bigg)\bigg]\\
&\times\sum_{i=1}^{C_{N}^{l}}(-1)^{l}det(
v_{x_{is}x_{ij}}(x))(-\nabla v_{i}(x))^{T}B(v_{i}(x))\nabla
v_{i}(x)\bigg\}>0,\,x\in\Omega,\,l=1,\cdot\cdot\cdot,N.
\end{split}
\]
This implies that $\underline{u}$ is strictly convex in $\Omega$.

In a similar way, we can show that
$\overline{u}=\varphi\big(\tau_{7}(\int_{0}^{v}s^{\frac{1+\lambda}{k}}\tilde{L}(s)ds)^{\frac{k}{k+1}}\big)$
is a strictly convex supersolution in $\Omega$. Obviously,
$\underline{u}\leq \overline{u}$ in $\Omega$. By Lemma
\ref{Lemma4.1}, we see that problem \eqref{M} has a classical
$k$-convex solution $u\in C^{\infty}(\Omega)$ satisfying
\eqref{3f17}.
\end{proof}
\section{The exact boundary behavior of large solutions}
In this section, we prove  Theorem \ref{thm2.8}.
\begin{proof}
Let $\varepsilon\in(0, b_{1}/2(1+C_{0}))$,
$C_{0}>\frac{b_{1}+b_{2}}{2}$ and
\[
\xi_{-\varepsilon}=\tau_{9}(1-(1+C_{0})\varepsilon/b_{1})^{1/k},\,\,\,\xi_{+\varepsilon}=\tau_{10}(1+(1+C_{0})\varepsilon/b_{2})^{1/k}.
\]
It follows that
\[
\tau_{9}\bigg(\frac{1}{2}\bigg)^{1/k}<\xi_{-\varepsilon}<\xi_{+\varepsilon}<\tau_{10}\bigg(\frac{3}{2}\bigg)^{1/k}.
\]
As \eqref{cj}, we define
\[
\Omega_{\delta_{*}}=\{x\in\Omega:0<d(x)<\delta_{*}\},
\]
where $\delta_{*}\in (0, \min\{\delta_{0},\,\tilde{\delta}\})$ and
$\tilde{\delta}$ is given in \eqref{gtg}.

Next, we consider the following two cases.

\noindent \textbf{Case 1.} $\theta$ is non-increasing on $(0,
\delta_{0})$. From Lemma \ref{lemma24} $\mathbf{(ii)}$ and Lemma
\ref{lemma28} $\mathbf{(i)}$, we see that corresponding to
$\varepsilon$, there exists sufficiently small constant
$\delta_{\varepsilon}\in(0, \delta_{*}/2)$ such that
$x\in\Omega_{2\delta_{\varepsilon}}$ and $r\in(0,
\delta_{\varepsilon})$, the following hold
\begin{equation}\label{3f18}
\frac{M_{k-1}^{-}}{1+\varepsilon}<S_{k-1}(\epsilon_{1},\cdot\cdot\cdot,\epsilon_{N-1})<\frac{M_{k-1}^{+}}{1-\varepsilon}\mbox{
and
}\frac{b_{1}-C_{0}\varepsilon}{1-\varepsilon}<\frac{b(x)}{\theta^{k+1}(d(x))}<\frac{b_{2}+C_{0}\varepsilon}{1+\varepsilon};
\end{equation}
\begin{equation}\label{3f19}
\mathfrak{X}_{1}(x,
\Theta_{r}^{\mp}(d(x))):=\frac{k+1}{k}\Psi\big(\xi_{\mp}(\Theta_{r}^{\mp}(d(x)))^{\frac{k+1}{k}}\big)-\frac{1}{k}
-\frac{\theta'(d(x))\Theta_{r}^{\mp}(d(x))}{\theta^{2}(d(x))}>0;
\end{equation}
\begin{equation}\label{3f21}
\begin{split}
\mathfrak{X}_{2}(x,
\Theta_{r}^{\mp}(d(x))):=&\bigg|\bigg(\frac{(k+1)\xi_{\mp}}{k}\bigg)^{k}\frac{\Theta(d(x))}{\theta(d(x))}
S_{k}(\epsilon_{1},\cdot\cdot\cdot,\epsilon_{N-1})(1\mp\varepsilon)\\
&+\bigg(\Psi\big(\xi^{\mp}(\Theta_{r}^{\mp}(d(x)))^{\frac{k+1}{k}}\big)-C_{f}^{+\infty}\bigg)M_{k-1}^{\pm}\\
&-\bigg(\frac{(k+1)\xi_{\mp}}{k}\bigg)^{k}\bigg(\frac{\theta'(d(x))\Theta(d(x))}{\theta^{2}(d(x))}-(1-D_{\theta})\bigg)M_{k-1}^{\pm}\bigg|<\varepsilon,
\end{split}
\end{equation}
where
\[
\Psi\big(\xi_{\mp}(\Theta_{r}^{\mp}(d(x)))^{\frac{k+1}{k}}\big)=-\frac{\psi''\big(\xi_{\mp}(\Theta_{r}^{\mp}(d(x)))^{\frac{k+1}{k}}\big)\xi_{\mp}(\Theta_{r}^{\mp}(d(x)))^{\frac{k+1}{k}}}{\psi'\big(\xi_{\mp}(\Theta_{r}^{\mp}(d(x)))^{\frac{k+1}{k}}\big)}
\]
and
\begin{equation}\label{3f22}
\Theta_{r}^{\mp}(d(x)):=\Theta(d(x))\mp\Theta(r)>0.
\end{equation}

Take $\sigma\in (0, \delta_{\varepsilon})$ and define
\begin{equation}\label{3f23}
D_{-}^{\sigma}:=\Omega_{2\delta_{\varepsilon}}\setminus\bar{\Omega}_{\sigma},\,\,\,D_{+}^{\sigma}:=\Omega_{2\delta_{\varepsilon}-\sigma}.
\end{equation}
Let
\[
\overline{u}_{\varepsilon}(x)=\psi\big(\xi_{-}(\Theta_{\sigma}^{-}(d(x)))^{\frac{k+1}{k}}\big),\,x\in
D_{-}^{\sigma} \mbox{ and
}\underline{u}_{\varepsilon}(x)=\psi\big(\xi_{+}(\Theta_{\sigma}^{+}(d(x)))^{\frac{k+1}{k}}\big),\,x\in
D_{+}^{\sigma}.
\]
By \eqref{3f18}-\eqref{3f21} and a straightforward calculation, we
have
\[
\begin{split}
&S_{k}(D^{2}\overline{u}_{\varepsilon}(x))-b(x)f(\overline{u}_{\varepsilon}(x))\\
&=\big(-\psi'\big(\xi_{-}(\Theta_{\sigma}^{-}(d(x)))^{\frac{k+1}{k}}\big)\big)^{k}\theta^{k+1}(d(x))\bigg\{\bigg(\frac{(k+1)\xi_{-}}{k}\bigg)^{k}\frac{\Theta_{\sigma}^{-}(d(x))}{\theta(d(x))}\\
&\times
S_{k}(\epsilon_{1},\cdot\cdot\cdot,\epsilon_{N-1})+\bigg(\frac{(k+1)\xi_{-}}{k}\bigg)^{k}\bigg[\frac{k+1}{k}\Psi\big(\xi_{-}(\Theta_{\sigma}^{-}(d(x)))^{\frac{k+1}{k}}\big)
-\frac{1}{k}-\frac{\theta'(d(x))\Theta_{\sigma}^{-}(d(x))}{\theta^{2}(d(x))}\bigg]\\
&\times
S_{k-1}(\epsilon_{1},\cdot\cdot\cdot,\epsilon_{N-1})-\frac{b(x)}{\theta^{k+1}(d(x))}\bigg\}\\
\end{split}
\]
\[
\begin{split}
&\leq
f(\overline{u}_{\varepsilon}(x))\theta^{k+1}(d(x))(1-\varepsilon)^{-1}\bigg[\bigg(\frac{(k+1)\xi_{-}}{k}\bigg)^{k}\frac{\Theta(d(x))}{\theta(d(x))}
S_{k}(\epsilon_{1},\cdot\cdot\cdot,\epsilon_{N-1})(1-\varepsilon)\\
&+\bigg(\frac{(k+1)\xi_{-}}{k}\bigg)^{k}\frac{k+1}{k}\big(\Psi\big(\xi_{-}(\Theta_{\sigma}^{-}(d(x)))^{\frac{k+1}{k}}\big)-C_{f}^{+\infty}\big)M_{k-1}^{+}-\bigg(\frac{(k+1)\xi_{-}}{k}\bigg)^{k}\\
&\times\bigg(\frac{\theta'(d(x))\Theta(d(x))}{\theta^{2}(d(x))}-(1-D_{\theta})\bigg)M_{k-1}^{+}+\bigg(\frac{(k+1)}{k}\bigg)^{k}\xi_{-}^{k}\frac{(k+1)C_{f}^{+\infty}+kD_{\theta}-(k+1)}{k}\\
&-(b_{1}-C_{0}\varepsilon)\bigg]\leq
f(u_{\varepsilon}(x))\theta^{k+1}(d(x))(1-\varepsilon)^{-1}\bigg[\bigg(\frac{(k+1)}{k}\bigg)^{k}\xi_{-}^{k}\frac{(k+1)C_{f}^{+\infty}+kD_{\theta}-(k+1)}{k}\\
&-b_{1}+(1+C_{0})\varepsilon\bigg]\leq0,
\end{split}
\]
i.e., $\overline{u}_{\varepsilon}$ is a supersolution to Eq.
\eqref{M} in $D_{-}^{\sigma}$. Moreover, by \eqref{3f19}, we see
that
\[
\begin{split}
&S_{i}(D^{2}\overline{u}_{\varepsilon}(x))\\
&=\big(-\psi'\big(\xi_{-}(\Theta_{\sigma}^{-}(d(x)))^{\frac{k+1}{k}}\big)\big)^{i}(\Theta_{\sigma}^{-}(d(x)))^{\frac{i-k}{k}}\theta^{i+1}(d(x))\bigg[\bigg(\frac{(k+1)\xi_{-}}{k}\bigg)^{i}\frac{\Theta_{\sigma}^{-}(d(x))}{\theta(d(x))}\\
&\times
S_{i}(\epsilon_{1},\cdot\cdot\cdot,\epsilon_{N-1})\bigg]+\bigg(\frac{(k+1)\xi_{-}}{k}\bigg)^{i}\bigg[\frac{k+1}{k}\Psi\big(\xi_{-}(\Theta_{\sigma}^{-}(d(x)))^{\frac{k+1}{k}}\big)
-\frac{1}{k}-\frac{\theta'(d(x))\Theta_{\sigma}^{-}(d(x))}{\theta^{2}(d(x))}\bigg]\\
&\times
S_{i-1}(\epsilon_{1},\cdot\cdot\cdot,\epsilon_{N-1})>0,\,x\in
D_{-}^{\sigma},\,i=1,\cdot\cdot\cdot, N.
\end{split}
\]
This implies that $\overline{u}_{\varepsilon}$ is strictly convex in
$D_{-}^{\sigma}$.

In a similar way, we can show that $\underline{u}_{\varepsilon}$ is
a strictly convex subsolution to Eq. \eqref{M} in $D_{+}^{\sigma}$.

\noindent \textbf{Case 2.} $\theta$ is non-decreasing on $(0,
\delta_{0})$. Since $\lim_{t\rightarrow0^{+}}\Theta(t)=0$, for
convenience, we define
\[
\Theta_{0}^{\mp}(d(x)):=\Theta(d(x))
\]
in \eqref{3f22}. From Lemma \ref{lemma24} $\mathbf{(ii)}$ and Lemma
\ref{lemma28} $\mathbf{(i)}$, we see that corresponding to
$\varepsilon$, there exists sufficiently small constant
$\delta_{\varepsilon}\in(0, \delta_{*}/2)$ such that for
$x\in\Omega_{2\delta_{\varepsilon}}$, \eqref{3f18}-\eqref{3f21} with
$r=0$ hold here.

Take $\sigma\in (0, \delta_{\varepsilon})$ and define
\[
\overline{u}_{\varepsilon}(x)=\psi\big(\xi_{-}(\Theta(d_{-}(x)))^{\frac{k+1}{k}}\big),\,x\in
D_{-}^{\sigma} \mbox{ and
}\underline{u}_{\varepsilon}(x)=\psi\big(\xi_{+}(\Theta(d_{+}(x)))^{\frac{k+1}{k}}\big),\,x\in
D_{+}^{\sigma},
\]
where $D_{\mp}^{\sigma}$ are defined as shown in \eqref{3f23} and
$d_{-}(x):=d(x)-\sigma,\,d_{+}(x):=d(x)+\sigma$.  A straightforward
calculation shows that
\[
\begin{split}
&S_{k}(D^{2}\overline{u}_{\varepsilon}(x))-b(x)f(\overline{u}_{\varepsilon}(x))\\
&=\big(-\psi'\big(\xi_{-}(\Theta(d_{-}(x)))^{\frac{k+1}{k}}\big)\big)^{k}\theta^{k+1}(d_{-}(x))\bigg\{\bigg(\frac{(k+1)\xi_{-}}{k}\bigg)^{k}\frac{\Theta(d_{-}(x))}{\theta(d_{-}(x))}\\
&\times
S_{k}(\epsilon_{1},\cdot\cdot\cdot,\epsilon_{N-1})+\bigg(\frac{(k+1)\xi_{-}}{k}\bigg)^{k}\bigg[\frac{k+1}{k}\Psi\big(\xi_{-}(\Theta(d_{-}(x)))^{\frac{k+1}{k}}\big)
-\frac{1}{k}-\frac{\theta'(d_{-}(x))\Theta(d_{-}(x))}{\theta^{2}(d_{-}(x))}\bigg]\\
&\times
S_{k-1}(\epsilon_{1},\cdot\cdot\cdot,\epsilon_{N-1})-\frac{b(x)}{\theta^{k+1}(d_{-}(x))}\bigg\}\\
&\leq
f(\overline{u}_{\varepsilon}(x))\theta^{k+1}(d_{-}(x))(1-\varepsilon)^{-1}\bigg[\bigg(\frac{(k+1)\xi_{-}}{k}\bigg)^{k}\frac{\Theta(d_{-}(x))}{\theta(d_{-}(x))}
S_{k}(\epsilon_{1},\cdot\cdot\cdot,\epsilon_{N-1})(1-\varepsilon)\\
\end{split}
\]
\[
\begin{split}
&+\bigg(\frac{(k+1)\xi_{-}}{k}\bigg)^{k}\frac{k+1}{k}\big(\Psi\big(\xi_{-}(\Theta(d_{-}(x)))^{\frac{k+1}{k}}\big)-C_{f}^{+\infty}\big)M_{k-1}^{+}-\bigg(\frac{(k+1)\xi_{-}}{k}\bigg)^{k}\\
&\times\bigg(\frac{\theta'(d_{-}(x))\Theta(d_{-}(x))}{\theta^{2}(d_{-}(x))}-(1-D_{\theta})\bigg)M_{k-1}^{+}+\bigg(\frac{(k+1)}{k}\bigg)^{k}\xi_{-}^{k}\frac{(k+1)C_{f}^{+\infty}+kD_{\theta}-(k+1)}{k}\\
&-(b_{1}-C_{0}\varepsilon)\bigg]\leq
f(\overline{u}_{\varepsilon}(x))\theta^{k+1}(d_{-}(x))(1-\varepsilon)^{-1}\bigg[\bigg(\frac{(k+1)}{k}\bigg)^{k}\xi_{-}^{k}\frac{(k+1)C_{f}^{+\infty}+kD_{\theta}-(k+1)}{k}\\
&-b_{1}+(1+C_{0})\varepsilon\bigg]\leq0,
\end{split}
\]
i.e., $\overline{u}_{\varepsilon}$ is a supersolution to Eq.
\eqref{M} in $D_{-}^{\sigma}$.  By the similar argument as the
above, we can show $\overline{u}_{\varepsilon}$ is strictly convex
in $D_{-}^{\sigma}$.

In a similar way, we can show that $\underline{u}_{\varepsilon}$ is
a strictly convex subsolution to Eq. \eqref{M} in $D_{+}^{\sigma}$.

For Case $1$ and Case 2, let $u$ be an arbitrary $k$-convex solution
to problem \eqref{M}. We assert that there exists a large constant
$M>0$ such that
\begin{equation}\label{3f26}
u(x)\leq \overline{u}_{\varepsilon}(x)+M,\,x\in D_{-}^{\sigma}
\mbox{ and }\underline{u}(x)\leq u(x)+M,\,x\in D_{+}^{\sigma}.
\end{equation}
In fact, we can take a constant $M>0$ independent of $\sigma$ such
that
\begin{equation}\label{3f24}
\begin{split}
&u(x)\leq
\overline{u}_{\varepsilon}(x)+M,\,x\in\{x\in\Omega:d(x)=2\delta_{\varepsilon}\},\\
&\underline{u}_{\varepsilon}(x)\leq u(x)+M,\,x\in\{x\in\Omega:
d(x)=2\delta_{\varepsilon}-\sigma\}.
\end{split}
\end{equation}
Moreover, we also see that
\[
u(x)<\overline{u}_{\varepsilon}(x)=+\infty,\,x\in\{x\in\Omega:d(x)=\sigma\}\mbox{
and
}\underline{u}_{\varepsilon}(x)<u(x)=+\infty,\,x\in\partial\Omega.
\]
This implies that we can take a sufficiently small
$0<\rho<\delta_{\varepsilon}$ such that
\begin{equation}\label{3f25}
\sup_{x\in D_{-}^{\sigma}}u(x)\leq
\overline{u}_{\varepsilon}(x),\,x\in D_{-}^{\sigma}\setminus
\tilde{D}_{-}^{\sigma}\mbox{ and }\sup_{x\in
D^{\sigma}_{+}}\underline{u}_{\varepsilon}(x)\leq
u(x),\,D_{+}^{\sigma}\setminus \tilde{D}_{+}^{\sigma},
\end{equation}
where
\[
\tilde{D}_{-}^{\sigma}=\Omega_{2\delta_{\varepsilon}}\setminus\bar{\Omega}_{(1+\rho)\sigma}
\mbox{ and
}\tilde{D}_{+}^{\sigma}=\Omega_{2\delta_{\varepsilon}-\sigma}\setminus\bar{\Omega}_{\rho}.
\]
By $\mathbf{(S_{1})}$ (or $\mathbf{(S_{01})}$), we note that
$\overline{u}_{\varepsilon}+M$ and $u+M$ are both supersolution in
$\tilde{D}_{-}^{\sigma}$ and $\tilde{D}_{+}^{\sigma}$, respectively.
It follows from \eqref{3f24}-\eqref{3f25} and Lemma \ref{lemma2.1}
that
\[
u(x)\leq \overline{u}_{\varepsilon}(x)+M,\,x\in
\tilde{D}_{-}^{\sigma} \mbox{ and }
\underline{u}_{\varepsilon}(x)\leq u(x)+M,\,x\in
\tilde{D}_{+}^{\sigma}.
\]
This together with \eqref{3f25} implies that \eqref{3f26} holds.
Hence, letting $\sigma\rightarrow0$, we have for $x\in
\Omega_{2\delta_{\varepsilon}}$,
\[
\frac{u(x)}{\psi\big(\xi_{-}(\Theta(d(x)))^{\frac{k+1}{k}}\big)}\leq
1+\frac{M}{\psi\big(\xi_{-}(\Theta(d(x)))^{\frac{k+1}{k}}\big)}
\]
and
\[
\frac{u(x)}{\psi\big(\xi_{+}(\Theta(d(x)))^{\frac{k+1}{k}}\big)}\geq
1+\frac{M}{\psi\big(\xi_{+}(\Theta(d(x)))^{\frac{k+1}{k}}\big)}.
\]
Consequently,  we have
\[
\limsup_{d(x)\rightarrow0}\frac{u(x)}{\psi\big(\xi_{-}(\Theta(d(x)))^{\frac{k+1}{k}}\big)}\leq1
\mbox{ and
}\liminf_{d(x)\rightarrow0}\frac{u(x)}{\psi\big(\xi_{+}(\Theta(d(x)))^{\frac{k+1}{k}}\big)}\geq1.
\]
It follows from Lemma \ref{lemma24} $\mathbf{(ii)}$ that
\[
\limsup_{d(x)\rightarrow0}\frac{u(x)}{\psi\big((\Theta(d(x)))^{\frac{k+1}{k}}\big)}=\limsup_{d(x)\rightarrow0}\frac{u(x)}{\psi\big(\xi_{-}(\Theta(d(x)))^{\frac{k+1}{k}}\big)}\lim_{d(x)\rightarrow0}\frac{\psi\big(\xi_{-}(\Theta(d(x)))^{\frac{k+1}{k}}\big)}{\psi\big((\Theta(d(x)))^{\frac{k+1}{k}}\big)}\leq\xi_{-}^{1-C_{f}^{+\infty}}
\]
and
\[
\liminf_{d(x)\rightarrow0}\frac{u(x)}{\psi\big((\Theta(d(x)))^{\frac{k+1}{k}}\big)}=\liminf_{d(x)\rightarrow0}\frac{u(x)}{\psi\big(\xi_{+}(\Theta(d(x)))^{\frac{k+1}{k}}\big)}\lim_{d(x)\rightarrow0}\frac{\psi\big(\xi_{+}(\Theta(d(x)))^{\frac{k+1}{k}}\big)}{\psi\big((\Theta(d(x)))^{\frac{k+1}{k}}\big)}\geq\xi_{+}^{1-C_{f}^{+\infty}}.
\]
Passing to $\varepsilon\rightarrow0$, we obtain \eqref{3f27}
holds.
\end{proof}
\section*{Acknowledgments} We would like to thank Professor Zhijun Zhang for his encouragement and many suggestions in
this project.

\medskip

\medskip

\end{document}